\newtheorem{thm}{Theorem}[section]
\newtheorem{lemma}[thm]{Lemma}
\newtheorem{prop}[thm]{Proposition}
\newtheorem{cor}[thm]{Corollary}
\newtheorem{defn}[thm]{Definition}
\newtheorem{examp}[thm]{Example}
\newtheorem{rmk}[thm]{Remark}
\numberwithin{equation}{subsection}
\title{Weil-\'etale Cohomology and Special Values of L-functions}
\author{Minh-Hoang Tran}
\begin{document}
\date{}
\maketitle

\begin{abstract}
We construct the Weil-\'etale cohomology and Euler characteristics for a subclass of the class of $\mathbb{Z}$-constructible sheaves on an open subscheme of the spectrum of the ring of integers of a number field. Then we show that the special value of an Artin L-function of toric type at zero is given by the Weil-\'etale Euler characteristic of an appropriate  $\mathbb{Z}$-constructible sheaf up to signs. As applications of our result, we will prove a formula for the special value of the L-function of an algebraic torus at zero which is similar to Ono's Tamagawa Number Formula.
\end{abstract}
%\tableofcontents
\section{Introduction}

Let $K$ be a number field with ring of integers $O_K$ and Galois group $G_K$. Let $S$ be a finite set of places containing the set of archimedian places $S_{\infty}$ of $K$. Let $U=Spec(O_{K,S})$ and $j: Spec(K)\to U$ be the inclusion of the generic point. By a torsion free discrete $G_K$-module of finite type, we mean a torsion free, finitely generated abelian group with a continuous action by $G_K$. There are two main aims of this paper.
\begin{enumerate}
\item The first is to construct the Weil-\'etale cohomology $H^n_W(U,\mathcal{F})$ and Euler characteristic $\chi_U(\mathcal{F})$ for any strongly-$\mathbb{Z}$-constructible sheaf $\mathcal{F}$ on $U$ (see definition $\ref{strong_const}$) with the following properties
\begin{itemize}
\item If $\mathcal{F}$ is constructible then $\chi_U(\mathcal{F})=1$.
\item If $\pi' :V \to U$ is a finite morphism and $\mathcal{F}$ is strongly-$\mathbb{Z}$-constructible on $V$ then $\pi'_{*}\mathcal{F}$ is strongly-$\mathbb{Z}$-constructible on $U$  and $\chi_V(\mathcal{F})=\chi_U(\pi'_{*}\mathcal{F})$. 
\item $\chi_U$ is multiplicative with respect to some special classses of short exact sequences of strongly-$\mathbb{Z}$-constructible sheaves on $U$.
%\item $\chi_U(\mathbb{Z})=\pm \zeta_S^{*}(0)$. 
\end{itemize}
\item The second is to prove the following theorem.
\begin{thm}\label{main}
Let $M$ be a torsion free discrete $G_K$-module of finite type. Suppose $L_S(M,s)$ is the Artin L-function associated with the representation $M\otimes_{\mathbb{Z}}\mathbb{C}$ of $G_K$ modulo the local factors at $S$. Then   $\mathrm{ord}_{s=0}L_S(M,s)= 
\mathrm{rank}_{\mathbb{Z}} Hom_{U}(j_{*}M,\mathbb{G}_m) $ and 
\[ L^{*}_S(M,0)=\pm \chi_U(j_{*}M).\]
\end{thm}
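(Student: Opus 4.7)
The plan is to reduce the theorem to the case $M = \mathbb{Z}$ (i.e.\ the Dedekind zeta function at $s=0$) via Artin induction, applied simultaneously on both sides of the asserted equality. The Euler characteristic $\chi_U(j_*M)$ should transform correctly because constructible contributions are trivial under $\chi_U$ and finite pushforwards are compatible with $\chi$.

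First I would choose a finite Galois extension $L/K$ trivializing the $G_K$-action on $M$ and view $M$ as a torsion-free $\mathbb{Z}[G]$-module for $G=\mathrm{Gal}(L/K)$. Since $M$ is torsion-free, its character is $\mathbb{Z}$-valued, so Artin's induction theorem yields cyclic subgroups $C_i\subseteq G$, integers $n_i$, and an integer $n\ge 1$ with
\[
 n\cdot(M\otimes\mathbb{Q}) \;\cong\; \bigoplus_i \mathbb{Q}[G/C_i]^{\oplus n_i}
\]
as virtual $\mathbb{Q}[G]$-modules. I would lift this identity to a four-term exact sequence of $\mathbb{Z}[G]$-modules whose outer terms are finite (absorbing the denominator in Artin's theorem) and whose middle terms interleave $M^n$ with direct sums of $\mathbb{Z}[G/C_i]$ according to the signs of the $n_i$. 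Applying $j_*$ produces an analogous exact sequence of strongly-$\mathbb{Z}$-constructible sheaves on $U$.

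Using the listed properties of $\chi_U$ — triviality on constructible sheaves to kill the finite outer terms, multiplicativity on the special class of exact sequences to combine the middle terms, and the pushforward identity $\chi_U(\pi_*\mathcal{F})=\chi_V(\mathcal{F})$ applied to the normalization $\pi_i:V_i\to U$ in $L^{C_i}$ (noting $j_*\mathbb{Z}[G/C_i]\cong \pi_{i,*}j'_*\mathbb{Z}$ by Shapiro) — I obtain
\[
 \chi_U(j_*M)^n \;=\; \pm\prod_i \chi_{V_i}(j'_*\mathbb{Z})^{n_i}.
\]
The same induction identity factors the L-function, $L_S(M,s)^n=\prod_i \zeta_{L^{C_i},S_i}(s)^{n_i}$, and by Hom-$\otimes$ adjunction matches the ranks of $\mathrm{Hom}_U(j_*M,\mathbb{G}_m)$ and $\mathrm{Hom}_{V_i}(j'_*\mathbb{Z},\mathbb{G}_m)=\mathcal{O}_{L^{C_i},S_i}^\times$, delivering the order-of-vanishing assertion. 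The statement thus reduces to the base case $M=\mathbb{Z}$, i.e.\ $\zeta^*_{K,S}(0)=\pm\chi_U(\mathbb{Z})$, which I expect to verify directly from the analytic class number formula together with the explicit description of $H^*_W(U,\mathbb{Z})$ from the first part of the paper, where the $S$-units, $S$-class group, and $S$-regulator appear as the determinantal data defining $\chi_U(\mathbb{Z})$.

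The main obstacle I anticipate is twofold: (a) producing a lift of Artin's rational-character identity to a four-term $\mathbb{Z}[G]$-module exact sequence whose image under $j_*$ lies in the special class for which $\chi_U$ is multiplicative, which is a delicate integrality/acyclicity check; and (b) tracking signs, since the relation $\chi_U(j_*M)^n=\pm\prod\chi_{V_i}(\mathbb{Z})^{n_i}$ pins down $\chi_U(j_*M)$ only up to an $n$-th root of unity in $\mathbb{R}^{\times}$, and this intrinsic ambiguity is precisely what the $\pm$ in the conclusion accommodates.
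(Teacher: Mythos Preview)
Your strategy is essentially the paper's: reduce to permutation modules via an integral form of Artin induction, then use the pushforward compatibility $\chi_U(\pi'_*\mathbb{Z})=\chi_V(\mathbb{Z})$, triviality of $\chi_U$ on constructibles, and the restricted multiplicativity to compare $\chi_U(j_*M)^n$ with a product of Dedekind zeta values at $0$. Two remarks on where your outline diverges from what the paper actually does. First, your obstacle~(a) is already handled in the literature: the paper invokes Ono's result \cite[1.5.1]{Ono61}, which produces directly a \emph{three}-term sequence
\[
0 \to M^{n}\oplus \prod_{\mu}(\pi_{\mu})_{*}\mathbb{Z} \to \prod_{\lambda}(\pi_{\lambda})_{*}\mathbb{Z} \to N \to 0
\]
with $N$ finite; note that any rational isogeny between torsion-free lattices is automatically injective, so your proposed four-term sequence collapses on the left anyway. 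Pushing forward by $j_*$ yields a sequence on $U$ whose middle-right term is $\prod_\lambda(\pi'_\lambda)_*\mathbb{Z}$, and it is precisely this shape that fits the paper's weak multiplicativity hypothesis ($H^1_{et}(U,\mathcal{F}_2)=0$ and $H^1(K_v,\mathcal{F}_{2,v})=0$), since $H^1$ of a permutation module vanishes. Second, for the order of vanishing the paper does not go through the induction at all: it argues directly that the non-degeneracy of the regulator pairing for $j_*M$ forces $\mathrm{rank}\,Hom_U(j_*M,\mathbb{G}_m)=\sum_{v\in S}\mathrm{rank}\,H^0(K_v,M)-\mathrm{rank}\,H^0(K,M)$, and then quotes Tate's formula for $\mathrm{ord}_{s=0}L_S(M,s)$. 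Your inductive route would also work, but the direct argument is cleaner and avoids bookkeeping with the $n_i$.
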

\end{enumerate}
  
The fact that $L_S^{*}(M,0)$ is related to the Euler characteristic of $j_{*}M$ was established by Bienenfeld and Lichtenbaum and our proof is based on the techniques they developed in $\cite{Lic75}$ and $\cite{BL}$. However, their Euler characteristic is constructed from the Artin-Verdier cohomology and is different from our Weil-\'etale Euler characteristic. The Weil-\'etale cohomology in this paper is not the same as the one constructed by Lichtenbaum in $\cite{Lic09a}$ but rather is based on his ideas in $\cite{Lic09b}$ and $\cite{Lic14}$ together with some modification to treat the non-totally imaginary number field case. We hope that the results in this paper provide evidence for Lichtenbaum's general philosophy namely : the special values of L-functions are given by the Weil-\'etale Euler characteristics of appropriate sheaves. As applications, we will prove the following theorem which is similar to Ono's Tamagawa number formula $\cite{Ono63}$.
\begin{thm}\label{main_2}
Let $T$ be an algebraic torus defined over a number field $K$ with character group $\hat{T}$.
Let $h_{T,S}$, $R_{T,S}$ and $w_T$ be the $S$-class number, the $S$-regulator and the number of roots of unity of $T$.
Let $\mathbb{III}^1(T)$ be the Tate-Shafarevich group. Then $\mathrm{ord}_{s=0}L_S(\hat{T},s)=\mathrm{rank}_{\mathbb{Z}}T(O_{K,S})$ and
\begin{eqnarray}
L^{*}_S(\hat{T},0)&=&\pm \frac{h_{T,S}R_{T,S}}{w_T}\frac{[\mathbb{III}^1(T)]}{[H^{1}(K,\hat{T})]} \prod_{v\in S}{[H^1(K_v,\hat{T})]}
\prod_{v \notin S}[H^0(\hat{\mathbb{Z}},H^1(I_v,\hat{T}))].
\end{eqnarray}
\end{thm}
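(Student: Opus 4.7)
The plan is to obtain Theorem \ref{main_2} as a direct application of Theorem \ref{main} to the $G_K$-module $M=\hat{T}$, followed by an explicit computation of the resulting Euler characteristic $\chi_U(j_{*}\hat{T})$. The module $\hat{T}$ is torsion-free and finitely generated with continuous $G_K$-action, so Theorem \ref{main} yields $L^{*}_S(\hat{T},0) = \pm \chi_U(j_{*}\hat{T})$ and $\mathrm{ord}_{s=0}L_S(\hat{T},s) = \mathrm{rank}_{\mathbb{Z}}\mathrm{Hom}_U(j_{*}\hat{T},\mathbb{G}_m)$. The order-of-vanishing statement then follows from the Cartier-type identification $T = \underline{\mathrm{Hom}}(\hat{T},\mathbb{G}_m)$, which, combined with the adjunction between $j_{*}$ and $j^{*}$, gives $\mathrm{Hom}_U(j_{*}\hat{T},\mathbb{G}_m) \cong T(O_{K,S})$ and reduces the theorem to identifying $\chi_U(j_{*}\hat{T})$ with the displayed product.

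For the computation of $\chi_U(j_{*}\hat{T})$ I would study the individual Weil-\'etale groups $H^n_W(U,j_{*}\hat{T})$. The $H^0$ part carries $T(O_{K,S})$, whose torsion subgroup (of order $w_T$) and free part (producing the regulator $R_{T,S}$ through the archimedean pairing built into the Weil-\'etale formalism) account for two of the factors. The $H^1$ part is the heart of the argument: via the local-to-global spectral sequence for $j_{*}\hat{T}$ together with Poitou--Tate duality for the global module $\hat{T}$, it should fit into an exact sequence whose principal subquotients are the $S$-Picard group of $T$ (of order $h_{T,S}$), the Tate--Shafarevich group $\mathbb{III}^1(T)$, and the global Galois cohomology $H^1(K,\hat{T})$ in the denominator. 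The local factors arise from two complementary excision mechanisms: for $v\in S$ the corresponding place has been excised from $U$, producing the factor $[H^1(K_v,\hat{T})]$, while for $v\notin S$ the stalk $(j_{*}\hat{T})_v = \hat{T}^{I_v}$ differs from $\hat{T}$ by the correction $[H^0(\hat{\mathbb{Z}},H^1(I_v,\hat{T}))]$ measuring the Frobenius-invariants of the inertia cohomology.

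The principal obstacle is the careful bookkeeping needed to rearrange the alternating product defining $\chi_U(j_{*}\hat{T})$ into the explicit form of the statement. I would exploit the multiplicativity of $\chi_U$ in distinguished exact sequences and the finite-pushforward invariance $\chi_V(\mathcal{F})=\chi_U(\pi'_{*}\mathcal{F})$ to reduce $\hat{T}$, via a Brauer-type virtual decomposition into permutation modules, to the cases $\pi'_{*}\mathbb{Z}$ on finite \'etale covers of $U$ arising from splitting fields of $T$. In the permutation case, $\chi_U$ should already recover the classical Dedekind class-number formula, so the full formula should emerge after measuring the defect of this virtual decomposition through the displayed global and local cohomology terms, in close analogy with Ono's original derivation of his Tamagawa number formula.
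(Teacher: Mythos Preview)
Your opening moves are exactly what the paper does: apply Theorem~\ref{main} to $M=\hat{T}$, then use $\mathrm{Hom}_U(j_{*}\hat{T},\mathbb{G}_m)\cong T(O_{K,S})$ (cited from \cite{Tra16}) for the vanishing order. The divergence is in how $\chi_U(j_{*}\hat{T})$ is evaluated.

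The paper does \emph{not} analyse the individual Weil-\'etale groups or run a Poitou--Tate argument at this stage. It simply feeds $\mathcal{F}=j_{*}\hat{T}$ into the closed-form expression already established in Theorem~\ref{thm_chi1_formula}; since $\hat{T}$ is torsion-free the formula collapses (Corollary~\ref{cor_L_ST}) to
\[
L^{*}_S(\hat{T},0)=\pm\frac{[Ext^1_U(j_{*}\hat{T},\mathbb{G}_m)]\,R_{T,S}}{[\ker\Psi^1(j_{*}\hat{T})]\,w_T}.
\]
The remaining passage to the displayed product is a one-line substitution using an \emph{external} identity from \cite{Tra16}, namely a formula expressing $h_{T,S}$ exactly as the ratio of $[Ext^1_U(j_{*}\hat{T},\mathbb{G}_m)]/[\ker\Psi^1(j_{*}\hat{T})]$ by the product $[\mathbb{III}^1(T)]\prod_{v\in S}[H^1(K_v,T)]\prod_{v\notin S}[H^0(\hat{\mathbb{Z}},H^1(I_v,\hat{T}))]/[H^1(K,\hat{T})]$. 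All of the local terms, $h_{T,S}$, $\mathbb{III}^1(T)$ and $H^1(K,\hat{T})$ enter through that cited identity, not through any further Weil-\'etale computation here.

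Your third paragraph contains a genuine misstep. The Ono/Brauer virtual decomposition into permutation modules, together with multiplicativity and pushforward invariance of $\chi_U$, is precisely the machinery already consumed in proving Theorem~\ref{main} itself (Proposition~\ref{ono_etale} and Theorem~\ref{euler_value_L0}). Invoking it again after applying Theorem~\ref{main} is circular: it recovers $\chi_U(j_{*}\hat{T})=|L^{*}_S(\hat{T},0)|$, not the explicit product you want. If you wish to avoid citing \cite{Tra16}, the viable route is your second-paragraph program---unpacking $Ext^1_U(j_{*}\hat{T},\mathbb{G}_m)$ and $\ker\Psi^1(j_{*}\hat{T})$ via the Leray sequence for $j_{*}$ and local/global duality---which is feasible and is essentially the content of \cite{Tra16}, but it is a separate arithmetic computation, not a consequence of the permutation-module reduction.
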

The structure of the paper is as follows. We construct the Weil-\'etale cohomology and the regulator pairing in section 2. In sections 3, we discuss the main properties of  strongly-$\mathbb{Z}$-constructible sheaves.  In section 4, we construct the Weil-\'etale Euler characteristics and show that they have the properties listed above. In section 5, we prove our main results namely Theorems $\ref{main}$ and $\ref{main_2}$ and illustrate them using the norm tori of quadratic extensions over $\mathbb{Q}$. Finally, we have an appendix containing the results about determinants of exact sequences and orders of torsion subgroups used in this paper. The readers are advised to skim through the appendix before reading section 4.

\medskip

\textbf{Acknowledgment:} This paper is part of my PhD thesis written at Brown University. I would like to thank my advisor Professor Stephen Lichtenbaum for suggesting this problem to me. It would have been impossible for me to complete this project without his guidance and encouragement. Part of this work was written when I was a member of the SFB 1085 Higher Invariant Research Group at University of Regensburg. I would like to thank Professor Guido Kings for his support and my friend Yigeng Zhao for many helpful conversations.
\section{The Weil-\'etale Cohomology of $\mathbb{Z}$-Constructible Sheaves}
\subsection{The Weil-\'etale Complexes}
We fix the following notations for the whole paper. Let $K$ be a number field with ring of integers $O_K$ and Galois group $G_K$. Let $S_{\infty}$ be the set of all archimedean places of $K$ and $S$ be a finite set of places of $K$ containing $S_{\infty}$. Let $U=Spec(O_{K,S})$ and $j: Spec(K)\to U$ be the inclusion of the generic point.
In this section, we define the Weil-\'etale complex for $\mathbb{Z}$-constructible sheaves on $U$ following the ideas of Lichtenbaum $\cite{Lic14}$. First, we recall the definition of $\mathbb{Z}$-constructible sheaves from $\cite[\mbox{page 146}]{Mil06}$.
\begin{defn}\label{defn_construct}
	A sheaf $\mathcal{F}$ on $U$ is $\mathbb{Z}$-constructible if 
		\begin{enumerate}
			\item there exists an open dense subscheme $V$ of $X$ and a finite \'etale covering 
	$ V' \to V$ such that the restriction of $\mathcal{F}$ to $V'$ is a constant sheaf defined by a finitely generated abelian group, 
			\item for any point $p$ outside $V$, the stalk $\mathcal{F}_{\bar{p}}$ is a finitely generated abelian group.
		\end{enumerate}
We say that $\mathcal{F}$ is constructible if in the definition above the restriction of $\mathcal{F}$ to $V'$ is a  constant sheaf defined by a finite abelian group and for any point $p$ outside $V$, the stalk $\mathcal{F}_{\bar{p}}$ is finite.
\end{defn}

\begin{examp}
\begin{enumerate}
\item Any constant sheaf defined by a finitely-generated abelian groups.
\item Let $M$ be a discrete $G_K$-module, then $j_{*}M$ is a $\mathbb{Z}$-constructible sheaf. Furthermore, if $M$ is finite then $j_{*}M$ is constructible.
\end{enumerate}
 \end{examp}
 We need the cohomology with compact support constructed by Milne in \cite[\mbox{page 165}]{Mil06}. Let $\mathcal{F}$ be an \'etale sheaf on $U$. Let $C^{\circ}(\mathcal{F})$ be the canonical $\check{C}$ech complex defined in \cite[\mbox{page 145}]{Mil06} and $C^{\circ}(U,\mathcal{F})$ be $\Gamma(U,C^{\circ}(\mathcal{F}))$ the complex of its global sections. Then under the assumption on  $U$, $C^{\circ}(U,\mathcal{F})\simeq R\Gamma_{et}(U,\mathcal{F})$.
 
For each prime $v$ of $K$, let $\mathcal{F}_v$ be the discrete $G_{K_v}$-module corresponding to the pull-back of $\mathcal{F}$ to $Spec(K_v)$. Let $C^{\circ}(\mathcal{F}_v)$ be the standard inhomogeneous resolution of $\mathcal{F}_v$. If $v$ is a real prime then let $S^{\circ}(\mathcal{F}_v)$ be the  complete standard inhomogeneous resolution of $\mathcal{F}_v$, if not we define $S^{\circ}(\mathcal{F}_v)$ to be $C^{\circ}(\mathcal{F}_v)$.
Then there is a morphism of complexes 
\[ u : C^{\circ}(U,\mathcal{F}) \to \prod_{v\in S} S^{\circ}(\mathcal{F}_v).\]
We write $R\Gamma_c(U,\mathcal{F})$ for the translate $C^{\circ}(u)[-1]$ of the mapping cone of $u$ and $H^n_c(U,\mathcal{F}):= h^n(R\Gamma_c(U,\mathcal{F}))$ is defined as the cohomology with compact support of $\mathcal{F}$. We have the following long exact sequence
\begin{equation}\label{compact_coh}
... \to H^n_c(U,\mathcal{F}) \to H^n_{et}(U,\mathcal{F}) \to \prod_{v\in S_{\infty}} H^n_T(K_v,\mathcal{F}_v) \oplus \prod_{v\in S-S_{\infty}}H^n(K_v,\mathcal{F}_v) \to H^{n+1}_c(U,\mathcal{F})  \to ... 
\end{equation}
where $H^n_T(K_v,\mathcal{F}_v)$ is the Tate cohomology of the finite group $G_{K_v}=G(\mathbb{C}/K_v)$.

\begin{defn}\label{Weil_etale_defn}
Let  $\mathcal{F}$ be a $\mathbb{Z}$-constructible sheaf on $U$, the Weil-\'etale complex is defined as
\[ R\Gamma_W(U,\mathcal{F}) := 
\tau_{\leq 1}R\Gamma_{c}(U,\mathcal{F}) \oplus
 \tau_{\geq 2}R\mathrm{Hom}_{\mathbb{Z}}(R\mathrm{Hom}_{U}(\mathcal{F},\mathbb{G}_m[-1]),\mathbb{Z}[-3]) \]
where $\tau_{\leq n}$ and $\tau_{\geq n}$ are the truncation functors defined in $\cite[\mbox{1.2.7}]{Weibel94}$. This is an object in the derived category of abelian groups. The Weil-\'etale cohomology are defined by 
\[ H^n_W(U,\mathcal{F}):=h^n(R\Gamma_W(U,\mathcal{F})).\]
\end{defn}
For an abelian group $A$, we write $A^D:=Hom_{\mathbb{Z}}(A,\mathbb{Q}/\mathbb{Z})$ and $A^{*}:=Hom_{\mathbb{Z}}(A,\mathbb{Z})$.
\begin{prop}\label{Weil_F}
The Weil-\'etale cohomology of $\mathcal{F}$ satisfy
\begin{equation}
H^n_W(U,\mathcal{F})= 
       \left\{
				\begin{array}{ll}
					H^n_{c}(U,\mathcal{F})&  \mbox{$n\leq 1$}\\
					{Hom}_U(\mathcal{F},\mathbb{G}_m)_{tor}^D & \mbox{$n=3$}  \\
					0 & \mbox{$ n > 3$}.
				\end{array}
			   \right.
\end{equation}
\[ 0 \to Ext^{1}_{U}(\mathcal{F},\mathbb{G}_m)_{tor}^D \to H^{2}_W(U,\mathcal{F}) \to Hom_{U}(\mathcal{F},\mathbb{G}_m)^{*} \to 0. \]
\end{prop}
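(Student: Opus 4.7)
The plan is to read off the proposition directly from the direct-sum definition of $R\Gamma_W(U,\mathcal{F})$. Because of the two truncations the two summands have disjoint cohomological support, so I treat the ranges $n\leq 1$ and $n\geq 2$ independently. For $n\leq 1$ the summand $\tau_{\geq 2}R\mathrm{Hom}_{\mathbb{Z}}(R\mathrm{Hom}_{U}(\mathcal{F},\mathbb{G}_m[-1]),\mathbb{Z}[-3])$ contributes nothing, while $\tau_{\leq 1}R\Gamma_c(U,\mathcal{F})$ has the same $n$-th cohomology as $R\Gamma_c(U,\mathcal{F})$; this gives $H^n_W(U,\mathcal{F})=H^n_c(U,\mathcal{F})$ with no further work.

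For $n\geq 2$ only the second summand contributes and the task becomes a universal-coefficient computation. Set $A:=R\mathrm{Hom}_U(\mathcal{F},\mathbb{G}_m[-1])=R\mathrm{Hom}_U(\mathcal{F},\mathbb{G}_m)[-1]$, so that $h^q(A)=\mathrm{Ext}^{q-1}_U(\mathcal{F},\mathbb{G}_m)$, and $H^n_W(U,\mathcal{F})=h^{n-3}R\mathrm{Hom}_{\mathbb{Z}}(A,\mathbb{Z})$ after absorbing the shift $[-3]$. Because $\mathbb{Z}$ has global dimension one, the hyper-Ext spectral sequence
\begin{equation*}
E_2^{p,q}=\mathrm{Ext}^p_{\mathbb{Z}}(h^{-q}(A),\mathbb{Z})\ \Longrightarrow\ h^{p+q}R\mathrm{Hom}_{\mathbb{Z}}(A,\mathbb{Z})
\end{equation*}
has only the two columns $p=0,1$; it collapses to short exact sequences
\begin{equation*}
0\to \mathrm{Ext}^1_{\mathbb{Z}}(\mathrm{Ext}^{3-n}_U(\mathcal{F},\mathbb{G}_m),\mathbb{Z})\to H^n_W(U,\mathcal{F})\to \mathrm{Hom}_{\mathbb{Z}}(\mathrm{Ext}^{2-n}_U(\mathcal{F},\mathbb{G}_m),\mathbb{Z})\to 0.
\end{equation*}
At $n=2$ this is exactly the displayed sequence of the proposition, after identifying $\mathrm{Hom}_{\mathbb{Z}}(\mathrm{Hom}_U(\mathcal{F},\mathbb{G}_m),\mathbb{Z})$ with $\mathrm{Hom}_U(\mathcal{F},\mathbb{G}_m)^{*}$. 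At $n=3$ the right-hand term vanishes because $\mathrm{Ext}^{-1}_U(\mathcal{F},\mathbb{G}_m)=0$, leaving $H^3_W(U,\mathcal{F})=\mathrm{Ext}^1_{\mathbb{Z}}(\mathrm{Hom}_U(\mathcal{F},\mathbb{G}_m),\mathbb{Z})$. For $n>3$ both outer Ext groups have negative upper index and so vanish, proving $H^n_W(U,\mathcal{F})=0$.

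The last step is the purely algebraic identification of these $\mathbb{Z}$-Ext groups: for any finitely generated abelian group $X$ one has $\mathrm{Hom}_{\mathbb{Z}}(X,\mathbb{Z})=X^{*}$ and, using the injective resolution $0\to\mathbb{Z}\to\mathbb{Q}\to\mathbb{Q}/\mathbb{Z}\to 0$, $\mathrm{Ext}^1_{\mathbb{Z}}(X,\mathbb{Z})=X_{tor}^D$. The only non-formal ingredient is to justify that these identifications may be applied, i.e.\ that $\mathrm{Ext}^i_U(\mathcal{F},\mathbb{G}_m)$ is finitely generated for each $i$ and vanishes for $i$ large, so that $A$ is a bounded complex with finitely generated cohomology and the spectral sequence is honestly convergent. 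This finite-generation statement, which is where the $\mathbb{Z}$-constructibility of $\mathcal{F}$ really enters and which will be the one technical point to confirm, is standard via Artin--Verdier duality for $\mathbb{Z}$-constructible sheaves; once granted, the proposition falls out of the spectral sequence above.
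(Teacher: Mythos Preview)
Your argument is correct and is essentially the same as the paper's: the paper invokes \cite[exercise 3.6.1]{Weibel94}, which is precisely the two-column hyper-Ext spectral sequence (universal coefficient theorem) you spell out, yielding the same short exact sequence $0\to\mathrm{Ext}^{3-n}_U(\mathcal{F},\mathbb{G}_m)_{tor}^D\to H^n_W(U,\mathcal{F})\to\mathrm{Ext}^{2-n}_U(\mathcal{F},\mathbb{G}_m)^{*}\to 0$. Your explicit flagging of the finite-generation input (needed for $\mathrm{Ext}^1_{\mathbb{Z}}(X,\mathbb{Z})\cong X_{tor}^D$) is a point the paper leaves implicit.
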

\begin{proof}
From the definition of $R\Gamma_W(U,\mathcal{F})$, $H^n_W(U,\mathcal{F})\simeq H^n_{c}(U,\mathcal{F})$   for $n\leq 1$.
For $n\geq 2$, from $\cite[\mbox{exercise 3.6.1}]{Weibel94}$, there is an exact sequence 
\[ 0 \to Ext^{3-n}_{U}(\mathcal{F},\mathbb{G}_m)_{tor}^D \to H^{n}_W(U,\mathcal{F}) \to Ext^{2-n}_{U}(\mathcal{F},\mathbb{G}_m)^{*} \to 0 .\]
Hence, $H^{n}_W(U,\mathcal{F})=0$ for $n\geq 4$
and $H^{3}_W(U,\mathcal{F}) \simeq Hom_{U}(\mathcal{F},\mathbb{G}_m)_{tor}^{D}$. 
\end{proof}
\begin{prop}\label{long_exact_Weil}
Suppose we have an exact sequence of $\mathbb{Z}$-constructible sheaves
\begin{equation}\label{short_exact_etale}
 0 \to \mathcal{F}_1 \to \mathcal{F}_2 \to \mathcal{F}_3 \to 0.
\end{equation}
Assume further that $Ext^1_U(\mathcal{F}_i,\mathbb{G}_m)$ is finite for all $i$. 
Then we have a long exact sequence of Weil-\'etale cohomology
\begin{equation}\label{long_Weil_a}
 ...\to H^n_{W}(U,\mathcal{F}_1) \to H^n_{W}(U,\mathcal{F}_2)  \to ... \to H^3_{W}(U,\mathcal{F}_2) \to H^3_{W}(U,\mathcal{F}_3) \to  0.
\end{equation}
\end{prop}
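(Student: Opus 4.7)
My plan is to exploit the direct-sum decomposition $R\Gamma_W(\mathcal{F}) = \tau_{\leq 1}R\Gamma_c(U,\mathcal{F}) \oplus \tau_{\geq 2}\tilde{D}(\mathcal{F})$, where I write $\tilde{D}(\mathcal{F}) := R\mathrm{Hom}_{\mathbb{Z}}(R\mathrm{Hom}_{U}(\mathcal{F},\mathbb{G}_m[-1]),\mathbb{Z}[-3])$ for the second summand before truncation. The short exact sequence (\ref{short_exact_etale}) produces two distinguished triangles in the derived category of abelian groups: the usual compact-support triangle $R\Gamma_c(\mathcal{F}_1) \to R\Gamma_c(\mathcal{F}_2) \to R\Gamma_c(\mathcal{F}_3) \xrightarrow{+1}$, and, after successive application of the contravariant functors $R\mathrm{Hom}_U(-,\mathbb{G}_m[-1])$ and $R\mathrm{Hom}_\mathbb{Z}(-,\mathbb{Z}[-3])$, the triangle $\tilde{D}(\mathcal{F}_1) \to \tilde{D}(\mathcal{F}_2) \to \tilde{D}(\mathcal{F}_3) \xrightarrow{+1}$. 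The strategy is to extract long exact sequences of cohomology from each triangle, identify them with $H^n_W$ in the appropriate degree ranges, and splice them at $n=1 \to 2$.

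For the low-degree half ($n \leq 1$), Proposition \ref{Weil_F} gives $H^n_W = H^n_c$, so the long exact sequence of the $R\Gamma_c$-triangle is already what I need. For the high-degree half ($n \geq 2$), I would extract the long exact sequence of the $\tilde{D}$-triangle and rewrite it in terms of $H^n_W$. A hyperext spectral sequence argument of the form $E_2^{p,q} = \mathrm{Ext}^p_\mathbb{Z}(\mathrm{Ext}^{-q}_U(\mathcal{F},\mathbb{G}_m),\mathbb{Z})$, combined with the finiteness of $\mathrm{Ext}^1_U(\mathcal{F}_i,\mathbb{G}_m)$ (which kills $\mathrm{Hom}_\mathbb{Z}(\mathrm{Ext}^1_U(\mathcal{F}_i,\mathbb{G}_m),\mathbb{Z})$) and the vanishing of $\mathrm{Ext}^{\geq 2}_\mathbb{Z}(-,\mathbb{Z})$, identifies $h^n(\tilde{D}(\mathcal{F}_i))$ with $H^n_W(\mathcal{F}_i)$ for $n \in \{2,3\}$ exactly as in Proposition \ref{Weil_F}, and shows $h^n(\tilde{D}(\mathcal{F}_i)) = 0$ for $n > 3$. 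This yields the high-degree portion of the long exact sequence, terminating in $\to H^3_W(\mathcal{F}_3) \to 0$.

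Splicing the two halves at $H^1_W(\mathcal{F}_3) \to H^2_W(\mathcal{F}_1)$ is where the hypothesis on $\mathrm{Ext}^1$ becomes decisive. I would define the boundary as the composition of the compact-support connecting map $H^1_c(\mathcal{F}_3) \to H^2_c(\mathcal{F}_1)$ with a canonical morphism $H^2_c(\mathcal{F}_1) \to H^2_W(\mathcal{F}_1)$ arising from Artin-Verdier duality followed by the inclusion from Proposition \ref{Weil_F}; under our finiteness assumption, $H^2_c(\mathcal{F}_1)$ is identified with $\mathrm{Ext}^1_U(\mathcal{F}_1,\mathbb{G}_m)^D$, which injects into $H^2_W(\mathcal{F}_1)$. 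The main obstacle is verifying exactness at this splice: one needs a diagram chase compatibly matching the connecting homomorphisms coming from the $R\Gamma_c$-triangle and the $\tilde{D}$-triangle. This compatibility hinges precisely on the finiteness of $\mathrm{Ext}^1_U(\mathcal{F}_i,\mathbb{G}_m)$, which via Artin-Verdier duality is what forces the two boundaries to agree through the identifications of Proposition \ref{Weil_F}; without it, the splice would fail and no single long exact sequence of Weil-\'etale cohomology would exist.
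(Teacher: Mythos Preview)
Your approach is correct and essentially the same as the paper's: both extract long exact sequences from the $R\Gamma_c$-triangle and the $\tilde{D}$-triangle and splice them at degrees $1\to 2$ via the injection $H^2_c \hookrightarrow H^2_W$ coming from Artin--Verdier duality and the finiteness hypothesis. The paper makes your ``diagram chase'' explicit by noting that finiteness of $\mathrm{Ext}^1_U(\mathcal{F}_i,\mathbb{G}_m)$ forces $\mathrm{Hom}_U(\mathcal{F}_1,\mathbb{G}_m)^{*} \to \mathrm{Hom}_U(\mathcal{F}_2,\mathbb{G}_m)^{*}$ to be injective, and then applying the Snake Lemma to the degree-$2$ comparison diagram of Proposition~\ref{Weil_F} to deduce $\ker\bigl(H^2_c(\mathcal{F}_1) \to H^2_c(\mathcal{F}_2)\bigr) \simeq \ker\bigl(H^2_W(\mathcal{F}_1) \to H^2_W(\mathcal{F}_2)\bigr)$.
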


\begin{proof}
As $R\mathrm{Hom}_U(-,\mathbb{G}_m[-1])$ and $R\mathrm{Hom}_{\mathbb{Z}}(-,\mathbb{Z}[-3])$ are exact functors, we have a distinguished triangle 
\begin{multline}\label{long_exact_Weil_seq1}
 R\mathrm{Hom}_{\mathbb{Z}}(R\mathrm{Hom}_U(\mathcal{F}_1,\mathbb{G}_m[-1]),\mathbb{Z}[-3]) \to 
R\mathrm{Hom}_{\mathbb{Z}}(R\mathrm{Hom}_U(\mathcal{F}_2,\mathbb{G}_m[-1]),\mathbb{Z}[-3]) \to \\
\to R\mathrm{Hom}_{\mathbb{Z}}(R\mathrm{Hom}_U(\mathcal{F}_3,\mathbb{G}_m[-1]),\mathbb{Z}[-3]) \to 
R\mathrm{Hom}_{\mathbb{Z}}(R\mathrm{Hom}_U(\mathcal{F}_1,\mathbb{G}_m[-1]),\mathbb{Z}[-3])[1].
\end{multline}
The long exact sequence of cohomology corresponding to ($\ref{long_exact_Weil_seq1}$) yields 
\begin{multline}\label{long_exact_Weil_seq2} 
H^2_W(U,\mathcal{F}_1) \to H^2_W(U,\mathcal{F}_2)  
\to H^2_W(U,\mathcal{F}_3) \to 
H^3_W(U,\mathcal{F}_1) \to H^3_W(U,\mathcal{F}_2) \to H^3_W(U,\mathcal{F}_3) \to 0.
\end{multline}
As $Ext^1_U(\mathcal{F}_i,\mathbb{G}_m)$ is finite,  
$Hom_{U}(\mathcal{F}_1,\mathbb{G}_m)^{*} \to Hom_{U}(\mathcal{F}_2,\mathbb{G}_m)^{*} $ is injective. Moreover, by the Artin-Verdier Duality \cite[\mbox{II.3.1}]{Mil06}
$Ext^1_{U}(\mathcal{F}_i,\mathbb{G}_m)_{tor}^D\simeq H^2_{c}(U,\mathcal{F}_i)$.
Applying the Snake Lemma to 
\[ \xymatrixrowsep{0.2in}
\xymatrix{ 0 \ar[r] & H^2_{c}(U,\mathcal{F}_1)    \ar[d] \ar[r]  & H^2_{W}(U,\mathcal{F}_1) \ar[d] \ar[r]   & Hom_{U}(\mathcal{F}_1,\mathbb{G}_m)^{*}  \ar[d] \ar[r] &  0 \\
  0 \ar[r] & H^2_{c}(U,\mathcal{F}_2)  \ar[r] &   H^2_{W}(U,\mathcal{F}_2) \ar[r]  &  Hom_{U}(\mathcal{F}_2,\mathbb{G}_m)^{*}       \ar[r]    &  0 }\]
  we deduce
\[ \ker(H^2_c(U,\mathcal{F}_1) \to H^2_c(U,\mathcal{F}_2)) \simeq 
\ker(H^2_W(U,\mathcal{F}_1) \to H^2_W(U,\mathcal{F}_2)).\]

Since $H^n_W(U,\mathcal{F})\simeq H^n_c(U,\mathcal{F})$ for $n\leq 1$, combining ($\ref{long_exact_Weil_seq2}$) with the exact sequence of cohomology with compact support corresponding to ($\ref{short_exact_etale}$), we obtain ($\ref{long_Weil_a}$).
\end{proof}

\subsection{The Regulator Pairing}
We want to define a pairing for every \'etale sheaf on $U$ such that it generalizes the construction of the $S$-regulator of a number field when the sheaf is $\mathbb{Z}$. 
For each place $v$ of $K$, let $j_v$ be the map $Spec(K_v) \to U$. To ease notations, we make the following definition.
\begin{defn}\label{defn_FBFDB}
For a sheaf $\mathcal{F}$ on $U$, we define the sheaf $\mathcal{F}_S$ to be
\[ \mathcal{F}_S:=\prod_{v \in S} (j_v)_{*}(j_v)^{*}\mathcal{F}. \]
 
\end{defn} 
There is a natural map $\mathcal{F}\to \mathcal{F}_S$ obtained by taking the direct sum over all $v$ in $S$ of the map $\mathcal{F} \to (j_v)_{*}(j_v)^{*}\mathcal{F}$. If we write $\mathcal{F}_v$ for the discrete $G_{K_v}$-module corresponding to $(j_v)^{*}\mathcal{F}$ then
\[ H^0_{et}(U,\mathcal{F}_S)\simeq \prod_{v\in S}H^0(K_v,\mathcal{F}_v).\]
By the product formula, the following map $\Lambda_K$ is well-defined.
\[ \Lambda_K : \frac{H^0_{et}(U,(\mathbb{G}_m)_S)}{H^0_{et}(U,\mathbb{G}_m)}=\frac{\prod_{v \in S} K_v^{*}}{O_{K,S}^{*}} \to \mathbb{R} \qquad 
\textbf{u}=(u_1,...,u_v) \mapsto \sum_{v \in S} \log|u_v|_v.\] 
%\[ \textbf{u}=(u_1,...,u_v) \mapsto \sum_{v \in S} \log|u_v|_v. \]

\begin{defn}\label{defn_pairing} 
Let $\mathcal{F}$ be an \'etale sheaf on $U$. The regulator pairing for $\mathcal{F}$ 
\begin{equation}\label{eqn_pairing}
 \langle \cdot , \cdot \rangle_{\mathcal{F}} :  \frac{H^0_{et}(U,\mathcal{F}_S)}{H^0_{et}(U,\mathcal{F})} \times Hom_{U}(\mathcal{F},\mathbb{G}_m) \to \mathbb{R} 
\end{equation}
is defined as  follows.
Let $\alpha$ and $\phi$ be elements of ${H^0_{et}(U,\mathcal{F}_S)}/{H^0_{et}(U,\mathcal{F})}$ and $Hom_{U}(\mathcal{F},\mathbb{G}_m)$. By functoriality, $\phi$ induces a map 
\[  \phi_S : \frac{H^0_{et}(U,\mathcal{F}_S)}{H^0_{et}(U,\mathcal{F})} \to  
\frac{H^0_{et}(U,(\mathbb{G}_m)_S)}{H^0_{et}(U,\mathbb{G}_m)} 
= \frac{\prod_{v \in S} K_v^{*}}{O_{K,S}^{*}}. \]
Define 
$ \langle \alpha,\phi\rangle_{\mathcal{F}} := \Lambda_K(\phi_S(\alpha)) = 
\sum_{v \in S} \log|\phi_S(\alpha)_v|_v .$
\end{defn}

\begin{defn}\label{defn_regulator}
Suppose the pairing $\langle,\rangle_{\mathcal{F}}$ is non-degenerate modulo torsion. Choose bases $\{v_i\}$ and $\{u_j\}$ for the torsion free quotient groups of
$H^0_{et}(U,\mathcal{F}_S)/H^0_{et}(U,\mathcal{F})$ and $Hom_{U}(\mathcal{F},\mathbb{G}_m)$ respectively. Define $R(\mathcal{F}):=|\det(\langle v_i,u_j\rangle_{\mathcal{F}})|$ to be {the regulator of $\mathcal{F}$}. This definition does not depend on the choice of bases.
\end{defn}

\begin{examp}\label{ex_pairing}
\begin{enumerate}
\item If $\mathcal{F}$ is constructible then $Hom_{U}(\mathcal{F},\mathbb{G}_m)$ and $H^0_{et}(U,\mathcal{F}_S)$ are finite groups. Thus, the pairing is trivial and $R(\mathcal{F})=1$.
\item Consider the constant sheaf $\mathbb{Z}$ on $U$, the regulator pairing in this case is 
\[ {\left(\prod_{v \in S}\mathbb{Z}\right)}/{\mathbb{Z}} \times O_{K,S}^{*} \to \mathbb{R} 
:  \quad ((n_v)_{v\in S}, u) \mapsto \sum_{v\in S} n_v\log|u|_v.
\]
Let $\{u_1,...u_{[S]-1}\}$ be a $\mathbb{Z}$-basis for $O_{K,S}^{*}/\mu_K$. Let $\{e_1,..e_{[S]-1}\}$ be the standard basis for 
 $\prod_{v \in S}\mathbb{Z}/\mathbb{Z}$. Then $\langle e_v,u_j\rangle=\log|u_j|_{v}$. The determinant of the matrix $(\log|u_j|_{v})$ is the $S$-unit regulator $R_{K,S}$ of the number field $K$. 
Hence $R(\mathbb{Z})=R_{K,S}$.
\item Let $T$ be an algebraic torus over a number field $K$. Then the pairing \eqref{defn_pairing} for $j_{*}\hat{T}$ can be identified with the following paring of the torus $T$ :
\[  \frac{\prod_{v\in S}H^0(K_v,\hat{T})}{H^0(K,\hat{T})} \times T(O_{K,S})
\to \mathbb{R} :\quad ((\chi_v)_{v\in S},x) \mapsto \sum_{v\in S} \log|\chi_v(x)|_v.\]
 In particular, it is non-degenerate modulo torsion. Moreover, the regulator $R({j_{*}\hat{T}})$ is the same as the regulator $R_{T,S}$ defined in $\cite{Ono61}$.
\end{enumerate}
\end{examp}

\section{Strongly-$\mathbb{Z}$-Constructible Sheaves}
\label{chapter_strongly_Z}
 \subsection{Definitions and Examples}
\begin{defn}\label{strong_const}
A $\mathbb{Z}$-constructible sheaf $\mathcal{F}$ on $U=Spec(O_{K,S})$ is called strongly-$\mathbb{Z}$-constructible if it satisfies the following conditions :
\begin{enumerate}
\item The map $ H^0_{et}(U,\mathcal{F}) \to H^0_{et}(U,\mathcal{F}_S) $ has finite kernel.
\item $H^1_{et}(U,\mathcal{F})$ and $Ext^1_{U}(\mathcal{F},\mathbb{G}_m)$ are finite abelian groups.
\item The regulator pairing $(\ref{eqn_pairing})$ is non-degenerate modulo torsion.
\end{enumerate}
\end{defn}

\begin{rmk}\label{Artin_Verdier_strongly}
Our definition is modeled after the definition of quasi-constructible sheaves of Bienenfeld and Lichtenbaum (cf. $\cite[\mbox{section 4}]{BL}$). 
\end{rmk}

\begin{examp}
\begin{enumerate}
\item Constant sheaves defined by finitely generated abelian groups.
\item Let $v$ be a closed point of $U$ and  $i : v \to U$ be the natural map. Let $M$ be a finite $\hat{\mathbb{Z}}$-module. Then $i_{*}M$ is strongly-$\mathbb{Z}$-constructible. A non-example would be $i_{*}\mathbb{Z}$. Indeed, the kernel of 
$H^0_{et}(U,i_{*}\mathbb{Z}) \to H^0_{et}(U,(i_{*}\mathbb{Z})_S)$ is isomorphic to $\mathbb{Z}$. 
\item Constructible sheaves.
\item Let $M$ be a torsion free discrete $G_K$-module of finite type. Then $j_{*}M$ is strongly-$\mathbb{Z}$-constructible (see Proposition $\ref{jM_strong_constr}$).
\end{enumerate}
\end{examp}

The following proposition is a direct consequence of Proposition $\ref{Weil_F}$ and Artin-Verdier duality.
\begin{prop}
Let $\mathcal{F}$ be a strongly-$\mathbb{Z}$-constructible sheaf on $U$. Then 
 \[H^{n}_{W}(U,\mathcal{F}) = 
       \left\{
				\begin{array}{ll}
					H^n_{c}(U,\mathcal{F}) & \mbox{$n\leq 1$} \\
					Hom_U(\mathcal{F},\mathbb{G}_m)_{tor}^D &  \mbox{$n=3$}\\
					0 &  \mbox{$n > 3$.}
				\end{array}
			   \right.
	            \]           
 \[ 0 \to H^2_{c}(U,\mathcal{F}) \to H^2_{W}(U,\mathcal{F}) \to Hom_U(\mathcal{F},\mathbb{G}_m)^{*} \to 0.\]
 In particular, if $\mathcal{F}$ is a constructible sheaf, then
$H^n_{W}(U,\mathcal{F})=H^n_c(U,\mathcal{F})$ for all $n$. 
\end{prop}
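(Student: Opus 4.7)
The plan is to read the result off directly from Proposition \ref{Weil_F} and rewrite one term using Artin-Verdier duality. In degrees $n\le 1$, $n=3$, and $n>3$ the claimed description of $H^n_W(U,\mathcal{F})$ is literally what Proposition \ref{Weil_F} already gives, so no work is required there. The only thing to do is to reinterpret the $n=2$ piece of Proposition \ref{Weil_F}, namely
\[ 0 \to Ext^{1}_{U}(\mathcal{F},\mathbb{G}_m)_{tor}^D \to H^{2}_W(U,\mathcal{F}) \to Hom_{U}(\mathcal{F},\mathbb{G}_m)^{*} \to 0, \]
in terms of compact-support cohomology. For this I would appeal to Artin-Verdier duality \cite[II.3.1]{Mil06} in the form $Ext^{1}_{U}(\mathcal{F},\mathbb{G}_m)_{tor}^D\simeq H^{2}_{c}(U,\mathcal{F})$, exactly the isomorphism already invoked inside the proof of Proposition \ref{long_exact_Weil}; substituting it into the sequence above produces the stated short exact sequence. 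The finiteness condition on $Ext^{1}_{U}(\mathcal{F},\mathbb{G}_m)$ built into Definition \ref{strong_const} is precisely what legitimises this identification.

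For the final clause, suppose $\mathcal{F}$ is constructible. Then Artin-Verdier duality for constructible sheaves gives $Ext^{3-n}_{U}(\mathcal{F},\mathbb{G}_m)\simeq H^n_c(U,\mathcal{F})^D$ with all groups involved finite; in particular $Hom_{U}(\mathcal{F},\mathbb{G}_m)$ is finite, hence torsion. Consequently $Hom_{U}(\mathcal{F},\mathbb{G}_m)^{*}=0$, so the $n=2$ sequence collapses to $H^2_W(U,\mathcal{F})\simeq H^2_c(U,\mathcal{F})$; and $Hom_{U}(\mathcal{F},\mathbb{G}_m)_{tor}^D=Hom_{U}(\mathcal{F},\mathbb{G}_m)^D\simeq H^3_c(U,\mathcal{F})$, giving $H^3_W=H^3_c$. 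The vanishing $H^n_W=H^n_c=0$ for $n>3$ is again immediate from the same duality.

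The argument is essentially a translation exercise, so I do not expect a real obstruction. The only point that merits verification is that the finiteness hypotheses built into strong-$\mathbb{Z}$-constructibility really match what Artin-Verdier duality requires to yield genuine group isomorphisms (rather than only perfect pairings of profinite completions), but Definition \ref{strong_const} was designed with exactly this alignment in mind, so the check is painless.
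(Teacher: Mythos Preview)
Your proposal is correct and follows exactly the approach the paper indicates: the proposition is stated there as ``a direct consequence of Proposition~\ref{Weil_F} and Artin-Verdier duality,'' and you have simply spelled out that deduction, including the identification $Ext^1_U(\mathcal{F},\mathbb{G}_m)_{tor}^D\simeq H^2_c(U,\mathcal{F})$ already used in the proof of Proposition~\ref{long_exact_Weil} and the finiteness of $Hom_U(\mathcal{F},\mathbb{G}_m)$ in the constructible case.
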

\subsection{Main Properties}
We study the main properties of strongly-$\mathbb{Z}$-constructible sheaves in this section. 
\begin{prop}\label{abel_constr}
Suppose we have an exact sequence of \'etale sheaves on $U$
\begin{equation}\label{abel_constr_seq1}
 0 \to \mathcal{F}_1 \to \mathcal{F}_2 \to \mathcal{F}_3 \to 0 
\end{equation}
where $\mathcal{F}_3$ is constructible. Then $\mathcal{F}_1$ is strongly-$\mathbb{Z}$-constructible if and only if 
$\mathcal{F}_2$ is strongly-$\mathbb{Z}$-constructible.
\end{prop}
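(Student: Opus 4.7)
My plan is to check the three defining conditions of a strongly-$\mathbb{Z}$-constructible sheaf one at a time, exploiting the fact that constructibility of $\mathcal{F}_3$ makes \emph{all} relevant cohomology groups of $\mathcal{F}_3$ finite. Before that, I would quickly note that $\mathcal{F}_1$ is $\mathbb{Z}$-constructible iff $\mathcal{F}_2$ is: this follows from Definition $\ref{defn_construct}$, since we can pick a common $V \subset U$ over which all three sheaves become locally constant after a finite \'etale cover, and stalks of finitely generated groups form a Serre subcategory.

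For condition (1), the pullback to $Spec(K_v)$ is exact, so applying $H^0$ gives a commutative diagram with exact rows joining $H^0_{et}(U,\mathcal{F}_i)$ to $H^0_{et}(U,(\mathcal{F}_i)_S) = \prod_{v \in S} H^0(K_v,(\mathcal{F}_i)_v)$. The Snake Lemma produces an exact sequence of kernels $0 \to K_1 \to K_2 \to K_3$ with $K_3 \subseteq H^0_{et}(U,\mathcal{F}_3)$, which is finite for constructible $\mathcal{F}_3$. Hence $K_1$ is finite iff $K_2$ is.

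For condition (2), the long exact \'etale cohomology sequence gives
\[ H^0_{et}(U,\mathcal{F}_3) \to H^1_{et}(U,\mathcal{F}_1) \to H^1_{et}(U,\mathcal{F}_2) \to H^1_{et}(U,\mathcal{F}_3), \]
whose outer terms are finite; so $H^1_{et}(U,\mathcal{F}_1)$ is finite iff $H^1_{et}(U,\mathcal{F}_2)$ is. Identically, the long exact $Ext$ sequence
\[ Ext^1_U(\mathcal{F}_3,\mathbb{G}_m) \to Ext^1_U(\mathcal{F}_2,\mathbb{G}_m) \to Ext^1_U(\mathcal{F}_1,\mathbb{G}_m) \to Ext^2_U(\mathcal{F}_3,\mathbb{G}_m) \]
has finite outer terms by Artin-Verdier duality applied to constructible $\mathcal{F}_3$ (both $Ext$-groups are duals of compact-support cohomology groups of a constructible sheaf, which are finite), so the finiteness of $Ext^1_U(\mathcal{F}_1,\mathbb{G}_m)$ and $Ext^1_U(\mathcal{F}_2,\mathbb{G}_m)$ are equivalent.

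For condition (3), which I expect to be the main obstacle, I want to show the regulator pairings for $\mathcal{F}_1$ and $\mathcal{F}_2$ agree modulo torsion. Since $\mathcal{F}_3$ is constructible, the four groups $H^0_{et}(U,\mathcal{F}_3)$, $H^0_{et}(U,(\mathcal{F}_3)_S)$, $Hom_U(\mathcal{F}_3,\mathbb{G}_m)$, and $Ext^1_U(\mathcal{F}_3,\mathbb{G}_m)$ are all finite. From this and the long exact sequences, the induced maps
\[ \frac{H^0_{et}(U,(\mathcal{F}_1)_S)}{H^0_{et}(U,\mathcal{F}_1)} \longrightarrow \frac{H^0_{et}(U,(\mathcal{F}_2)_S)}{H^0_{et}(U,\mathcal{F}_2)}, \qquad Hom_U(\mathcal{F}_2,\mathbb{G}_m) \longrightarrow Hom_U(\mathcal{F}_1,\mathbb{G}_m) \]
have finite kernel and cokernel, hence are isomorphisms after killing torsion. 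By functoriality in Definition $\ref{defn_pairing}$, if $\alpha_1 \mapsto \alpha_2$ and $\phi_2 \mapsto \phi_1$ under these maps, then $\phi_{1,S}(\alpha_1) = \phi_{2,S}(\alpha_2)$ in $\prod_v K_v^*/O_{K,S}^*$, so $\langle \alpha_1,\phi_1\rangle_{\mathcal{F}_1} = \langle \alpha_2,\phi_2\rangle_{\mathcal{F}_2}$. Thus the two pairings are identified via the above isomorphisms modulo torsion, and one is non-degenerate modulo torsion iff the other is. Combining the three steps finishes the equivalence.
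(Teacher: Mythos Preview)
Your proof is correct and follows essentially the same approach as the paper's: both arguments reduce all three conditions to the observation that the relevant cohomology and $Ext$ groups of $\mathcal{F}_3$ are finite, so the corresponding invariants for $\mathcal{F}_1$ and $\mathcal{F}_2$ agree modulo finite groups, and the regulator pairings are identified after tensoring with $\mathbb{R}$ (equivalently, killing torsion) via the functoriality of Definition~\ref{defn_pairing}. Your treatment is slightly more explicit in places (e.g., invoking the Snake Lemma for condition~(1) and spelling out the compatibility $\langle \alpha_1,\phi_1\rangle_{\mathcal{F}_1}=\langle \alpha_2,\phi_2\rangle_{\mathcal{F}_2}$), but there is no substantive difference in strategy.
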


\begin{proof}
Since $\mathcal{F}_3$ is constructible, by  $\cite[\mbox{page 146}]{Mil06}$,  $\mathcal{F}_1$ is $\mathbb{Z}$-constructible if and only if $\mathcal{F}_2$ is $\mathbb{Z}$-constructible. 

As $H^n_{et}(U,\mathcal{F}_3)$ and $Ext^n_{U}(\mathcal{F}_3,\mathbb{G}_m)$ are finite for all $n$, $H^n_{et}(U,\mathcal{F}_1)$ and $H^n_{et}(U,\mathcal{F}_2)$ differ only by finite groups and so do $Ext^1_{U}(\mathcal{F}_1,\mathbb{G}_m)$ and $Ext^1_{U}(\mathcal{F}_2,\mathbb{G}_m)$. Since $H^0_{et}(U,\mathcal{F}_{3,S})$ and $H^0_{et}(U,\mathcal{F}_3)$ are finite,
$ \left({H^0_{et}(U,\mathcal{F}_{1,S})}/{H^0_{et}(U,\mathcal{F}_1)}\right)_{\mathbb{R}} \simeq 
\left({H^0_{et}(U,\mathcal{F}_{2,S})}/{H^0_{et}(U,\mathcal{F}_2)}\right)_{\mathbb{R}} $.
Hence, conditions 1 and 2 of $\ref{strong_const}$ hold for $\mathcal{F}_1$ if and only if they hold for $\mathcal{F}_2$.

By functoriality, there is a commutative diagram 
\[ \xymatrixrowsep{0.2in}\xymatrix{ \left(\frac{H^0_{et}(U,\mathcal{F}_{1,S})}{H^0_{et}(U,\mathcal{F}_1)}\right)_{\mathbb{R}} \ar[d]^{\simeq} & \times & Hom_{U}(\mathcal{F}_1,\mathbb{G}_m)_{\mathbb{R}}  \ar[r] & \mathbb{R} \ar[d]^{id} \\
\left(\frac{H^0_{et}(U,\mathcal{F}_{2,S})}{H^0_{et}(U,\mathcal{F}_2)}\right)_{\mathbb{R}} & \times & Hom_{U}(\mathcal{F}_2,\mathbb{G}_m)_{\mathbb{R}} \ar[u]^{\simeq} \ar[r] & \mathbb{R} }
\]
As a result, condition 3 of $\ref{strong_const}$ holds for $\mathcal{F}_1$ if and only if it holds for $\mathcal{F}_2$.
\end{proof}
Next we want to show that strongly-$\mathbb{Z}$-constructible sheaves are stable under push-forward by a finite morphism.
Let $L/K$ be a finite Galois extension of number fields. Let $V$ be the normalization of $U=Spec(O_{K,S})$ in $L$. Then $V=Spec(O_{L,S'})$ where $S'$ is the set of places of $L$ lying over a place of $K$ in $S$. Let $\pi : Spec(L) \to Spec(K)$ and $ \pi': V \to U$ be the natural finite morphisms. 
\begin{lemma}\label{Mackey}
Let $\mathcal{F}$ be a sheaf on $V$. For each place $v$ of $K$ and each place $w$ of $L$ lying over $K$, 
let $j_{w} : Spec(L_w) \to V$ and $\pi_{w} : Spec(L_w) \to Spec(K_v)$ be the natural maps. 
Then \[ j_{v}^{*}\pi'_{*}\mathcal{F}\simeq \prod_{w|v}(\pi_w)_{*}j_{w}^{*}\mathcal{F}.\] 
\end{lemma}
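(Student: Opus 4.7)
The plan is to realize both sides as arising from a single Cartesian square and then to invoke proper base change for the finite (hence proper) morphism $\pi'$. First I would verify that the square
\[
\xymatrix{ \coprod_{w|v} Spec(L_w) \ar[r]^-{\coprod j_w} \ar[d]_-{\coprod \pi_w} & V \ar[d]^{\pi'} \\ Spec(K_v) \ar[r]^-{j_v} & U }
\]
is Cartesian. Concretely, $Spec(K_v)\times_U V = Spec(K_v\otimes_{O_{K,S}}O_{L,S'})$, and since $K$ is the localization of $O_{K,S}$ at its generic point one has $K\otimes_{O_{K,S}}O_{L,S'}\simeq L$; tensoring further with $K_v$ over $K$ then gives $K_v\otimes_K L$. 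The classical identity $K_v\otimes_K L\simeq \prod_{w|v} L_w$, valid because $L/K$ is finite separable, identifies the fiber product with $\coprod_{w|v}Spec(L_w)$, and the two projections are visibly $\coprod_w j_w$ and $\coprod_w \pi_w$.

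Second, since $\pi'$ is finite and therefore proper, proper base change applied to this Cartesian square yields
\[
j_v^{*}\pi'_{*}\mathcal{F}\;\simeq\;\bigl(\coprod_w \pi_w\bigr)_{*}\bigl(\coprod_w j_w\bigr)^{*}\mathcal{F}.
\]
Using that the \'etale site of a disjoint union of schemes decomposes as the product of the \'etale sites of its components, the right-hand side equals $\prod_{w|v}(\pi_w)_{*}j_w^{*}\mathcal{F}$, which is exactly the stated formula.

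I expect no serious obstacle: the argument is essentially a base-change computation, and the only substantive input is the decomposition $K_v\otimes_K L\simeq \prod_{w|v}L_w$, which is entirely standard for a finite separable extension of number fields. An alternative route, avoiding the invocation of proper base change, would be to verify the isomorphism directly on stalks: for a geometric point $\bar\eta$ over $Spec(K_v)$, the stalk of $\pi'_{*}\mathcal{F}$ at $\bar\eta$ is the product of the stalks of $\mathcal{F}$ at the geometric points of $V$ lying above $\bar\eta$, and these are indexed, together with their natural $G_{K_v}$-action, precisely by the places $w|v$.
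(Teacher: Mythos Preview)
Your argument is correct and a bit cleaner than the paper's. The paper does not explicitly check that the square is Cartesian; instead it builds the comparison map by adjunction (using $\pi'_{*}(j_w)_{*}=(j_v)_{*}(\pi_w)_{*}$ to land in $Hom_U(\pi'_{*}\mathcal{F},\pi'_{*}\prod_{w|v}(j_w)_{*}j_w^{*}\mathcal{F})$, then taking the adjoint of $\mathcal{F}\to\prod_{w|v}(j_w)_{*}j_w^{*}\mathcal{F}$) and verifies that this map is an isomorphism on the stalk at the unique point $\eta_v$ of $Spec(K_v)$---precisely the alternative you sketch at the end. Your main route, which computes the fiber product $Spec(K_v)\times_U V\simeq\coprod_{w|v}Spec(L_w)$ via $K_v\otimes_K L\simeq\prod_{w|v}L_w$ and then invokes base change, has the advantage of making the geometric content explicit; one small refinement is that the relevant base-change statement here is the one for \emph{finite} morphisms (valid for arbitrary abelian sheaves, e.g.\ via the stalk description $(\pi'_{*}\mathcal{F})_{\bar{x}}\simeq\prod_{\bar{y}\mapsto\bar{x}}\mathcal{F}_{\bar{y}}$), rather than proper base change in its general form, which typically requires torsion coefficients.
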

\begin{proof}
We have the commutative diagram
\[\xymatrixrowsep{0.2in}
\xymatrix @C=1in{
\prod_{w|v}Spec(L_w) \ar[d]^{\prod_{w|v}\pi_w}  \ar[r]^{\prod_{w|v}j_w} &  Spec(O_{L,S'})  \ar[d]^{\pi'}  \\
                     Spec(K_v)   \ar[r]^{j_v} &  Spec(O_{K,S})  }
                     \]
Since  $ \pi'_{*}(j_w)_{*}=(j_v)_{*}(\pi_w)_{*}$ for $w|v$,
\begin{eqnarray*}
Hom_{K_v}(j_{v}^{*}\pi'_{*}\mathcal{F},\prod_{w|v}(\pi_w)_{*}j_{w}^{*}\mathcal{F})
 &\simeq & Hom_{U}(\pi'_{*}\mathcal{F},\prod_{w|v}(j_{v})_{*}(\pi_w)_{*}j_{w}^{*}\mathcal{F}) \\
&\simeq & Hom_{U}(\pi'_{*}\mathcal{F},\pi'_{*}\prod_{w|v}(j_{w})_{*}j_{w}^{*}\mathcal{F}) .
\end{eqnarray*}
Thus, the adjoint map $\mathcal{F} \to \prod_{w|v}(j_{w})_{*}j_{w}^{*}\mathcal{F}$ induces a canonical map $j_{v}^{*}\pi'_{*}\mathcal{F} \to \prod_{w|v}(\pi_w)_{*}j_{w}^{*}\mathcal{F}$. Let $\eta_K=Spec(K)$, $\eta_v=Spec(K_v)$ and similarly for $\eta_L$ and $\eta_w$. Then
\[ (j_{v}^{*}\pi_{*}\mathcal{F})_{\eta_v}=\mathcal{F}_{\eta_L}^{[L:K]}=\left(\prod_{w|v}(\pi_w)_{*}j_{w}^{*}\mathcal{F}\right)_{\eta_v}. \]
Therefore, $j_{v}^{*}\pi'_{*}\mathcal{F}\simeq \prod_{w|v}(\pi_w)_{*}j_{w}^{*}\mathcal{F}$. 
\end{proof}

\begin{lemma}\label{pre_finite_inv}
Let $\mathcal{F}$ be a $\mathbb{Z}$-constructible sheaf on $V$. 
Then the following hold 
\begin{enumerate}
\item The norm map induces a natural isomorphism $Nm: Ext_{V}^{n}(\mathcal{F},\mathbb{G}_m) \to Ext_{U}^{n}(\pi'_{*}\mathcal{F},\mathbb{G}_m)$. 
\item There is a natural isomorphism $H^{n}_{W}(U,\pi'_{*}\mathcal{F}) \simeq H^{n}_{W}(V,\mathcal{F})$.
\item The sheaf $(\pi'_{*}\mathcal{F})_S$ is isomorphic to $\pi'_{*}(\mathcal{F}_S)$. In particular, $ H^0_{et}(U,(\pi'_{*}\mathcal{F})_S) \simeq H^0_{et}(V,\mathcal{F}_S)$.
\end{enumerate}
\end{lemma}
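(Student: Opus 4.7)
The plan is to handle the three parts in the logical order (3), (1), (2): part (3) is a direct application of Lemma \ref{Mackey}, and its output also controls the local terms appearing in $R\Gamma_c$, which is what part (2) will need; part (1) is a Grothendieck-style adjunction argument for the finite morphism $\pi'$; and part (2) then assembles (1) together with a compact-support compatibility via Definition \ref{Weil_etale_defn}.

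For (3), I would unfold $(\pi'_{*}\mathcal{F})_S=\prod_{v\in S}(j_v)_{*}j_v^{*}\pi'_{*}\mathcal{F}$, use Lemma \ref{Mackey} to rewrite $j_v^{*}\pi'_{*}\mathcal{F}$ as $\prod_{w|v}(\pi_w)_{*}j_w^{*}\mathcal{F}$, and then repackage using the commutation $(j_v)_{*}(\pi_w)_{*}=\pi'_{*}(j_w)_{*}$ (both compute the pushforward along $Spec(L_w)\to U$) to recognize the product as $\pi'_{*}\bigl(\prod_{w\in S'}(j_w)_{*}j_w^{*}\mathcal{F}\bigr)=\pi'_{*}(\mathcal{F}_S)$, where $S'$ is the set of places of $L$ above $S$. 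Taking global sections on $U$ and using $\Gamma(U,\pi'_{*}-)=\Gamma(V,-)$, which is valid because $\pi'$ is finite, yields the cohomological consequence. For (1), since $\pi'$ is finite, $\pi'_{*}$ is exact and preserves injective abelian sheaves, and admits a right adjoint $\pi'^{!}$ on the derived category. The key input is the identification $\pi'^{!}\mathbb{G}_{m,U}\simeq \mathbb{G}_{m,V}$ mediated by the norm, which is a standard consequence of Artin-Verdier duality (cf.\ \cite[II.3]{Mil06}). Derived adjunction then gives $R\mathrm{Hom}_{U}(\pi'_{*}\mathcal{F},\mathbb{G}_{m,U})\simeq R\mathrm{Hom}_{V}(\mathcal{F},\mathbb{G}_{m,V})$, and unwinding the adjunction identifies the resulting map on $Ext^n$ with the norm, as desired.

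For (2), I would substitute (1) and (3) into Definition \ref{Weil_etale_defn}. The $\tau_{\geq 2}$-summand transports immediately via (1), since $R\mathrm{Hom}_{U}(\pi'_{*}\mathcal{F},\mathbb{G}_m[-1])\simeq R\mathrm{Hom}_{V}(\mathcal{F},\mathbb{G}_m[-1])$ in the derived category. For the $\tau_{\leq 1}$-summand it suffices to produce an isomorphism $R\Gamma_c(U,\pi'_{*}\mathcal{F})\simeq R\Gamma_c(V,\mathcal{F})$: the étale global-sections piece matches because $\pi'_{*}$ is exact and $\Gamma$-preserving, and the local terms at each $v\in S$ match via (3) together with Shapiro's lemma, giving $H^n(K_v,\prod_{w|v}(\pi_w)_{*}\mathcal{F}_w)\simeq \prod_{w|v}H^n(L_w,\mathcal{F}_w)$ (and likewise for Tate cohomology at real places). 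The main obstacle I expect is checking that the Shapiro isomorphisms are compatible with the localisation map $u$ in the mapping-cone construction of $R\Gamma_c$ from \cite[p.\ 165]{Mil06}, especially at archimedean primes where the complete standard resolution $S^{\circ}$ is used; this comes down to a functoriality verification on the canonical inhomogeneous resolutions, which is tedious but formal.
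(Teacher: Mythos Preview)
Your proposal is correct, and part (3) is exactly the paper's argument. The other two parts are correct but organized slightly differently from the paper.

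For part (1), the paper writes $Nm$ down explicitly as the composite $Ext^n_V(\mathcal{F},\mathbb{G}_{m,V})\xrightarrow{\pi'_{*}}Ext^n_U(\pi'_{*}\mathcal{F},\pi'_{*}\mathbb{G}_{m,V})\xrightarrow{N_{L/K}}Ext^n_U(\pi'_{*}\mathcal{F},\mathbb{G}_{m,U})$ and then cites \cite[II.3.9]{Mil06} for the isomorphism; your adjunction formulation via $\pi'^{!}\mathbb{G}_{m,U}\simeq\mathbb{G}_{m,V}$ is the same content rephrased, and indeed the cited result in Milne is proved that way. For part (2), the paper does not work at the level of the complex $R\Gamma_W$: it argues degree by degree, quoting \cite[II.2.3]{Mil06} for $H^n_c(U,\pi'_{*}\mathcal{F})\simeq H^n_c(V,\mathcal{F})$ to handle $n\leq 1$, using part (1) for $n=3$, and then running a 5-lemma on the short exact sequence of Proposition \ref{Weil_F} for $n=2$. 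Your approach---matching the two truncated summands of Definition \ref{Weil_etale_defn} directly in the derived category---is cleaner and gives a bit more (an identification of $R\Gamma_W$ itself), at the cost of the compatibility check for $u$ at archimedean places that you flagged; the paper sidesteps that check by citing Milne for the compact-support piece and never needing a complex-level statement.
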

\begin{proof}
\begin{enumerate}
\item
We begin by describing the map $Nm$. The norm map $N_{L/K}$ induces a morphism of sheaves $N_{L/K}:\pi'_{*}\mathbb{G}_{m,V} \to \mathbb{G}_{m,U}$. As $\pi'$ is a finite morphism, $\pi'_{*}$ is an exact functor $\cite[\mbox{II.3.6}]{Mil80}$. Therefore, $\pi'_{*}$ induces the map 
$Ext_{V}^{n}(\mathcal{F},\mathbb{G}_{m,V}) \to Ext_{U}^{n}(\pi'_{*}\mathcal{F},\pi'_{*}\mathbb{G}_{m,V})$. We define $Nm$ to be the composition of the following maps
\[  Ext_{V}^{n}(\mathcal{F},\mathbb{G}_{m,V}) \xrightarrow{\pi'_{*}} Ext_{U}^{n}(\pi'_{*}\mathcal{F},\pi'_{*}\mathbb{G}_{m,V})
\xrightarrow{N_{L/K}} Ext_{U}^{n}(\pi'_{*}\mathcal{F},\mathbb{G}_{m,U}). \]
The fact that $Nm$ is an isomorphism is proved in $\cite[\mbox{II.3.9}]{Mil06}$.
\item By \cite[\mbox{II.2.3}]{Mil06}, $H^{n}_{c}(U,\pi'_{*}\mathcal{F}) \simeq H^{n}_{c}(V,\mathcal{F})$. Therefore, $ H^{n}_{W}(U,\pi'_{*}\mathcal{F}) \simeq H^{n}_{W}(V,\mathcal{F}) $ for $n\leq 1$. In addition, by part 1, we have 
\[ H^{3}_{W}(U,\pi'_{*}\mathcal{F}) = Hom_{U}(\pi'_{*}\mathcal{F},\mathbb{G}_m)_{tor}^D 
\simeq Hom_{V}(\mathcal{F},\mathbb{G}_m)_{tor}^D  = H^{3}_{W}(V,\mathcal{F}).
\]
To prove $H^{2}_{W}(V,\mathcal{F}) \simeq H^{2}_{W}(U,\pi'_{*}\mathcal{F})$, we apply the 5-lemma to the following diagram 
\[ \xymatrixrowsep{0.2in}
\xymatrix{ 0 \ar[r] & Ext^1_{U}(\pi'_{*}\mathcal{F},\mathbb{G}_m)_{tor}^{D}    \ar[d]^{\simeq} \ar[r]  & H^2_{W}(U,\pi'_{*}\mathcal{F}) \ar[d] \ar[r]   & Hom_{U}(\pi'_{*}\mathcal{F},\mathbb{G}_m)^{*}  \ar[d]^{\simeq} \ar[r] &  0 \\
  0 \ar[r] & Ext^1_{V}(\mathcal{F},\mathbb{G}_m)_{tor}^{D}  \ar[r] &   H^2_{W}(V,\mathcal{F}) \ar[r]  &  Hom_{V}(\mathcal{F},\mathbb{G}_m)^{*}       \ar[r]    &  0 }\]
\item Recall from Lemma $\ref{Mackey}$ that for $w|v$, we have $ (j_v)_{*}(\pi_w)_{*}=\pi'_{*}(j_w)_{*}$.
In addition, $(j_v)^{*}\pi'_{*}\mathcal{F} \simeq \prod_{w|v}(\pi_w)_{*}(j_w)^{*}\mathcal{F}$. Hence,
\begin{eqnarray*}
(\pi'_{*}\mathcal{F})_S &=& \prod_{v \in S}(j_v)_{*}(j_v)^{*}\pi'_{*}\mathcal{F} 
\quad \simeq \prod_{v \in S}(j_v)_{*}\left(\prod_{w|v}(\pi_w)_{*}(j_w)^{*}\mathcal{F}\right)  \\
&=&  \prod_{w \in S'}(j_v)_{*}(\pi_w)_{*}(j_w)^{*}\mathcal{F}  \quad
= \prod_{w \in S'}\pi'_{*}(j_w)_{*}(j_w)^{*}\mathcal{F} = \pi'_{*}(\mathcal{F}_{S'}).
\end{eqnarray*}
Therefore,  
$ H^0_{et}(U,(\pi'_{*}\mathcal{F})_S) \simeq H^0_{et}(U,\pi'_{*}(\mathcal{F}_{S'}))\simeq H^0_{et}(V,\mathcal{F}_{S'})$.
\end{enumerate}
\end{proof}

\begin{prop}\label{finite_inv}
If $\mathcal{F}$ is strongly-$\mathbb{Z}$-constructible then so is $\pi'_{*}\mathcal{F}$ and $R(\pi'_{*}\mathcal{F})=R(\mathcal{F})$.
\end{prop}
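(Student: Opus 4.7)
The strategy is to verify the three conditions of Definition \ref{strong_const} for $\pi'_{*}\mathcal{F}$ one by one, by identifying each ingredient with the corresponding ingredient for $\mathcal{F}$ on $V$ using Lemma \ref{pre_finite_inv} together with the exactness of $\pi'_{*}$, and then to match the regulator pairings via the local-global compatibility of the norm. That $\pi'_{*}\mathcal{F}$ is $\mathbb{Z}$-constructible is standard: finite pushforward preserves $\mathbb{Z}$-constructibility by \cite[p.~146]{Mil06}.

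For condition 2 of Definition \ref{strong_const}, since $\pi'$ is a finite morphism, $\pi'_{*}$ is exact on \'etale sheaves, so $R\pi'_{*}=\pi'_{*}$ and $H^1_{et}(U,\pi'_{*}\mathcal{F})\simeq H^1_{et}(V,\mathcal{F})$, which is finite by hypothesis. Lemma \ref{pre_finite_inv}(1) gives $Ext^1_U(\pi'_{*}\mathcal{F},\mathbb{G}_m)\simeq Ext^1_V(\mathcal{F},\mathbb{G}_m)$, also finite. For condition 1, Lemma \ref{pre_finite_inv}(3) yields $(\pi'_{*}\mathcal{F})_S\simeq \pi'_{*}(\mathcal{F}_{S'})$, and one checks that the canonical map $\pi'_{*}\mathcal{F}\to(\pi'_{*}\mathcal{F})_S$ is just $\pi'_{*}$ applied to $\mathcal{F}\to\mathcal{F}_{S'}$; after taking $H^0$ and using $H^0_{et}(U,\pi'_{*}\mathcal{F})\simeq H^0_{et}(V,\mathcal{F})$, the kernel we need to bound is identified with the kernel of $H^0_{et}(V,\mathcal{F})\to H^0_{et}(V,\mathcal{F}_{S'})$, which is finite by the strong $\mathbb{Z}$-constructibility of $\mathcal{F}$.

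For condition 3 and the regulator equality, the plan is to transport the pairing $\langle\cdot,\cdot\rangle_{\pi'_{*}\mathcal{F}}$ through the above isomorphisms on the left factor and the norm isomorphism $Hom_U(\pi'_{*}\mathcal{F},\mathbb{G}_m)\simeq Hom_V(\mathcal{F},\mathbb{G}_m)$ from Lemma \ref{pre_finite_inv}(1) on the right factor, and then compare with $\langle\cdot,\cdot\rangle_{\mathcal{F}}$. Given $\phi\in Hom_U(\pi'_{*}\mathcal{F},\mathbb{G}_m)$ corresponding to $\psi\in Hom_V(\mathcal{F},\mathbb{G}_m)$ under $Nm$, the induced map $\phi_S$ factors, place by place and $w\mid v$, as $\prod_{w\mid v} N_{L_w/K_v}$ applied to $\psi_{S'}$. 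Using the standard identity $|N_{L_w/K_v}(x)|_v=|x|_w$ for normalized absolute values, one obtains
\[ \Lambda_K(\phi_S(\alpha))=\sum_{v\in S}\log|\phi_S(\alpha)_v|_v=\sum_{w\in S'}\log|\psi_{S'}(\alpha')_w|_w=\Lambda_L(\psi_{S'}(\alpha')), \]
where $\alpha'$ is the image of $\alpha$ in $H^0_{et}(V,\mathcal{F}_{S'})$. This identifies the two pairings; non-degeneracy modulo torsion for $\pi'_{*}\mathcal{F}$ then follows from that of $\mathcal{F}$, and since the identifications on both factors are isomorphisms of abelian groups, bases of the torsion-free quotients transport to bases, so $R(\pi'_{*}\mathcal{F})=R(\mathcal{F})$. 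The main obstacle is the careful bookkeeping of the compatibility between the sheaf-theoretic norm $Nm$, the local norms $N_{L_w/K_v}$, and the normalized absolute values; once that is nailed down, the remainder is a diagram chase.
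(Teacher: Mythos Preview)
Your proposal is correct and follows essentially the same approach as the paper: reduce conditions 1 and 2 to the corresponding statements on $V$ via Lemma~\ref{pre_finite_inv} and exactness of $\pi'_{*}$, and for condition 3 transport the regulator pairing through the norm isomorphism on the right factor and the identification $H^0_{et}(U,(\pi'_{*}\mathcal{F})_S)\simeq H^0_{et}(V,\mathcal{F}_{S'})$ on the left, with the key computation being $|N_{L_w/K_v}(x)|_v=|x|_w$. The paper organizes the last step as a single commutative diagram (your factorization ``$\phi_S$ is $\prod_{w\mid v}N_{L_w/K_v}$ applied to $\psi_{S'}$'' is exactly the content of that diagram), but the argument is the same.
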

\begin{proof}
As $\pi'_{*}$ preserves $\mathbb{Z}$-constructible sheaves
$\cite[\mbox{page 146}]{Mil06}$, $\pi'_{*}\mathcal{F}$ is $\mathbb{Z}$-constructible. From Lemma $\ref{pre_finite_inv}$, it remains to show that the regulator pairing of $\pi'_{*}\mathcal{F}$ is non-degenerate and $R(\pi'_{*}\mathcal{F})=R(\mathcal{F})$.
They will all follow once we prove the diagram below commutes
\begin{equation}\label{finite_inv_seq}
\xymatrixrowsep{0.2in}  
  \xymatrix{ \left(\frac{H^0_{et}(U,(\pi'_{*}\mathcal{F})_S)}{H^0_{et}(U,\pi'_{*}\mathcal{F})}\right) \ar[d]^{\psi}_{\simeq}  & \times & Hom_{U}(\pi'_{*}\mathcal{F},\mathbb{G}_m)   \ar[r]  & \mathbb{R} \ar[d]^{id}  \\
 \left(\frac{H^0_{et}(V,\mathcal{F}_{S'})}{H^0_{et}(V,\mathcal{F})}\right) & \times & 
 Hom_{V}(\mathcal{F},\mathbb{G}_m) \ar[u]^{Nm}_{\simeq}  \ar[r]  & \mathbb{R} }
\end{equation}
Let $\alpha$ and $\phi$ be elements of $H^0_{et}(U,(\pi'_{*}\mathcal{F})_{S})$ and $Hom_{V}(\mathcal{F},\mathbb{G}_m)$. We need to show
\begin{equation}\label{finite_inv_eq2}
\Lambda_L \circ \phi_S(\psi(\alpha))=\Lambda_K \circ Nm(\phi)_S(\alpha).
\end{equation}
From Lemma $\ref{pre_finite_inv}$ , $Nm(\phi)=N_{L/K}\circ \pi'_{*}\phi$. Let us consider the following diagram
\begin{equation}\label{finite_inv_eq3}
 \xymatrixrowsep{0.2in}\xymatrixcolsep{5pc}\xymatrix {
 &   &    &    \\
\left(\frac{H^0_{et}(U,(\pi'_{*}\mathcal{F})_S)}{H^0_{et}(U,\pi'_{*}\mathcal{F})}\right)_{\mathbb{R}}  
 \ar[r]^-{(\pi'_{*}\phi)_S} \ar[d]^{\psi}   \ar@/^3pc/[rr]^-{Nm(\phi)_S}
 & \left(\frac{H^0_{et}(U,(\pi'_{*}\mathbb{G}_m)_S)}{H^0_{et}(U,\pi'_{*}\mathbb{G}_m)}\right)_{\mathbb{R}}  
 \ar[r]^-{(N_{L/K})_S} \ar[d]^{\psi}
& \left(\frac{H^0_{et}(U,(\mathbb{G}_m)_S)}{H^0_{et}(U,\mathbb{G}_m)}\right)_{\mathbb{R}}  
\ar[d]^-{\Lambda_K} \\
\left(\frac{H^0_{et}(V,\mathcal{F}_{S'})}{H^0_{et}(V,\mathcal{F})}\right)_{\mathbb{R}}
\ar[r]^-{\phi_{S'}}
& \left(\frac{H^0_{et}(V,(\mathbb{G}_m)_{S'})}{H^0_{et}(V,\mathbb{G}_m)}\right)_{\mathbb{R}} 
\ar[r]^-{\Lambda_L}  \ar[ur]^{N_{L/K}}
& \mathbb{R} 
}
\end{equation}
The left square of ($\ref{finite_inv_eq3}$) commutes by functoriality. It is not hard to see the upper triangle in the square on the right hand side is commutative. We shall prove that the lower triangle also commutes. Let $\beta=(\beta_w)_{w\in S'}$ be an element of $H^0_{et}(V,(\mathbb{G}_m)_S) \simeq \prod_{w \in S'} L_w^{*}$. Then 
\begin{eqnarray*}
\Lambda_K(N_{L/K}(\beta)) &=& \sum_{v \in S} \log|N_{L/K}(\beta)_v|_v 
= \sum_{v \in S} \sum_{w|v} \log|N_{L_w/K_v}(\beta_w)|_v  \\
&=& \sum_{v \in S} \sum_{w|v} \log|\beta_w|_w   
= \sum_{w \in S'}  \log|\beta_w|_w   
=  \Lambda_L(\beta).
\end{eqnarray*}
Therefore, diagram ($\ref{finite_inv_eq3}$) is commutative and from this we deduce equation ($\ref{finite_inv_eq2}$). 
As a result, diagram ($\ref{finite_inv_seq}$) commutes. Hence, the proposition is proved.
\end{proof}

\begin{cor}\label{cor_finite_inv}
$\pi'_{*}\mathbb{Z}$ is a strongly-$\mathbb{Z}$-constructible sheaf.
\end{cor}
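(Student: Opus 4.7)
The plan is that this corollary is an immediate consequence of Proposition~\ref{finite_inv} once one verifies that the constant sheaf $\mathbb{Z}$ on $V$ is itself strongly-$\mathbb{Z}$-constructible. So the reasoning splits into two steps: (i) check the three conditions of Definition~\ref{strong_const} for $\mathbb{Z}$ on $V$, and (ii) apply the stability of strong-$\mathbb{Z}$-constructibility under $\pi'_*$.

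For step (i), I would check the conditions one by one. Since $V=\mathrm{Spec}(O_{L,S'})$ is connected, $H^0_{et}(V,\mathbb{Z})=\mathbb{Z}$, and the natural map into $H^0_{et}(V,\mathbb{Z}_{S'})=\prod_{w\in S'}\mathbb{Z}$ is the diagonal, hence injective, so condition 1 holds trivially. For condition 2, we have $H^1_{et}(V,\mathbb{Z})=\mathrm{Hom}_{cts}(\pi_1(V),\mathbb{Z})=0$ because $\pi_1(V)$ is profinite, while $\mathrm{Ext}^1_V(\mathbb{Z},\mathbb{G}_m)=H^1_{et}(V,\mathbb{G}_m)=\mathrm{Pic}(O_{L,S'})$ is the $S'$-class group of $L$, which is finite. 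For condition 3, Example~\ref{ex_pairing}(2) identifies the regulator pairing for $\mathbb{Z}$ on $V$ with the classical $S'$-unit pairing of $L$, whose non-degeneracy modulo torsion is exactly Dirichlet's $S$-unit theorem for $L$.

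For step (ii), Proposition~\ref{finite_inv} then directly yields that $\pi'_*\mathbb{Z}$ is strongly-$\mathbb{Z}$-constructible on $U$, and as a bonus one even obtains $R(\pi'_*\mathbb{Z})=R(\mathbb{Z}_V)=R_{L,S'}$. There is no real obstacle here; the content of the corollary is simply packaging Proposition~\ref{finite_inv} with the base case of the constant sheaf, and the only substantive input invoked is Dirichlet's unit theorem via Example~\ref{ex_pairing}(2).
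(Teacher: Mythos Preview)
Your proposal is correct and follows exactly the approach implicit in the paper: the corollary is deduced from Proposition~\ref{finite_inv} applied to the constant sheaf $\mathbb{Z}$ on $V$, which the paper has already listed (without justification) as an example of a strongly-$\mathbb{Z}$-constructible sheaf. You supply more detail than the paper does by explicitly verifying the three conditions of Definition~\ref{strong_const} for $\mathbb{Z}$ on $V$, but the underlying argument is the same.
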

\begin{prop}\label{ono_etale}
Let $M$ be a torsion free discrete $G_K$-module of finite type. Then there exist finitely many Galois extensions $\{K_{\mu}\}_{\mu}$, $\{K_{\lambda}\}_{\lambda}$ of $K$, a positive integer $n$ and a finite $G_K$-module $N$ such that 
if $\pi_{\mu}: Spec(K_{\mu})\to 
Spec(K)$ and $\pi_{\lambda}: Spec(K_{\lambda})\to 
Spec(K)$ are the natural maps then
we have the following exact sequence 
\begin{equation}\label{ono_etale_seq1}
0 \to M^{n}\oplus \prod_{\mu}(\pi_{\mu})_{*}\mathbb{Z} \to \prod_{\lambda}(\pi_{\lambda})_{*}\mathbb{Z} \to N \to 0.
\end{equation}
Furthermore, let $U=Spec(O_{K,S})$ where $S$ is a finite set of places of $K$ containing $S_{\infty}$. Let $V_{\mu}$ and $V_{\lambda}$ be the normalization of $U$ in $Spec(K_{\mu})$ and $Spec(K_{\lambda})$. Let  $\pi_{\mu}': V_{\mu} \to U$ and $\pi'_{\lambda}: V_{\lambda}\to 
U$ be the natural maps. Then there exists a constructible sheaf $\mathcal{Q}$ such that
\begin{equation}\label{ono_etale_seq2}
0 \to (j_{*}M)^{n}\oplus \prod_{\mu}(\pi_{\mu}')_{*}\mathbb{Z} \to \prod_{\lambda}(\pi_{\lambda}')_{*}\mathbb{Z} \to \mathcal{Q} \to 0.
\end{equation}
\end{prop}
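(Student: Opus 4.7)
The plan is to combine Ono's lemma (an integral refinement of Artin's induction theorem) with push-forward by $j$.

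Since $M$ has continuous $G_K$-action, it factors through some finite Galois quotient $G=\mathrm{Gal}(L/K)$, making $M$ a torsion-free $\mathbb{Z}[G]$-lattice. Artin's induction theorem applied to the rational representation $M\otimes\mathbb{Q}$ yields an integral relation
\[ n\cdot [M\otimes\mathbb{Q}] \;=\; \sum_\lambda [\mathbb{Q}[G/H_\lambda]] \;-\; \sum_\mu [\mathbb{Q}[G/H_\mu]] \]
in the rational representation ring, with $H_\mu, H_\lambda$ ranging over (cyclic) subgroups of $G$. Clearing signs gives an isomorphism of $\mathbb{Q}[G]$-modules
\[ M^n\otimes\mathbb{Q} \;\oplus\; \bigoplus_\mu \mathbb{Q}[G/H_\mu] \;\cong\; \bigoplus_\lambda \mathbb{Q}[G/H_\lambda]. \]
Fix one such isomorphism and scale it by a positive integer so that it restricts to a $\mathbb{Z}[G]$-injection from the natural integral lattice on the left into the one on the right; its cokernel $N$ is necessarily finite because the rational map is an isomorphism. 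Setting $K_\mu=L^{H_\mu}$ and $K_\lambda=L^{H_\lambda}$, the module $\mathbb{Z}[G/H]=\mathrm{Ind}_{G_{K_H}}^{G_K}\mathbb{Z}$ corresponds to the étale sheaf $(\pi_H)_{*}\mathbb{Z}$ on $Spec(K)$, which establishes (\ref{ono_etale_seq1}).

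For (\ref{ono_etale_seq2}), apply the left exact functor $j_{*}$ to (\ref{ono_etale_seq1}). The identity $j\circ\pi_\mu=\pi_\mu'\circ j_\mu^{\mathrm{gen}}$, where $j_\mu^{\mathrm{gen}}:Spec(K_\mu)\to V_\mu$ denotes the inclusion of the generic point, combined with the observation $(j_\mu^{\mathrm{gen}})_{*}\mathbb{Z}=\mathbb{Z}$ on $V_\mu$ (the constant sheaf has trivial Galois action, so the inertia invariants at every closed point equal $\mathbb{Z}$), gives $j_{*}(\pi_\mu)_{*}\mathbb{Z}=(\pi_\mu')_{*}\mathbb{Z}$, and similarly for $K_\lambda$. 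Thus $j_{*}$ produces an injection
\[ 0 \to (j_{*}M)^n \oplus \bigoplus_\mu (\pi_\mu')_{*}\mathbb{Z} \to \bigoplus_\lambda (\pi_\lambda')_{*}\mathbb{Z} \]
on $U$; denote its cokernel by $\mathcal{Q}$. By the long exact sequence for $j_{*}$, $\mathcal{Q}$ embeds into $j_{*}N$. Because $N$ is finite, $j_{*}N$ is constructible, so $\mathcal{Q}$ is a $\mathbb{Z}$-constructible subsheaf of a sheaf with finite stalks; hence $\mathcal{Q}$ itself has finite stalks and is constructible. This proves (\ref{ono_etale_seq2}).

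The main technical subtlety is ensuring that the subfields $K_\mu,K_\lambda$ in (\ref{ono_etale_seq1}) can be taken Galois over $K$, since the fixed fields of subgroups produced by Artin's theorem need not be normal in $L/K$. One repairs this by replacing each non-normal $H\leq G$ with its normal core $\mathrm{core}_G(H)$: the map $\mathbb{Z}[G/H]\hookrightarrow\mathbb{Z}[G/\mathrm{core}_G(H)]$ sending $gH$ to the sum of the $\mathrm{core}_G(H)$-cosets it contains is injective with finite cokernel, and the resulting finite errors can be absorbed into $N$ via a short diagram chase, preserving the shape of the sequence.
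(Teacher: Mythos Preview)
Your core argument is correct and matches the paper: the paper simply cites \cite[1.5.1]{Ono61} for sequence (\ref{ono_etale_seq1}) where you sketch Ono's Artin-induction proof, and your derivation of (\ref{ono_etale_seq2}) via $j_*$ and the identification $j_*(\pi_\mu)_*\mathbb{Z}=(\pi_\mu')_*\mathbb{Z}$ is exactly what the paper does.

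The final paragraph, however, contains a genuine error. Your proposed map $\mathbb{Z}[G/H]\hookrightarrow\mathbb{Z}[G/\mathrm{core}_G(H)]$ is indeed an injective $G$-map, but its cokernel is \emph{not} finite: the source has $\mathbb{Z}$-rank $[G:H]$ while the target has rank $[G:\mathrm{core}_G(H)]$, and these differ precisely when $H$ is non-normal. So the finite error cannot be absorbed into $N$ as you claim, and the repair fails. You have, in fact, put your finger on a real gap: Ono's theorem as usually stated produces finite separable extensions $K_\mu,K_\lambda$ (fixed fields of cyclic subgroups), not Galois ones, and the paper's proof does not address this either. The clean resolution is not to force Galois-ness in (\ref{ono_etale_seq1}) but to observe that the downstream results the paper needs (Lemma~\ref{Mackey}, Propositions~\ref{finite_inv} and~\ref{euler_finite_inv}) hold for any finite morphism $\pi':V\to U$ arising from a finite separable $L/K$; their proofs use only that $\pi'_*$ is exact and the norm isomorphism $Ext^n_V(\mathcal{F},\mathbb{G}_m)\simeq Ext^n_U(\pi'_*\mathcal{F},\mathbb{G}_m)$ from \cite[II.3.9]{Mil06}, neither of which requires $L/K$ Galois. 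So the word ``Galois'' in the statement should simply be relaxed to ``finite,'' and then your argument (minus the last paragraph) is complete.
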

\begin{proof}
The existence of ($\ref{ono_etale_seq1}$) is precisely $\cite[\mbox{1.5.1}]{Ono61}$.  
Let $P_1=\prod_{\mu}(\pi_{\mu})_{*}\mathbb{Z}$ and $P_2=\prod_{\lambda}(\pi_{\lambda})_{*}\mathbb{Z}$. Then  $j_{*}P_1=\prod_{\mu}(\pi_{\mu}')_{*}\mathbb{Z}$ and $j_{*}P_2=\prod_{\lambda}(\pi_{\lambda}')_{*}\mathbb{Z}$.
By applying $j_{*}$ to ($\ref{ono_etale_seq1}$), we obtain the exact sequence 
\[ 0 \to j_{*}M^n\oplus \prod_{\mu}(\pi_{\mu}')_{*}\mathbb{Z} \to \prod_{\lambda}(\pi_{\lambda}')_{*}\mathbb{Z} \to \mathcal{Q} \to 0\]
where $\mathcal{Q}$ is a subsheaf of $j_{*}N$. As $N$ is finite, $j_{*}N$ is constructible and so is $\mathcal{Q}$. 
\end{proof}
\begin{prop}\label{jM_strong_constr}
Let $M$ be a torsion free discrete $G_K$-module of finite type. Then $j_{*}M$ is a strongly-$\mathbb{Z}$-constructible sheaf.
\end{prop}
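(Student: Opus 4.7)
The strategy is to reduce the claim to the already-established strong-$\mathbb{Z}$-constructibility of $\pi'_{*}\mathbb{Z}$ (Corollary \ref{cor_finite_inv}) by exploiting the explicit resolution furnished by Proposition \ref{ono_etale}. Concretely, I would apply Proposition \ref{ono_etale} to $M$ to obtain an exact sequence
\[ 0 \to (j_{*}M)^{n} \oplus \prod_{\mu}(\pi'_{\mu})_{*}\mathbb{Z} \to \prod_{\lambda}(\pi'_{\lambda})_{*}\mathbb{Z} \to \mathcal{Q} \to 0 \]
with $\mathcal{Q}$ constructible. By Corollary \ref{cor_finite_inv} each factor $(\pi'_{\lambda})_{*}\mathbb{Z}$ is strongly-$\mathbb{Z}$-constructible, and each clause of Definition \ref{strong_const} is obviously stable under finite direct products (the functors $(\cdot)_S$, $H^{*}_{et}(U,-)$, $\mathrm{Ext}^{*}_U(-,\mathbb{G}_m)$, and $\mathrm{Hom}_U(-,\mathbb{G}_m)$ all commute with finite direct sums, and the regulator pairing of a direct sum is block-diagonal). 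Hence the middle term of the sequence is strongly-$\mathbb{Z}$-constructible, and Proposition \ref{abel_constr} then implies that the kernel $(j_{*}M)^{n} \oplus \prod_{\mu}(\pi'_{\mu})_{*}\mathbb{Z}$ is also strongly-$\mathbb{Z}$-constructible.

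The remaining step is to pass from this direct sum down to the summand $j_{*}M$. I would verify that strong-$\mathbb{Z}$-constructibility passes to direct summands by a direct check of each clause of Definition \ref{strong_const}. Conditions (1) and (2) follow immediately from the compatibility of the relevant functors with finite direct sums. For condition (3), the regulator pairing of $\mathcal{F}_1 \oplus \mathcal{F}_2$ decomposes as a block-diagonal pairing relative to the decompositions $H^0_{et}(U,(\mathcal{F}_1 \oplus \mathcal{F}_2)_S)/H^0_{et}(U,\mathcal{F}_1 \oplus \mathcal{F}_2) \simeq \bigoplus_i H^0_{et}(U,(\mathcal{F}_i)_S)/H^0_{et}(U,\mathcal{F}_i)$ and $\mathrm{Hom}_U(\mathcal{F}_1 \oplus \mathcal{F}_2,\mathbb{G}_m) \simeq \bigoplus_i \mathrm{Hom}_U(\mathcal{F}_i,\mathbb{G}_m)$, so non-degeneracy modulo torsion on the sum forces non-degeneracy modulo torsion on each summand. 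Applying this observation twice, once to separate $(j_{*}M)^{n}$ from $\prod_{\mu}(\pi'_{\mu})_{*}\mathbb{Z}$, and then to descend from $(j_{*}M)^{n}$ to $j_{*}M$, finishes the proof.

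There is no serious obstacle here: the substantive input is the availability of the Ono-type resolution (Proposition \ref{ono_etale}) and the fact that $(\pi')_{*}\mathbb{Z}$ is already known to be strongly-$\mathbb{Z}$-constructible, together with the stability statement of Proposition \ref{abel_constr}. The only place demanding any care is verifying that the regulator pairing is block-diagonal under direct sums, but this is formal once one unwinds Definition \ref{defn_pairing}: the pairing is additive in both arguments and the componentwise action of $\phi \in \mathrm{Hom}_U(\mathcal{F}_i,\mathbb{G}_m) \subset \mathrm{Hom}_U(\mathcal{F}_1 \oplus \mathcal{F}_2,\mathbb{G}_m)$ vanishes on the other summand.
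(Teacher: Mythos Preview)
Your proof is correct and follows essentially the same route as the paper: invoke the Ono-type resolution (Proposition \ref{ono_etale}), use Corollary \ref{cor_finite_inv} and Proposition \ref{abel_constr} to conclude that $(j_{*}M)^{n}\oplus\prod_{\mu}(\pi'_{\mu})_{*}\mathbb{Z}$ is strongly-$\mathbb{Z}$-constructible, and then descend to the summand $j_{*}M$. The paper's version compresses the last step into a single sentence, whereas you spell out the block-diagonal structure of the regulator pairing under direct sums; this extra care is justified but not a different idea.
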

\begin{proof}
Consider sequence ($\ref{ono_etale_seq2}$) of Proposition $\ref{ono_etale}$.
As $\mathcal{Q}$ is constructible and $\prod_{\lambda}(\pi_{\lambda}')_{*}\mathbb{Z}$ is strongly-$\mathbb{Z}$-constructible, $(j_{*}M)^{n}\oplus \prod_{\mu}(\pi_{\mu}')_{*}\mathbb{Z}$ is strongly-$\mathbb{Z}$-constructible by Proposition $\ref{abel_constr}$. Since $\prod_{\mu}(\pi_{\mu}')_{*}\mathbb{Z}$ is strongly-$\mathbb{Z}$-constructible, so is  $j_{*}M$.
\end{proof}

\section{Euler Characteristics of Strongly-$\mathbb{Z}$-Constructible Sheaves}
\label{chapter_euler_strongly_Z}

\subsection{Construction} 
Let $\mathcal{F}$ be an \'etale sheaf on $U$. We use the notations of section 2. 
Composing the natural maps 
\[ R\Gamma_{W}(U,\mathcal{F}) \to \tau_{\leq 1} R\Gamma_{c}(U,\mathcal{F}) \to R\Gamma_{c}(U,\mathcal{F}) \quad \& \quad  R\Gamma_{c}(U,\mathcal{F}) \to C^{\circ}(U,\mathcal{F}) \to \prod_{v\in S_{\infty}}S^{\circ}(\mathcal{F}_v) \]
yields the map
\begin{equation}\label{defn_cF}
R\Gamma_W(U,\mathcal{F}) \xrightarrow{} 
\prod_{v\in S_{\infty}}S^{\circ}(\mathcal{F}_v).
\end{equation}
\begin{defn}\label{defn_dF}
We define the complex $D_{\mathcal{F}}$ by the translate of the mapping cone of \eqref{defn_cF}
\[ D_{\mathcal{F}} := 
\left[R\Gamma_W(U,\mathcal{F}) \xrightarrow{} 
\prod_{v\in S_{\infty}}S^{\circ}(\mathcal{F}_v) \right][-1]. \]
\end{defn}
 
 \begin{prop}\label{Hn_dF}
Let $\mathcal{F}$ be an \'etale sheaf on $U$. Then $H^n(D_\mathcal{F})$ satisfy 
\begin{multline}\label{Hn_dF_seq1}
0 \to H^0(D_\mathcal{F}) \to H^0_{c}(U,\mathcal{F}) \to \prod_{v\in S_{\infty}}H^0(K_v,\mathcal{F}_v)  \to H^1(D_\mathcal{F}) \to H^1_{c}(U,\mathcal{F}) \to \\ 
\to \prod_{v\in S_{\infty}}H^1(K_v,\mathcal{F}_v)  \to H^2(D_\mathcal{F})  
\to H^2_{W}(U,\mathcal{F}) 
\to \prod_{v\in S_{\infty}}H^2(K_v,\mathcal{F}_v) \to \\ \to H^3(D_\mathcal{F})
\to Hom_U(\mathcal{F},\mathbb{G}_m)_{tor}^D  
\to \prod_{v\in S_{\infty}}H^3(K_v,\mathcal{F}_v) \to H^4(D_\mathcal{F})
 \to 0.
\end{multline}
Moreover, $H^{n}(D_\mathcal{F})\simeq \prod_{v\in S_{\infty}}H^{n-1}_T(K_v,\mathcal{F}_v)$ for $n\notin \{0,1,..,4\}$. 
 \end{prop}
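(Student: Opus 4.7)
The plan is to apply the long exact sequence of cohomology to the distinguished triangle that defines $D_{\mathcal{F}}$. By construction,
$$D_{\mathcal{F}} \to R\Gamma_W(U,\mathcal{F}) \to \prod_{v\in S_{\infty}} S^{\circ}(\mathcal{F}_v) \to D_{\mathcal{F}}[1]$$
is a distinguished triangle in the derived category of abelian groups, so taking cohomology produces a long exact sequence whose middle terms are $H^n_W(U,\mathcal{F})$ and $\prod_{v\in S_{\infty}} H^n(S^{\circ}(\mathcal{F}_v))$. Because $S^{\circ}(\mathcal{F}_v)$ is by definition the complete standard inhomogeneous resolution at real $v$ and the ordinary one at complex $v$, the second factor computes $\prod_{v\in S_{\infty}} H^n_T(K_v,\mathcal{F}_v)$ (at complex $v$ both conventions agree in every degree in which either is non-zero).

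To finish, I substitute for $H^n_W$ using Proposition~\ref{Weil_F}: $H^n_W(U,\mathcal{F}) = H^n_c(U,\mathcal{F})$ for $n \leq 1$; it fits in the stated short exact sequence for $n = 2$; equals $Hom_U(\mathcal{F},\mathbb{G}_m)_{tor}^D$ for $n = 3$; and vanishes for $n > 3$. Plugging these values into the general long exact sequence for $n \in \{0,1,\ldots,4\}$ gives exactly~(\ref{Hn_dF_seq1}), and the sequence naturally terminates with $H^4(D_{\mathcal{F}}) \to 0$ because $H^4_W(U,\mathcal{F}) = 0$. For $n \notin \{0,1,\ldots,4\}$, the vanishings $H^n_W = 0 = H^{n-1}_W$ in the relevant range, combined with the fact that the defining map $R\Gamma_W(U,\mathcal{F}) \to \prod_{v\in S_{\infty}} S^{\circ}(\mathcal{F}_v)$ factors through $C^{\circ}(U,\mathcal{F})$ and hence in cohomology through $H^n_{et}(U,\mathcal{F})$ (which is zero in negative degrees), reduce the long exact sequence to the claimed isomorphism $H^n(D_\mathcal{F}) \simeq \prod_{v\in S_{\infty}} H^{n-1}_T(K_v,\mathcal{F}_v)$.

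The proof is essentially formal. The only place where some care is needed is verifying the boundary behaviour displayed in~(\ref{Hn_dF_seq1}): at the low end, that the sequence starts with an injection $0 \to H^0(D_{\mathcal{F}}) \to H^0_c(U,\mathcal{F})$, and at the high end, that it ends with a surjection $\prod_{v\in S_{\infty}} H^3(K_v,\mathcal{F}_v) \to H^4(D_{\mathcal{F}}) \to 0$. Both of these reduce to vanishing of either $H^n_W(U,\mathcal{F})$ or the archimedean Tate cohomology in degrees outside the specified range, together with the factorisation through $H^n_{et}(U,\mathcal{F})$ mentioned above, so no genuinely new input is required beyond Proposition~\ref{Weil_F} and the definition of $R\Gamma_c$.
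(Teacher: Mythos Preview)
Your proof is correct and takes essentially the same approach as the paper: both derive the long exact sequence from the distinguished triangle $D_{\mathcal{F}} \to R\Gamma_W(U,\mathcal{F}) \to \prod_{v\in S_{\infty}} S^{\circ}(\mathcal{F}_v) \to D_{\mathcal{F}}[1]$ and then invoke Proposition~\ref{Weil_F} to identify the terms $H^n_W(U,\mathcal{F})$. The paper's argument is three sentences and leaves the boundary verifications implicit; your added remarks about the endpoints of the sequence and the factorization through $H^n_{et}$ simply spell out what the paper takes for granted.
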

 \begin{proof}
There is a distinguished triangle
 \[ D_\mathcal{F} \to R\Gamma_W(U,\mathcal{F}) \to \prod_{v\in S_{\infty}}S^{\circ}(\mathcal{F}_v) \to D_\mathcal{F}[1]. \]
 The long exact sequence of cohomology  yields $\eqref{Hn_dF_seq1}$
 and $H^{n}(D_{\mathcal{F}}) \simeq \prod_{v\in S_{\infty}}H^{n-1}_T(K_v,\mathcal{F}_v)$ for $n\notin \{0,1,..,4\}$. The lemma then follows from Proposition $\ref{Weil_F}$.
 \end{proof}
 
 \begin{prop}\label{prop_theta_iso}
 Let $\mathcal{F}$ be a strongly-$\mathbb{Z}$-constructible sheaf on $U$. Then $H^n(D_\mathcal{F})$ is finite for $n\neq 1,2$ and  we can construct an isomorphism $\Theta(\mathcal{F}) :  H^1(D_\mathcal{F})_{\mathbb{R}} \to H^2(D_\mathcal{F})_{\mathbb{R}}$.
 \end{prop}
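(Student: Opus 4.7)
The plan is to use Proposition~\ref{Hn_dF} to pin down $H^n(D_{\mathcal{F}})$, observe that tensoring with $\mathbb{R}$ collapses everything outside degrees $1$ and $2$, and then define $\Theta(\mathcal{F})$ from the non-degenerate regulator pairing of Definition~\ref{defn_pairing}.

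\textbf{Finiteness for $n\neq 1,2$.} Since $\mathcal{F}$ is $\mathbb{Z}$-constructible, each $\mathcal{F}_v$ is a finitely generated $G_{K_v}$-module, so $H^n(K_v,\mathcal{F}_v)$ is finite for every archimedean $v$ and every $n\ge 1$, and $H^n_T(K_v,\mathcal{F}_v)$ is finite in every degree. Combining this with Proposition~\ref{Weil_F} (which gives $H^n_W(U,\mathcal{F})=0$ for $n>3$ and $H^3_W(U,\mathcal{F})=\operatorname{Hom}_U(\mathcal{F},\mathbb{G}_m)_{tor}^D$ finite) and with the Artin--Verdier isomorphism $H^2_c(U,\mathcal{F})\simeq\operatorname{Ext}^1_U(\mathcal{F},\mathbb{G}_m)_{tor}^D$ (finite by condition~2 of Definition~\ref{strong_const}), the sequence~\eqref{Hn_dF_seq1} forces finiteness of $H^n(D_\mathcal{F})$ for $n\ge 3$, while the asymptotic $H^n(D_\mathcal{F})\simeq\prod_{v\in S_\infty}H^{n-1}_T(K_v,\mathcal{F}_v)$ handles $n<0$. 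For $n=0$: $H^0(D_\mathcal{F})=\ker(H^0_c(U,\mathcal{F})\to\prod_{v\in S_\infty}H^0(K_v,\mathcal{F}_v))$. The kernel of $H^0_c\to H^0_{et}(U,\mathcal{F})$ is a quotient of the finite group $\prod_{v\in S_\infty}H^{-1}_T(K_v,\mathcal{F}_v)$ via~\eqref{compact_coh}, while the image of $H^0(D_\mathcal{F})$ in $H^0_{et}(U,\mathcal{F})$ vanishes after restriction to every $v\in S$ (archimedean restrictions vanish by the definition of $H^0(D_\mathcal{F})$; non-archimedean restrictions vanish because $H^0(D_\mathcal{F})\subseteq H^0_c$), so it lands in $\ker(H^0_{et}(U,\mathcal{F})\to H^0_{et}(U,\mathcal{F}_S))$, finite by condition~1.

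\textbf{Identifying $H^1_\mathbb{R}$ and $H^2_\mathbb{R}$.} Tensoring~\eqref{Hn_dF_seq1} with $\mathbb{R}$ kills every archimedean local term in positive degree together with every finite group identified above. Around degree two this yields $H^2(D_\mathcal{F})_\mathbb{R}\xrightarrow{\sim}H^2_W(U,\mathcal{F})_\mathbb{R}\xrightarrow{\sim}\operatorname{Hom}_U(\mathcal{F},\mathbb{G}_m)^*\otimes\mathbb{R}$ via Proposition~\ref{Weil_F}. Around degree one it gives a short exact sequence
\[
0 \to \operatorname{coker}\bigl((H^0_c)_\mathbb{R}\to \textstyle\prod_{v\in S_\infty}H^0(K_v,\mathcal{F}_v)_\mathbb{R}\bigr) \to H^1(D_\mathcal{F})_\mathbb{R} \to (H^1_c)_\mathbb{R} \to 0.
\]
Now~\eqref{compact_coh} tensored with $\mathbb{R}$ (using finiteness of $H^1_{et}(U,\mathcal{F})$) identifies $(H^0_c)_\mathbb{R}$ and $(H^1_c)_\mathbb{R}$ respectively as the kernel and cokernel of $H^0_{et}(U,\mathcal{F})_\mathbb{R}\to\prod_{v\in S-S_\infty}H^0(K_v,\mathcal{F}_v)_\mathbb{R}$. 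A diagram chase (splitting $H^0_{et}(U,\mathcal{F}_S)_\mathbb{R}$ into its archimedean and non-archimedean factors and exploiting the injectivity $H^0_{et}(U,\mathcal{F})_\mathbb{R}\hookrightarrow H^0_{et}(U,\mathcal{F}_S)_\mathbb{R}$ from condition~1) then matches this sequence with the analogous filtration of $(H^0_{et}(U,\mathcal{F}_S)/H^0_{et}(U,\mathcal{F}))_\mathbb{R}$ and produces a canonical isomorphism $H^1(D_\mathcal{F})_\mathbb{R}\xrightarrow{\sim}(H^0_{et}(U,\mathcal{F}_S)/H^0_{et}(U,\mathcal{F}))_\mathbb{R}$.

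\textbf{Defining $\Theta$ and main obstacle.} The regulator pairing of Definition~\ref{defn_pairing} extends $\mathbb{R}$-linearly, and condition~3 of Definition~\ref{strong_const} is exactly the statement of its non-degeneracy; the pairing therefore induces an isomorphism $(H^0_{et}(U,\mathcal{F}_S)/H^0_{et}(U,\mathcal{F}))_\mathbb{R}\xrightarrow{\sim}\operatorname{Hom}_U(\mathcal{F},\mathbb{G}_m)^*\otimes\mathbb{R}$. Composing with the two isomorphisms of the previous paragraph yields $\Theta(\mathcal{F})$. The main obstacle lies in the identification of $H^1(D_\mathcal{F})_\mathbb{R}$: while both sides sit in three-term exact sequences with matching outer terms, producing a \emph{canonical} isomorphism requires a careful diagram chase that tracks how the connecting maps of the triangle defining $D_\mathcal{F}$ and of the compact support triangle of~\eqref{compact_coh} interact, so that $\Theta(\mathcal{F})$ depends functorially only on $\mathcal{F}$.
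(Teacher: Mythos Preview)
Your argument follows the same three–step plan as the paper: identify $H^2(D_{\mathcal{F}})_{\mathbb{R}}$ with $\operatorname{Hom}_U(\mathcal{F},\mathbb{G}_m)^*_{\mathbb{R}}$, identify $H^1(D_{\mathcal{F}})_{\mathbb{R}}$ with $\bigl(H^0_{et}(U,\mathcal{F}_S)/H^0_{et}(U,\mathcal{F})\bigr)_{\mathbb{R}}$, and bridge the two via the regulator pairing. The finiteness part and the identification of $H^2$ are handled exactly as the paper does.

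The one substantive divergence is in how you treat $\Theta_1$. You look for a \emph{canonical} isomorphism $H^1(D_{\mathcal{F}})_{\mathbb{R}}\simeq\bigl(H^0_{et}(U,\mathcal{F}_S)/H^0_{et}(U,\mathcal{F})\bigr)_{\mathbb{R}}$ by diagram chasing, and flag this as the main obstacle. The paper takes the opposite view: it does \emph{not} claim canonicity. It sets up a $3\times 3$ diagram (rows $(\mathcal{E}_1),(\mathcal{E}_2)$ coming from \eqref{compact_coh} and \eqref{Hn_dF_seq1}, column $(\mathcal{E}_3)$ canonical) and a $2\times 3$ diagram comparing the resulting column $(\mathcal{E}_0)$ with the canonical $(\mathcal{E}_4)$; it then constructs $\Theta_1$ by \emph{choosing sections} so that both diagrams commute. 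The point, stated immediately after the proof, is that although $\Theta_1$ itself depends on these choices, $|\det\Theta(\mathcal{F})|$ (with respect to integral bases) does not, and that is all that is needed for Definition~\ref{euler_defn1}. Your hoped-for canonical map is probably not available: the two middle terms sit in short exact sequences of $\mathbb{R}$-vector spaces with matching outer terms but no natural comparison map, so any isomorphism compatible with the outer terms involves a choice of splitting. For the bare statement of the proposition this is harmless, but for the subsequent use of $\Theta(\mathcal{F})$ you should drop the canonicity claim and instead record, as the paper does, that the determinant is independent of the choices made.
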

 
 \begin{proof}
 Clearly, $H^n(D_\mathcal{F})$ is finite for $n\notin \{0,1,2\}$. It is not hard to see that condition 1 of \ref{strong_const} is equivalent to the $H^0(D_\mathcal{F})$ is finite. 
 Consider the following diagram
 \begin{equation}\label{theta_iso_3x3}
 \xymatrix{
 &  0  \ar[d] &  0 \ar[d]  &  0 \ar[d] &  \\
0 \ar[r] &  \left(\frac{H^0_{et}(U,\mathcal{F})}{H^0_{c}(U,\mathcal{F})}\right)_{\mathbb{R}}
\ar[r]  \ar[d]  &   \underset{v\in S-S_{\infty}}{\prod}H^0(K_v,\mathcal{F}_v)_{\mathbb{R}} \ar[r]\ar[d]
&  H^1_c(U,\mathcal{F})_{\mathbb{R}} \ar[r]\ar[d]^{id} & 0  & (\mathcal{E}_1)  \\ 
0 \ar[r] &  \left(\frac{ \underset{v\in S-S_{\infty}}{\prod}H^0(K_v,\mathcal{F}_v)}{H^0_{c}(U,\mathcal{F})}\right)_{\mathbb{R}}
\ar[r]  \ar[d]  &   H^1(D_{\mathcal{F}})_{\mathbb{R}} \ar[r]\ar[d]
&  H^1_c(U,\mathcal{F})_{\mathbb{R}} \ar[r]\ar[d] & 0  & (\mathcal{E}_2)  \\ 
0 \ar[r] & 
\left(\frac{ \underset{v\in S-S_{\infty}}{\prod}H^0(K_v,\mathcal{F}_v)}{H^0_{et}(U,\mathcal{F})}\right)_{\mathbb{R}}
\ar[r]^{id}  \ar[d]  & \left(\frac{ \underset{v\in S-S_{\infty}}{\prod}H^0(K_v,\mathcal{F}_v)}{H^0_{et}(U,\mathcal{F})}\right)_{\mathbb{R}}   \ar[r]\ar[d]
&  0 \ar[r]\ar[d] & 0  \\ 
 &  0 &  0   &  0  &   \\
 &  (\mathcal{E}_3)  & (\mathcal{E}_0)
 }
 \end{equation}
 Note that the exact sequences $(\mathcal{E}_1)$,  $(\mathcal{E}_2)$ are induced from \eqref{compact_coh} and \eqref{Hn_dF_seq1} respectively, whereas   $(\mathcal{E}_3)$ is a canonical exact sequence. By choosing sections of $(\mathcal{E}_1)$ and  $(\mathcal{E}_2)$ we can construct  $(\mathcal{E}_0)$ such that \eqref{theta_iso_3x3} is commutative.
 Next, we consider the following diagram
 \begin{equation}\label{theta_iso_2x3}
  \xymatrix{
0 \ar[r] &  \underset{v\in S-S_{\infty}}{\prod}H^0(K_v,\mathcal{F}_v)_{\mathbb{R}}
\ar[r]  \ar[d]^{id}  &   H^1(D_{\mathcal{F}})_{\mathbb{R}} \ar[r]\ar[d]^{\Theta_1}
&  \left(\frac{ \underset{v\in S_{\infty}}{\prod}H^0(K_v,\mathcal{F}_v)}{H^0_{et}(U,\mathcal{F})}\right)_{\mathbb{R}}  \ar[r]\ar[d]^{id} & 0  & (\mathcal{E}_0)  \\ 
0 \ar[r] & \underset{v\in S-S_{\infty}}{\prod}H^0(K_v,\mathcal{F}_v)_{\mathbb{R}}
\ar[r]   & \left(\frac{ \underset{v\in S}{\prod}H^0(K_v,\mathcal{F}_v)}{H^0_{et}(U,\mathcal{F})}\right)_{\mathbb{R}}   \ar[r]
&  \left(\frac{ \underset{v\in S_{\infty}}{\prod}H^0(K_v,\mathcal{F}_v)}{H^0_{et}(U,\mathcal{F})}\right)_{\mathbb{R}} \ar[r] & 0  & (\mathcal{E}_4) 
 }
 \end{equation}
 Note that $(\mathcal{E}_4)$ is canonical and $(\mathcal{E}_0)$ is taken from \eqref{theta_iso_3x3}.  
Again by choosing sections of  $(\mathcal{E}_4)$ and $(\mathcal{E}_0)$, we can construct an isomorphism (not canonically),
 \begin{equation}
 \Theta_1 : H^1(D_{\mathcal{F}})_{\mathbb{R}}  \xrightarrow{\simeq} 
 \left(\frac{ \underset{v\in S}{\prod}H^0(K_v,\mathcal{F}_v)}{H^0_{et}(U,\mathcal{F})}\right)_{\mathbb{R}}
 \end{equation}
 such that \eqref{theta_iso_2x3} commutes.
 From  \eqref{Hn_dF_seq1}, we have an isomorphism 
 \[ \Theta_2: H^2(D_{\mathcal{F}})_{\mathbb{R}}\xrightarrow{\simeq} H^2_W(U,\mathcal{F})_{\mathbb{R}} \xrightarrow{\simeq} Hom_U(\mathcal{F},\mathbb{G}_m)^{*}_{\mathbb{R}}.\]
 Since the regulator pairing \eqref{defn_pairing} is non-degenerate, there is an isomorphism 
 \[ \Theta_3 : \left(\frac{ \underset{v\in S}{\prod}H^0(K_v,\mathcal{F}_v)}{H^0_{et}(U,\mathcal{F})}\right)_{\mathbb{R}} \simeq Hom_U(\mathcal{F},\mathbb{G}_m)^{*}_{\mathbb{R}} .\]
 Let us define $\Theta(\mathcal{F}):=\Theta_2^{-1}\Theta_3\Theta_1$. Then $\Theta(\mathcal{F}) :  H^1(D_\mathcal{F})_{\mathbb{R}} \to H^2(D_\mathcal{F})_{\mathbb{R}}$ is an isomorphism. 
 \end{proof}
  With respect to integral bases and subject to the condition that the diagrams \eqref{theta_iso_3x3} and \eqref{theta_iso_2x3} are commutative, the determinant of $\Theta(\mathcal{F})$ does not depend on the choices of sections of $(\mathcal{E}_i)$ and is well-defined. Therefore, we can make the following definition. 
\begin{defn}\label{euler_defn1}
Let $\mathcal{F}$ be a strongly-$\mathbb{Z}$-constructible sheaf  on $U$.
For each $v\in S_{\infty}$, let $h(G_v,\mathcal{F}_v)$ be the Herbrand quotient of $\mathcal{F}_v$ with respect to the cyclic group $G_v=G(\mathbb{C}/K_v)$.
 We define the Euler characteristic $\chi_U(\mathcal{F})$ by 
\[ {\chi}_U(\mathcal{F}):= \prod_{n=0}^5[H^n(D_{\mathcal{F}})_{tor}]^{(-1)^{n}}|\det(\Theta(\mathcal{F}))| \prod_{v\in S_{\infty}}h(G_v,\mathcal{F}_v)^{3}\]
where $\det(\Theta(\mathcal{F}))$  is computed with respect to integral bases.  We also define the rank of $\mathcal{F}$ to be
\[ E_U(\mathcal{F}):=\sum_{n\in \mathbb{Z}}(-1)^n n.\mathrm{rank}_{\mathbb{Z}}H^n(D_{\mathcal{F}}).\]
If $U=X=Spec(O_K)$ then we simply write $\chi(\mathcal{F})$ and $E(\mathcal{F})$ instead of $\chi_X(\mathcal{F})$ and $E_X(\mathcal{F})$  .
\end{defn}
We will compute $\chi_U(\mathcal{F})$ in the next theorem. This is a rather long and tedious calculation. The readers are advised to refer to the appendix for the facts we need about determinants of exact sequences and orders of torsion subgroups. 
 \begin{thm}\label{thm_chi1_formula}
Let $\mathcal{F}$  be a strongly-$\mathbb{Z}$-constructible sheaf on $U$. 
Let $\Psi^n(\mathcal{F})$ be the map $H^n_{et}(U,\mathcal{F}) \to \prod_{v\in S}H^n(K_v,\mathcal{F}_v)$ and $\delta(\mathcal{F})$ be the map
\[ \underset{v\in S}{\prod}H^0(K_v,\mathcal{F}_v) \to
{ \underset{v\in S}{\prod}H^0(K_v,\mathcal{F}_v)}/{H^0_{et}(U,\mathcal{F})}.
\]
Then $E_U(\mathcal{F})=\mathrm{rank}_{\mathbb{Z}}Hom_U(\mathcal{F},\mathbb{G}_m)$ and
\begin{equation}\label{eqn_thm_chi1}
{\chi}_U(\mathcal{F})=
 \frac{[H^0_{et}(U,\mathcal{F})_{tor}][Ext^1_U(\mathcal{F},\mathbb{G}_m)]R(\mathcal{F})}
{[Hom_U(\mathcal{F},\mathbb{G}_m)^D_{tor}][\ker(\Psi^1(\mathcal{F}))][\mathrm{cok}(\delta(\mathcal{F})_{tor})]\prod_{v\in S}[H^0(K_v,\mathcal{F}_v)_{tor}]} 
\end{equation}
 \end{thm}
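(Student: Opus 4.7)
The plan is to reduce the calculation of $\chi_U(\mathcal F)$ to three ingredients: (i) the ranks and torsion orders of $H^n(D_\mathcal F)$ read off from the long exact sequence \eqref{Hn_dF_seq1}, (ii) the determinant of $\Theta(\mathcal F)=\Theta_2^{-1}\Theta_3\Theta_1$ computed via the commutative diagrams \eqref{theta_iso_3x3} and \eqref{theta_iso_2x3}, and (iii) the Artin--Verdier duality together with the Herbrand quotient identities at the archimedean places. The rank statement $E_U(\mathcal F)=\mathrm{rank}_{\mathbb Z} Hom_U(\mathcal F,\mathbb G_m)$ is immediate: Proposition~\ref{prop_theta_iso} shows that only $H^1(D_\mathcal F)$ and $H^2(D_\mathcal F)$ have positive rank and that these ranks coincide, so $E_U(\mathcal F)=\mathrm{rank}\,H^2(D_\mathcal F)$, and the map $H^2(D_\mathcal F)\to H^2_W(U,\mathcal F)$ in \eqref{Hn_dF_seq1} combined with Proposition~\ref{Weil_F} identifies this rank with $\mathrm{rank}\,Hom_U(\mathcal F,\mathbb G_m)$.

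For the determinant, I would first invoke Proposition~\ref{prop_appendix_det_ses} (the appendix lemma on determinants of short exact sequences) to each of the four exact sequences $(\mathcal E_0),\dots,(\mathcal E_4)$ used to build $\Theta_1$, obtaining $|\!\det\Theta_1|$ as an explicit ratio of orders of torsion subgroups of the terms appearing in \eqref{theta_iso_3x3} and \eqref{theta_iso_2x3}; here $[\ker(\Psi^1(\mathcal F))]$ and $[\mathrm{cok}(\delta(\mathcal F)_{tor})]$ will enter naturally because they measure the defects of the horizontal maps in the diagrams. Similarly, $|\!\det\Theta_2|$ is governed by the short exact sequence of Proposition~\ref{Weil_F} together with the fragment of \eqref{Hn_dF_seq1} around $H^2(D_\mathcal F)$, which expresses it as a ratio involving $[H^2_c(U,\mathcal F)_{tor}]$, $[Ext^1_U(\mathcal F,\mathbb G_m)_{tor}^D]$ and the orders of the local $H^2(K_v,\mathcal F_v)$. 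Finally $|\!\det\Theta_3|=R(\mathcal F)$ up to the orders of the torsion parts of $\bigl(\prod_{v\in S}H^0(K_v,\mathcal F_v)/H^0_{et}(U,\mathcal F)\bigr)$ and $Hom_U(\mathcal F,\mathbb G_m)$; this is simply the change-of-basis matrix between an integral basis and the basis used in Definition~\ref{defn_regulator}.

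Next I would express every $[H^n(D_\mathcal F)_{tor}]$ in the definition of $\chi_U(\mathcal F)$ through \eqref{Hn_dF_seq1}, writing each one as an alternating product of orders of the $H^n_c(U,\mathcal F)$, $H^n_W(U,\mathcal F)$, $\prod_{v\in S_\infty}H^n(K_v,\mathcal F_v)$ (Tate cohomology in the real case) and $Hom_U(\mathcal F,\mathbb G_m)^D_{tor}$. Artin--Verdier duality (as used already in the proof of Proposition~\ref{long_exact_Weil}) converts $H^n_c$ into $Ext^{3-n}_U(\mathcal F,\mathbb G_m)^D_{tor}$, producing $[Ext^1_U(\mathcal F,\mathbb G_m)]$ in the numerator. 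The factor $[H^0_{et}(U,\mathcal F)_{tor}]$ comes out of the analysis of $H^0(D_\mathcal F)$ and the kernel of $H^0_{et}(U,\mathcal F)\to H^0_{et}(U,\mathcal F_S)$. For $n\ge 3$ every $H^n(D_\mathcal F)$ is identified (via \eqref{Hn_dF_seq1}) with a sum of complete Tate cohomology groups of $G_v=G(\mathbb C/K_v)$ for $v\in S_\infty$, whose alternating product of orders is exactly $\prod_v h(G_v,\mathcal F_v)^{-3}$ -- this is precisely why the Herbrand quotient cubed appears in Definition~\ref{euler_defn1}, and after cancellation these factors contribute only the $[H^0(K_v,\mathcal F_v)_{tor}]$ terms at the archimedean primes.

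The main obstacle will be the bookkeeping in the middle step: many torsion orders from $H^0,H^1$ of $D_\mathcal F$, from the determinants of the $\Theta_i$ and from Artin--Verdier duality appear with multiplicities that must cancel cleanly. The cleanest way to organise this is to prove a sequence of auxiliary identities expressing $[H^i(D_\mathcal F)_{tor}]\cdot|\!\det\Theta_1^{(i)}|$ (where $\Theta_1^{(i)}$ is the $i$-th piece induced by the splittings of $(\mathcal E_i)$) in closed form, and then to combine them using the telescoping formula from the appendix. Once this bookkeeping is carried out the numerator collapses to $[H^0_{et}(U,\mathcal F)_{tor}][Ext^1_U(\mathcal F,\mathbb G_m)]R(\mathcal F)$ and the denominator to the stated product, proving \eqref{eqn_thm_chi1}.
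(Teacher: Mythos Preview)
Your proposal is correct and follows essentially the same route as the paper: decompose $|\det\Theta(\mathcal F)|$ as $|\det\Theta_1|\cdot R(\mathcal F)\cdot|\det\Theta_2|^{-1}$, evaluate $|\det\Theta_1|$ via the $3\times 3$ determinant lemma (Lemma~\ref{det_3x3}) applied to \eqref{theta_iso_3x3} and \eqref{theta_iso_2x3}, evaluate $|\det\Theta_2|$ via Proposition~\ref{det_tor} applied to the fragment of \eqref{Hn_dF_seq1} around $H^2(D_\mathcal F)$, and then combine with the torsion orders $[H^n(D_\mathcal F)_{tor}]$ and the Herbrand correction (the paper packages the $\nu(\mathcal E_i)$ computations into two auxiliary lemmas, Lemmas~\ref{lemma_v1v2} and~\ref{lemma_v3v4}). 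One small correction: $|\det\Theta_3|=R(\mathcal F)$ on the nose, not ``up to torsion orders'', since $R(\mathcal F)$ is by Definition~\ref{defn_regulator} already the determinant of the pairing with respect to integral bases of the torsion-free quotients, and $\Theta_3$ is the induced map of $\mathbb R$-vector spaces where torsion has vanished.
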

 \begin{proof}
 Since $H^n(D_{\mathcal{F}})$ is finite for $n\neq 1,2$ and $\mathrm{rank}_{\mathbb{Z}}H^n(D_{\mathcal{F}})=\mathrm{rank}_{\mathbb{Z}}Hom_U(\mathcal{F},\mathbb{G}_m)$ for $n=1,2$, the first part of the theorem is clear. For the second part,
 we need to compute $\det(\Theta_i)$ for $i=1,2,3$. Clearly, $\det(\Theta_3)=R(\mathcal{F})$. Next we compute $\det(\Theta_2)$. From \eqref{Hn_dF_seq1}, we have
 \[ 0 \to Q_2 \to H^2(D_{\mathcal{F}}) \to H^2_W(U,\mathcal{F}) \to Q_3 \to 0\]
 where $Q_2$ is the cokernel of the map 
 \[ H^1_c(U,\mathcal{F}) \to \prod_{v\in S_{\infty}}H^1(K_v,\mathcal{F}_v)\]
 and $Q_3$ is the kernel of 
 \[ \prod_{v\in S_{\infty}}H^2(K_v,\mathcal{F}_v) \to H^3(D_{\mathcal{F}}) .\]
 Then by Proposition \ref{det_tor}
 \begin{eqnarray}
 \det(\Theta_2)&=& \frac{[H^2(D_{\mathcal{F}})_{tor}][Q_3]}{[Q_2][H^2_W(U,\mathcal{F})_{tor}]} 
 = \frac{[H^2(D_{\mathcal{F}})_{tor}][H^4(D_{\mathcal{F}})_{tor}][Hom_U(\mathcal{F},\mathbb{G}_m)^D_{tor}]}{[H^3(D_{\mathcal{F}})_{tor}][Q_2][H^2_c(U,\mathcal{F})]
  \prod_{v\in S_{\infty}}h(G_v,\mathcal{F}_v)}
.
 \end{eqnarray}
 Now we compute $\det(\Theta_1)$. From \eqref{theta_iso_2x3} and \eqref{theta_iso_3x3} and Lemma $\ref{det_3x3}$, we have 
 \[ \det(\Theta_1)=\frac{\nu(\mathcal{E}_4)}{\nu(\mathcal{E}_0)}=
 \frac{\nu(\mathcal{E}_1)\nu(\mathcal{E}_4)}{\nu(\mathcal{E}_2)\nu(\mathcal{E}_3)}\]
 where $\nu(\mathcal{E}_i)$ is the determinant of $(\mathcal{E}_i)$ with respect to integral bases (see the appendix for details).
 Hence, we need to compute $\nu(\mathcal{E}_i)$ for $i=1,..,4$. We will do so in the next lemmas. 
 \end{proof}
\begin{lemma}\label{lemma_v1v2}
With notations as in Theorem $\ref{thm_chi1_formula}$, we have 
\begin{eqnarray*}
\nu(\mathcal{E}_1)&=& \frac{\left[\left(\frac{H^0_{et}(U,\mathcal{F})}{H^0_{c}(U,\mathcal{F})}\right)_{tor}\right][H^1_{c}(U,\mathcal{F})_{tor}]}{\prod_{v\in S_{\infty}}[H^0_T(K_v,\mathcal{F}_v)]\prod_{v\in S-S_{\infty}}[H^0(K_v,\mathcal{F}_v)_{tor}][\ker(\Psi^1_U(\mathcal{F}))]} \\
\nu(\mathcal{E}_2)&=& \frac{\left[\left(\frac{\prod_{v\in S_{\infty}}H^0(K_v,\mathcal{F}_v)}{H^0_{c}(U,\mathcal{F})}\right)_{tor}\right][H^1_{c}(U,\mathcal{F})_{tor}]}
{[H^1(D_{\mathcal{F}})_{tor}][Q_1]} 
\end{eqnarray*}
 where $Q_1$ is the image of the map 
 \[ H^1_c(U,\mathcal{F}) \to \prod_{v\in S_{\infty}}H^1(K_v,\mathcal{F}_v).\]
\end{lemma}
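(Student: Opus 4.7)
The plan is to reduce both computations to the general principle recalled in the appendix: for a four-term integral exact sequence $0 \to A \to B \to C \to D \to 0$ of finitely generated abelian groups, the invariant $\nu$ of the resulting short exact sequence of $\mathbb{R}$-vector spaces (after tensoring and passing to torsion-free quotients) is an explicit ratio of orders of the torsion subgroups of $A,B,C,D$ together with the orders of those terms that become trivial after $-\otimes\mathbb{R}$. So the task is simply to identify the correct integral four-term exact sequences underlying $(\mathcal{E}_1)$ and $(\mathcal{E}_2)$, and then feed them into that appendix formula.

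For $\nu(\mathcal{E}_1)$, I would extract the integral four-term exact sequence from the long exact sequence \eqref{compact_coh}, namely
\[
0 \to \frac{H^0_{et}(U,\mathcal{F})}{H^0_{c}(U,\mathcal{F})} \to \prod_{v\in S_\infty}H^0_T(K_v,\mathcal{F}_v) \oplus \prod_{v\in S-S_\infty}H^0(K_v,\mathcal{F}_v) \to H^1_c(U,\mathcal{F}) \to \ker(\Psi^1(\mathcal{F})) \to 0,
\]
where the left-hand quotient is interpreted as $H^0_{et}$ modulo the image of $H^0_c$ (the kernel of $H^0_c \to H^0_{et}$, being a subquotient of archimedean Tate cohomology, is finite). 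Each Tate term $H^0_T(K_v,\mathcal{F}_v)$ is finite, and by condition 2 of Definition \ref{strong_const} the group $H^1_{et}(U,\mathcal{F})$ is finite, hence so is the subgroup $\ker(\Psi^1(\mathcal{F}))$. Tensoring with $\mathbb{R}$ therefore kills precisely the Tate summands, the torsion of $\prod_{v\in S-S_\infty}H^0$, the torsion of $H^1_c$, and the finite group $\ker(\Psi^1)$, producing the short exact sequence $(\mathcal{E}_1)$. The appendix formula then yields the claimed expression.

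For $\nu(\mathcal{E}_2)$, I would do the analogous extraction from the long exact sequence \eqref{Hn_dF_seq1} of Proposition \ref{Hn_dF}, giving the integral four-term sequence
\[
0 \to \frac{\prod_{v\in S_\infty}H^0(K_v,\mathcal{F}_v)}{H^0_c(U,\mathcal{F})} \to H^1(D_\mathcal{F}) \to H^1_c(U,\mathcal{F}) \to Q_1 \to 0,
\]
where $Q_1$ is the image of the edge map $H^1_c(U,\mathcal{F}) \to \prod_{v\in S_\infty}H^1(K_v,\mathcal{F}_v)$ appearing in \eqref{Hn_dF_seq1} and is automatically finite. Tensoring with $\mathbb{R}$ produces $(\mathcal{E}_2)$, and the appendix formula gives the stated expression for $\nu(\mathcal{E}_2)$.

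The main obstacle will be the routine but delicate bookkeeping: one must match the torsion contributions coming out of the appendix formula term-by-term with the groups appearing in the stated formulas, while consistently tracking the (finite) discrepancy between $H^0_c(U,\mathcal{F})$ and its image in the neighbouring group. No single step is hard, but one has to keep several archimedean Tate contributions straight and verify that, after cancellation, the numerators and denominators land exactly in the form written in the lemma.
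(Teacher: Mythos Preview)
Your proposal is correct and follows exactly the paper's own proof: the paper identifies the same two integral four-term exact sequences (extracted from \eqref{compact_coh} and \eqref{Hn_dF_seq1} respectively) and then simply applies Proposition~\ref{det_tor} to read off $\nu(\mathcal{E}_1)$ and $\nu(\mathcal{E}_2)$ as alternating products of torsion orders. The only difference is cosmetic---the paper states both sequences and says ``the lemma follows by just applying Proposition~\ref{det_tor}'' without spelling out the bookkeeping you describe.
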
 
 \begin{proof}
 Note that $(\mathcal{E}_1)$ and $(\mathcal{E}_2)$ are induced by the exact sequences
 \begin{equation*}
  0 \to  \frac{H^0_{et}(U,\mathcal{F})}{H^0_{c}(U,\mathcal{F})} \to 
  \prod_{v\in S_{\infty}}H^0_T(K_v,\mathcal{F}_v)\oplus \prod_{v\in S-S_{\infty}}H^0(K_v,\mathcal{F}_v) \to H^1_{c}(U,\mathcal{F}) \to \ker(\Psi^1_U(\mathcal{F})) \to 0,
\end{equation*}  
\begin{equation*}
  0 \to  \frac{\prod_{v\in S_{\infty}}H^0(K_v,\mathcal{F}_v)}{H^0_{c}(U,\mathcal{F})} \to 
 H^1(D_\mathcal{F}) \to H^1_{c}(U,\mathcal{F}) \to Q_1 \to 0.
\end{equation*}
Then the lemma follows by just applying Proposition  \ref{det_tor}. 
 \end{proof}
 
 \begin{lemma}\label{lemma_v3v4}
 Let $\beta$ be the map $H^0_{et}(U,\mathcal{F}) \to \prod_{v\in S_{\infty}}H^0(K_v,\mathcal{F}_v)$. Then 
 \begin{eqnarray*}
\nu(\mathcal{E}_3)&=& \frac{[H^0(D_{\mathcal{F}})_{tor}]
\left[\left(\frac{H^0_{et}(U,\mathcal{F})}{H^0_{c}(U,\mathcal{F})}\right)_{tor}\right]\left[\left(\frac{\prod_{v\in S_{\infty}}H^0(K_v,\mathcal{F}_v)}{H^0_{et}(U,\mathcal{F})}\right)_{tor}\right]}
{\prod_{v\in S_{\infty}}[H^{-1}_T(K_v,\mathcal{F}_v)][\ker(\beta)]\left[\left(\frac{\prod_{v\in S_{\infty}}H^0(K_v,\mathcal{F}_v)}{H^0_{c}(U,\mathcal{F})}\right)_{tor}\right]} \\
\nu(\mathcal{E}_4)&=& \frac{[H^0_{et}(U,\mathcal{F})_{tor}]
\left[\left(\frac{\prod_{v\in S_{\infty}}H^0(K_v,\mathcal{F}_v)}{H^0_{et}(U,\mathcal{F})}\right)_{tor}\right]}
{[\ker(\beta)]\prod_{v\in S_{\infty}}[H^0(K_v,\mathcal{F}_v)_{tor}][\mathrm{cok}(\delta(\mathcal{F})_{tor})]}.
\end{eqnarray*}
 \end{lemma}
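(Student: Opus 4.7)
The plan is to realize each of $(\mathcal{E}_3)$ and $(\mathcal{E}_4)$ as the realification of an exact sequence of finitely generated abelian groups (possibly with several terms), and then apply the determinant formula from the appendix (Proposition \ref{det_tor}), which expresses $\nu$ of such a sequence as an alternating product of orders of torsion subgroups of the terms.

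For $\nu(\mathcal{E}_3)$, I would first identify the underlying sequence at the integral level. Let $\alpha : H^0_c(U,\mathcal{F}) \to H^0_{et}(U,\mathcal{F})$ be the edge map from \eqref{compact_coh} and $\beta : H^0_{et}(U,\mathcal{F}) \to \prod_{v\in S_\infty}H^0(K_v,\mathcal{F}_v)$ the map whose composition with $\alpha$ is the map $f: H^0_c(U,\mathcal{F}) \to \prod_{v\in S_\infty}H^0(K_v,\mathcal{F}_v)$ coming from \eqref{Hn_dF_seq1}. Applying the snake lemma to the commutative square with rows $\{H^0_c \xrightarrow{\alpha} H^0_{et} \to H^0_{et}/H^0_c \to 0\}$ and $\{H^0_c \xrightarrow{f} \prod_{v\in S_\infty}H^0 \to \prod_{v\in S_\infty}H^0/H^0_c \to 0\}$ with vertical map $\beta$, and using that $\ker f = \mathrm{im}(H^0(D_\mathcal{F}) \to H^0_c)$ and that this image is in turn a quotient of $H^0(D_\mathcal{F})$ by the image of $\prod_{v\in S_\infty}H^{-1}_T(K_v,\mathcal{F}_v) \to H^0(D_\mathcal{F})$, I obtain a six-term exact sequence of finitely generated abelian groups
\[
0 \to \prod_{v\in S_\infty}H^{-1}_T(K_v,\mathcal{F}_v) \to H^0(D_\mathcal{F}) \to \ker\beta \to H^0_{et}/H^0_c \to \prod_{v\in S_\infty}H^0/H^0_c \to \prod_{v\in S_\infty}H^0/H^0_{et} \to 0
\]
whose first three terms are finite (finiteness of $H^0(D_\mathcal{F})$ follows from condition 1 of Definition \ref{strong_const}, as already noted in the proof of Proposition \ref{prop_theta_iso}). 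The stated formula for $\nu(\mathcal{E}_3)$ is then the alternating product of orders of torsion subgroups prescribed by Proposition \ref{det_tor}.

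For $\nu(\mathcal{E}_4)$, the canonical sequence
\[ 0 \to \prod_{v\in S-S_\infty}H^0(K_v,\mathcal{F}_v)_\mathbb{R} \to \Bigl(\prod_{v\in S}H^0(K_v,\mathcal{F}_v)/H^0_{et}(U,\mathcal{F})\Bigr)_\mathbb{R} \to \Bigl(\prod_{v\in S_\infty}H^0(K_v,\mathcal{F}_v)/H^0_{et}(U,\mathcal{F})\Bigr)_\mathbb{R} \to 0 \]
again lifts to a sequence of abelian groups; the failure of injectivity of the first map is precisely the intersection of $\prod_{v\in S-S_\infty}H^0$ with the image of $H^0_{et}$ inside $\prod_{v\in S}H^0$, which (modulo the finite kernel of $H^0_{et} \to \prod_{v\in S}H^0$ guaranteed by condition 1 of Definition \ref{strong_const}, whose order is $[H^0_{et}(U,\mathcal{F})_{tor}]/[\ker\beta]$) is exactly $\ker\beta$. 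The failure of the map on torsion subgroups contributes $[\mathrm{cok}(\delta(\mathcal{F})_{tor})]$ by the standard analysis of how the torsion subgroup functor fails to preserve exactness in the short exact sequence $0 \to H^0_{et} \to \prod_{v\in S}H^0 \to \prod_{v\in S}H^0/H^0_{et} \to 0$. Assembling these ingredients into a four-term exact sequence and invoking Proposition \ref{det_tor} produces the claimed formula.

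The main technical difficulty lies in the bookkeeping for $\nu(\mathcal{E}_4)$: the torsion corrections $[H^0_{et}(U,\mathcal{F})_{tor}]$, $[\ker\beta]$, and $[\mathrm{cok}(\delta(\mathcal{F})_{tor})]$ all arise from the same underlying short exact sequence $0 \to H^0_{et} \to \prod_{v\in S}H^0 \to \prod_{v\in S}H^0/H^0_{et} \to 0$, but enter the final formula through different mechanisms (kernel of the embedding into $\prod_{v\in S-S_\infty}H^0$, non-exactness of the torsion functor, and the action of $\beta$), and care is needed to ensure no factor is double-counted when combining them via the appendix formula.
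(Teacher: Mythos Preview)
Your treatment of $\nu(\mathcal{E}_3)$ is essentially the paper's proof: the six-term sequence you write down is exactly the kernel--cokernel sequence for the factorisation $H^0_c(U,\mathcal{F})\xrightarrow{\alpha}H^0_{et}(U,\mathcal{F})\xrightarrow{\beta}\prod_{v\in S_\infty}H^0(K_v,\mathcal{F}_v)$, with $\ker\alpha\simeq\prod_{v\in S_\infty}H^{-1}_T(K_v,\mathcal{F}_v)$ from \eqref{compact_coh} and $\ker(\beta\alpha)\simeq H^0(D_\mathcal{F})$ from \eqref{Hn_dF_seq1}. (Your parenthetical that this image is ``a quotient of $H^0(D_\mathcal{F})$ by the image of $\prod H^{-1}_T$'' is off: the map $H^0(D_\mathcal{F})\to H^0_c$ in \eqref{Hn_dF_seq1} is injective, so no quotient is needed. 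But the sequence you land on is correct.)

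For $\nu(\mathcal{E}_4)$ the idea is right but the execution has a genuine error. You assert that the kernel of $\Psi^0:H^0_{et}(U,\mathcal{F})\to\prod_{v\in S}H^0(K_v,\mathcal{F}_v)$ has order $[H^0_{et}(U,\mathcal{F})_{tor}]/[\ker\beta]$; this is false in general (already for $\mathcal{F}=\mathbb{Z}/n$ the three numbers $[\ker\Psi^0]$, $[H^0_{et,tor}]$, $[\ker\beta]$ need not satisfy that relation). Also, the intersection of $\prod_{v\in S-S_\infty}H^0$ with $\mathrm{im}\,\Psi^0$ inside $\prod_{v\in S}H^0$ is $\ker\beta/\ker\Psi^0$, not $\ker\beta$ modulo something. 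The paper avoids this tangle by running a second kernel--cokernel sequence, now for the factorisation $\beta=\pi\circ\Psi^0$ with $\pi:\prod_{v\in S}H^0\to\prod_{v\in S_\infty}H^0$ the projection, giving
\[
0\to\ker\Psi^0\to\ker\beta\to\prod_{v\in S-S_\infty}H^0(K_v,\mathcal{F}_v)\to\frac{\prod_{v\in S}H^0(K_v,\mathcal{F}_v)}{H^0_{et}(U,\mathcal{F})}\to\frac{\prod_{v\in S_\infty}H^0(K_v,\mathcal{F}_v)}{H^0_{et}(U,\mathcal{F})}\to 0,
\]
applies Proposition~\ref{det_tor} to this five-term sequence, and then separately invokes Lemma~\ref{torsion_group} on the four-term sequence $0\to\ker\Psi^0\to H^0_{et}\to\prod_{v\in S}H^0\to\prod_{v\in S}H^0/H^0_{et}\to 0$ to trade $[\ker\Psi^0]$ and $[(\prod_{v\in S}H^0/H^0_{et})_{tor}]$ for $[H^0_{et,tor}]$, $\prod_{v\in S}[H^0_{tor}]$ and $[\mathrm{cok}(\delta(\mathcal{F})_{tor})]$. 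That two-step substitution is exactly the bookkeeping you flag as delicate, and it cannot be compressed into a single ``four-term'' sequence as you suggest.
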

 
 \begin{proof}
 We consider the following composition of maps
\[  \xymatrixrowsep{0.6in}\xymatrix{  H^0_c(U,\mathcal{F}) \ar[r]^{\alpha} & 
H^0_{et}(U,\mathcal{F}) \ar[r]^{\Psi^0(\mathcal{F})} \ar@/^3pc/[rr]^-{\beta} &
\prod_{v\in S}H^0(K_v,\mathcal{F}_v) \ar[r]^{\pi}  &
 \prod_{v\in S_{\infty}}H^0(K_v,\mathcal{F}_v).
}
\] 
 From \eqref{compact_coh} and \eqref{Hn_dF_seq1}, 
 $\ker(\alpha)\simeq \prod_{v\in S_{\infty}} H^{-1}_T(K_v,\mathcal{F}_v)$ and $\ker(\beta\circ\alpha)\simeq H^0(D_{\mathcal{F}})$. Therefore, from the kernel-cokernel exact sequence
 \begin{multline}\label{eqn_v3v4_1}
 0 \to \prod_{v\in S_{\infty}} H^{-1}_T(K_v,\mathcal{F}_v) \to H^0(D_{\mathcal{F}})
 \to \ker(\beta) \to \frac{H^0_{et}(U,\mathcal{F})}{H^0_{c}(U,\mathcal{F})} \to \\
\to \frac{\prod_{v\in S_{\infty}}H^0(K_v,\mathcal{F}_v)}{H^0_{c}(U,\mathcal{F})} \to
 \frac{\prod_{v\in S_{\infty}}H^0(K_v,\mathcal{F}_v)}{H^0_{et}(U,\mathcal{F})} \to 0.
 \end{multline}
 Applying Proposition \ref{det_tor} to \eqref{eqn_v3v4_1} yields the formula for $\nu(\mathcal{E}_3)$. Again from the kernel-cokernel exact sequence,
 \begin{equation}\label{eqn_v3v4_2}
 0 \to \ker(\Psi^0(\mathcal{F})) \to \ker(\beta) \to 
 \prod_{v\in S-S_{\infty}}H^0(K_v,\mathcal{F}_v) \to 
 \frac{\prod_{v\in S}H^0(K_v,\mathcal{F}_v)}{H^0_{et}(U,\mathcal{F})} \to
 \frac{\prod_{v\in S_{\infty}}H^0(K_v,\mathcal{F}_v)}{H^0_{et}(U,\mathcal{F})} \to 0.
 \end{equation}
 Applying Proposition \ref{det_tor} to \eqref{eqn_v3v4_2} yields 
 \begin{equation}\label{eqn_v3v4_3}
 \nu(\mathcal{E}_4)= \frac{[\ker(\Psi^0_U(\mathcal{F}))]
\prod_{v\in S-S_{\infty}}[H^0(K_v,\mathcal{F}_v)_{tor}]
\left[\left(\frac{\prod_{v\in S_{\infty}}H^0(K_v,\mathcal{F}_v)}{H^0_{et}(U,\mathcal{F})}\right)_{tor}\right]}
{[\ker(\beta)]\left[\left(\frac{\prod_{v\in S}H^0(K_v,\mathcal{F}_v)}{H^0_{et}(U,\mathcal{F})}\right)_{tor}\right]}.
 \end{equation}
  Consider the exact sequence
 \begin{equation}\label{eqn_v3v4_4}
 0 \to \ker(\Psi^0(\mathcal{F})) \to H^0_{et}(U,\mathcal{F})
 \xrightarrow{\Psi^0(\mathcal{F})} \prod_{v\in S}H^0(K_v,\mathcal{F}_v) 
 \xrightarrow{\delta(\mathcal{F})}
 \frac{\prod_{v\in S}H^0(K_v,\mathcal{F}_v)}{H^0_{et}(U,\mathcal{F})} \to 0.
 \end{equation}
 Applying Lemma \ref{torsion_group} to \eqref{eqn_v3v4_4} yields
 \begin{equation}\label{eqn_v3v4_5}
 \frac{[\ker(\Psi^0(\mathcal{F}))]}{\left[\left( \frac{\prod_{v\in S_{\infty}}H^0(K_v,\mathcal{F}_v)}{H^0_{et}(U,\mathcal{F})}\right)_{tor}\right]}
 = \frac{[H^0_{et}(U,\mathcal{F})_{tor}]}
 {\prod_{v\in S}[H^0(K_v,\mathcal{F}_v)_{tor}][\mathrm{cok}(\delta(\mathcal{F})_{tor})]}.
 \end{equation}
 Combining \eqref{eqn_v3v4_3} and \eqref{eqn_v3v4_5} gives the formula for $\nu(\mathcal{E}_4)$.
 \end{proof}
 
\begin{proof}[Proof of Theorem \ref{thm_chi1_formula}]
From Lemmas \ref{lemma_v1v2} and \ref{lemma_v3v4}, 
\begin{equation}
\det(\Theta_1)=\frac{[H^1(D_{\mathcal{F}})_{tor}][H^0_{et}(U,\mathcal{F})_{tor}][Q_1]\prod_{v\in S_{\infty}}h(G_v,\mathcal{F}_v)^{-1}}
{[H^0(D_{\mathcal{F}})_{tor}]\prod_{v\in S}[H^0(K_v,\mathcal{F}_v)_{tor}][\mathrm{cok}(\delta(\mathcal{F})_{tor})][\ker(\Psi^1(\mathcal{F}))]}.
\end{equation}
Note that 
\[ [Q_1][Q_2]=\prod_{v\in S_{\infty}}[H^1(K_v,\mathcal{F}_v)]=[H^5(D_{\mathcal{F}})]
\prod_{v\in S_{\infty}}h(G_v,\mathcal{F}_v)^{-1}.\]
Then, putting everything together, we obtain \eqref{eqn_thm_chi1}.
\end{proof}
 
\subsection{Simple Computations}
As examples, we will compute a few Euler characteristics in this section. 
\begin{prop}\label{Z_Euler}
Let $h_S$, $R_S$ and $w$ be the $S$-class number, the $S$-regulator and the number of roots of unity of $K$ respectively. Then
$\chi_U(\mathbb{Z})={h_SR_S}/{w} $.
In particular, $\zeta_{K,S}^{*}(0)=-\chi(\mathbb{Z})$.
\end{prop}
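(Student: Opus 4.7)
The plan is to apply the Euler characteristic formula of Theorem \ref{thm_chi1_formula} directly to $\mathcal{F}=\mathbb{Z}$, and then deduce the analytic statement from the classical Dirichlet/analytic class number formula for $\zeta_{K,S}$ at $s=0$. Note that $\mathbb{Z}$ is strongly-$\mathbb{Z}$-constructible by the first item of Example following Definition $\ref{strong_const}$, so the formula applies.

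I would identify each term of \eqref{eqn_thm_chi1} in turn. First, $\operatorname{Hom}_U(\mathbb{Z},\mathbb{G}_m)=\mathbb{G}_m(U)=O_{K,S}^{*}$, whose torsion subgroup is $\mu_K$; hence the dual $(\operatorname{Hom}_U(\mathbb{Z},\mathbb{G}_m)_{tor})^D$ has order $w$. Next, $\operatorname{Ext}^1_U(\mathbb{Z},\mathbb{G}_m)=H^1_{et}(U,\mathbb{G}_m)=\operatorname{Pic}(O_{K,S})$, which has order $h_S$. By Example $\ref{ex_pairing}(2)$, the regulator $R(\mathbb{Z})$ equals $R_S$. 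The group $H^0_{et}(U,\mathbb{Z})=\mathbb{Z}$ is torsion-free, and each $H^0(K_v,\mathbb{Z})=\mathbb{Z}$ is also torsion-free, so the corresponding factors contribute $1$. For $\ker(\Psi^1(\mathbb{Z}))$, I would use that $H^1_{et}(U,\mathbb{Z})=\operatorname{Hom}_{\mathrm{cont}}(\pi_1^{et}(U),\mathbb{Z})=0$ since the profinite group $\pi_1^{et}(U)$ admits no nontrivial continuous homomorphism to the discrete group $\mathbb{Z}$; this forces $\ker(\Psi^1(\mathbb{Z}))=0$. Finally, $\delta(\mathbb{Z})$ is the surjection $\prod_{v\in S}\mathbb{Z}\twoheadrightarrow\prod_{v\in S}\mathbb{Z}/\mathbb{Z}$, a map between torsion-free groups, so $\delta(\mathbb{Z})_{tor}$ is the zero map $0\to 0$ and its cokernel has order $1$. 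Substituting into \eqref{eqn_thm_chi1} gives
\[ \chi_U(\mathbb{Z}) = \frac{1\cdot h_S\cdot R_S}{w\cdot 1\cdot 1\cdot 1} = \frac{h_S R_S}{w}. \]

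For the ``in particular'' statement, I would invoke the classical analytic class number formula at $s=0$: $\zeta_{K,S}(s)$ vanishes to order $|S|-1$ at $s=0$ and
\[ \zeta_{K,S}^{*}(0) = -\frac{h_S R_S}{w}, \]
which, combined with the computation above, gives $\zeta_{K,S}^{*}(0)=-\chi_U(\mathbb{Z})$. Strictly speaking, there is no real obstacle here; the main care needed is bookkeeping to verify that each group appearing in the denominator of Theorem $\ref{thm_chi1_formula}$ is correctly identified and that the vanishing $H^1_{et}(U,\mathbb{Z})=0$ is justified. Once those identifications are in place, the proof is essentially a one-line substitution.
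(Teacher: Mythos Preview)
Your proof is correct and follows essentially the same approach as the paper: both apply Theorem~\ref{thm_chi1_formula} to $\mathcal{F}=\mathbb{Z}$, identify each factor ($\operatorname{Ext}^1_U(\mathbb{Z},\mathbb{G}_m)=\operatorname{Pic}(O_{K,S})$, $R(\mathbb{Z})=R_S$, $H^1_{et}(U,\mathbb{Z})=0$, torsion-freeness of the $H^0$ terms, triviality of $\operatorname{cok}(\delta(\mathbb{Z})_{tor})$), and then invoke the classical formula for $\zeta_{K,S}^{*}(0)$ (the paper cites \cite[\mbox{I.2.2}]{Tat84}). Your justification of $H^1_{et}(U,\mathbb{Z})=0$ via $\pi_1^{et}(U)$ is a bit more explicit than the paper's bare assertion, but the argument is otherwise the same.
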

\begin{proof}
 We have $H^1_{et}(U,\mathbb{Z})=0$ and $[Ext^1_U(\mathbb{Z},\mathbb{G}_m)]=[Pic(O_{K,S})]=h_S$. Clearly,  $\mathrm{cok}(\delta(\mathbb{Z})_{tor})=0$ and $\prod_{v\in S}H^0(K_v,\mathbb{Z})$ and $H^0_{et}(U,\mathbb{Z})$ are torsion-free.
In addition, $R(\mathbb{Z})=R_S$ and $[(O_{K,S}^{*})_{tor}]=w$. As a result,
${\chi}_U(\mathbb{Z})={h_SR_S}/{w}$.
Therefore, $\zeta_{K,S}^{*}(0) = -\chi_U(\mathbb{Z}) $ by \cite[\mbox{I.2.2}]{Tat84}.
\end{proof}
\begin{prop}\label{Zn_Euler}
Euler characteristics of finite constant sheaves are 1.
\end{prop}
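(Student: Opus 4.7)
Plan: I first check that any finite constant sheaf $A$ on $U$ is strongly-$\mathbb{Z}$-constructible. Conditions 1 and 2 of Definition \ref{strong_const} are immediate: $H^0_{et}(U,A)\hookrightarrow A$ is finite, while $H^1_{et}(U,A)$ and $Ext^1_U(A,\mathbb{G}_m)$ are finite by standard arguments in \cite{Mil06}. Condition 3 holds vacuously, because both sides of the regulator pairing are finite (hence torsion). Thus the formula of Theorem \ref{thm_chi1_formula} applies.

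Within that formula I expect the following simplifications: $R(A)=1$ since the pairing is trivial on torsion groups; $[\mathrm{cok}(\delta(A)_{tor})]=1$ because $\delta(A)$ is the surjection onto a torsion quotient; and the factor $\prod_{v\in S_\infty}h(G_v,A)^3=1$ built into Definition \ref{euler_defn1}, since the Herbrand quotient of a finite module over a cyclic group is $1$. Combining these with the identities $[H^0_{et}(U,A)_{tor}]=[A]$ and $\prod_{v\in S}[H^0(K_v,A)_{tor}]=[A]^{|S|}$, and applying Artin-Verdier duality \cite[II.3.1]{Mil06} in the form $[Ext^1_U(A,\mathbb{G}_m)]=[H^2_c(U,A)]$ and $[Hom_U(A,\mathbb{G}_m)^D_{tor}]=[H^3_c(U,A)]$, Theorem \ref{thm_chi1_formula} reduces the problem to the arithmetic identity
\[
[A]\cdot[H^2_c(U,A)] \;=\; [H^3_c(U,A)]\cdot[\ker\Psi^1(A)]\cdot[A]^{|S|}.
\]

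To establish this identity I would take alternating products of cardinalities in the compact-support long exact sequence \eqref{compact_coh}, which is a long exact sequence of \emph{finite} groups since $A$ is constructible. Using $H^0_{et}(U,A)=A$ and the local descriptions of $H^n(K_v,A)$ (Galois cohomology away from $S_\infty$ and Tate cohomology at $S_\infty$), the cancellation yields precisely the displayed identity, with $\ker\Psi^1(A)$ appearing as the natural discrepancy term at the middle of the sequence. The main technical obstacle is the careful bookkeeping of orders: the compact-support sequence uses \emph{Tate} cohomology at archimedean places (giving a periodic tail in degrees $\geq 4$), while the formula of Theorem \ref{thm_chi1_formula} uses \emph{Galois} cohomology in the denominator term $\prod_{v\in S}[H^0(K_v,A)_{tor}]$. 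Reconciling these will rely on the fact that at each real place the Tate/Galois discrepancy in degree $0$ is measured exactly by Herbrand quotients, which all equal $1$ here; this is what makes the cubed Herbrand-quotient correction in Definition \ref{euler_defn1} absorb the contribution of the periodic tail beyond the finite range $n=0,\ldots,5$.
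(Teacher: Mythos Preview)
Your approach is correct but differs from the paper's proof of Proposition~\ref{Zn_Euler}. The paper reduces to $\mathcal{F}=\mathbb{Z}/n$ and computes each factor in \eqref{eqn_thm_chi1} by hand: $Hom_U(\mathbb{Z}/n,\mathbb{G}_m)\simeq\mu_n(K)$, an exact sequence expressing $Ext^1_U(\mathbb{Z}/n,\mathbb{G}_m)$ in terms of $O_{K,S}^*/(O_{K,S}^*)^n$ and $Pic(O_{K,S})[n]$, and a diagram chase giving $[\ker\Psi^1(\mathbb{Z}/n)]=[Pic(O_{K,S})[n]]$; Dirichlet's Unit Theorem then forces all terms to cancel.

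What you outline is instead the paper's proof of the \emph{more general} Proposition~\ref{Euler_constructible}. After Artin--Verdier duality, \eqref{eqn_thm_chi1} becomes $\chi_U(A)=[H^0_{et}][H^2_c]\big/\bigl([H^3_c][\ker\Psi^1]\prod_{v\in S}[H^0(K_v,A)]\bigr)$, and one feeds in the finite segment of \eqref{compact_coh} running from $\prod_{v\in S_\infty}H^{-1}_T$ through $\ker\Psi^1$, using $h(G_v,A)=1$ to cancel $[H^{-1}_T]$ against $[H^0_T]$. This rewrites $\chi_U(A)$ as $[H^0_c][H^2_c]\big/\bigl([H^1_c][H^3_c]\prod_{v\in S_\infty}[H^0(K_v,A)]\bigr)$. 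The decisive last step, which you do not name explicitly, is Milne's compact-support Euler-characteristic formula \cite[II.2.13]{Mil06}, which asserts that this quantity equals~$1$ for any constructible sheaf. Without invoking that result (or reproving it), the alternating-product bookkeeping alone does not close the argument.

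One correction to your exposition: the factor $\prod_{v\in S_\infty}h(G_v,\mathcal{F}_v)^3$ of Definition~\ref{euler_defn1} has already been absorbed in deriving Theorem~\ref{thm_chi1_formula}, so there is no ``periodic tail beyond $n=0,\ldots,5$'' left for it to handle once you are working with \eqref{eqn_thm_chi1}. The only role of $h(G_v,A)=1$ in your argument is the modest cancellation of $[H^{-1}_T(K_v,A)]$ against $[H^0_T(K_v,A)]$ in the truncated segment, converting Tate cohomology to ordinary cohomology at the archimedean places.
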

\begin{proof}
 It suffices to prove this proposition for the constant sheaf $\mathbb{Z}/n$. For an abelian group $M$, we write $M[n]$ for the kernel of the multiplication-by-$n$ map.
We have $Hom_U(\mathbb{Z}/n,\mathbb{G}_m)\simeq \mu_n(K)$ and 
\[ 0 \to O_K^{*}/(O_K^{*})^{n}  \to Ext^1_U(\mathbb{Z}/n,\mathbb{G}_m) \to Pic(O_K)[n] \to 0 .\]
Observe that $[H^0_{et}(U,\mathbb{Z}/n)]=n$, $[H^0_{et}(U,(\mathbb{Z}/n)_S)]=n^{[S]}$ and $R(\mathbb{Z}/n)=1$. From Dirichlet's Unit Theorem, 
$[O_{K,S}^{*}/(O_{K,S}^{*})^{n}]=n^{[S]-1}\times [\mu(K)/\mu(K)^n]$. Also, $\mathrm{cok}(\delta(\mathbb{Z}/n)_{tor})=0$.  From \eqref{eqn_thm_chi1}, it remains to show 
$[\ker(\Psi^1(\mathbb{Z}/n))]=[Pic(O_{K,S})[n]]$. Indeed, we have the commutative diagram
\begin{equation*}
\xymatrix{
         &  &  0 \ar[d] & 0 \ar[d] \\
         &  & H^1_{et}(U,\mathbb{Z}/n) \ar[r]^{\Psi^1(\mathbb{Z}/n)} \ar[d] & \prod_{v\in S}H^1(K_v,\mathbb{Z}/n) \ar[d] \\
0 \ar[r] & H^2_c(U,\mathbb{Z}) \ar[r] \ar[d]^{n} & H^2_{et}(U,\mathbb{Z}) \ar[r]^{\Psi^2(\mathbb{Z})} \ar[d]^{n}
& \prod_{v\in S}H^2(K_v,\mathbb{Z}) \ar[d]^{n} \\
0\ar[r]  & H^2_c(U,\mathbb{Z})\ar[r]  & H^2_{et}(U,\mathbb{Z}) \ar[r]^{\Psi^2(\mathbb{Z})} 
& \prod_{v\in S}H^2(K_v,\mathbb{Z})  
}
\end{equation*}
By diagram chasing, $\ker(\Psi^1(\mathbb{Z}/n))\simeq H^2_c(U,\mathbb{Z})[n]$ which is isomorphic to $(Pic(O_{K,S})/n)^D$ by Artin-Verdier Duality. Hence, we are done.
\end{proof}
\begin{defn}\label{defn_negligible}
We say $\mathcal{F}$ is a negligible sheaf on $U$ if it has finite support and its stalks are finite everywhere. Note that negligible sheaves are constructible.
\end{defn}
\begin{prop}\label{Euler_negligible}
Euler characteristics of negligible sheaves are 1.
\end{prop}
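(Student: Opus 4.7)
The plan is to unwind Definition \ref{euler_defn1} directly, using the vanishing conditions coming from negligibility to collapse most of the terms. Since $\mathcal{F}$ is supported at finitely many closed points of $U$, its generic stalk vanishes, so $\mathcal{F}_v=(j_v)^*\mathcal{F}=0$ for every place $v$ of $K$; consequently $\mathcal{F}_S=0$, and all local cohomology groups $H^n(K_v,\mathcal{F}_v)$ and Tate cohomology groups $H^n_T(K_v,\mathcal{F}_v)$ appearing in the Weil-\'etale machinery vanish.

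These vanishings trivialize nearly every factor in $\chi_U$. The Herbrand quotients $h(G_v,\mathcal{F}_v)$ for $v\in S_\infty$ are all $1$; the regulator pairing has both sides zero, so $R(\mathcal{F})=1$; since $\mathcal{F}$ is constructible all $H^n_c(U,\mathcal{F})$ are finite and $H^n_W(U,\mathcal{F})=H^n_c(U,\mathcal{F})$; and the skyscraper structure forces $\mathrm{Hom}_U(\mathcal{F},\mathbb{G}_m)=0$ (via the local purity identification $i_v^!\mathbb{G}_m\simeq\mathbb{Z}[-1]$). Hence $\Theta(\mathcal{F})$ is a map between zero real vector spaces, giving $|\det\Theta(\mathcal{F})|=1$, and the long exact sequence of Proposition \ref{Hn_dF} collapses to $H^n(D_\mathcal{F})\simeq H^n_c(U,\mathcal{F})$ for $n=0,1,2$ and $H^n(D_\mathcal{F})=0$ otherwise. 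Combined with \eqref{compact_coh}, which now degenerates to $H^n_c(U,\mathcal{F})\simeq H^n_{et}(U,\mathcal{F})$, the Euler characteristic reduces to the alternating product
\[ \chi_U(\mathcal{F})=\prod_{n}[H^n_{et}(U,\mathcal{F})]^{(-1)^n}. \]

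To finish, decompose $\mathcal{F}$ as a finite direct sum of skyscrapers $i_{v,*}M_v$ at closed points $v\notin S$, with $M_v$ a finite discrete module over $G_v\simeq\hat{\mathbb{Z}}$. Then $H^n_{et}(U,i_{v,*}M_v)\simeq H^n(G_v,M_v)$ equals $M_v^{G_v}$ in degree $0$, $(M_v)_{G_v}$ in degree $1$, and vanishes in higher degrees. Applying the equality $|\ker f|=|\mathrm{coker}\,f|$ to the endomorphism $f=\mathrm{Frob}_v-1$ of the finite abelian group $M_v$ yields $|M_v^{G_v}|=|(M_v)_{G_v}|$, so each summand contributes $1$ and thus $\chi_U(\mathcal{F})=1$. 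The main obstacle is largely bookkeeping: one must verify carefully that every ingredient of Definition \ref{euler_defn1} collapses as claimed, in particular the vanishing of $\mathrm{Hom}_U(\mathcal{F},\mathbb{G}_m)$ and the reduction of $H^n(D_\mathcal{F})$ to $H^n_c(U,\mathcal{F})$, after which the classical identity $|M^G|=|M_G|$ for finite $\hat{\mathbb{Z}}$-modules closes the argument.
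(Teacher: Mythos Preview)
Your proof is correct and follows essentially the same approach as the paper's: reduce to a single skyscraper $i_*M$, observe that all local pullbacks $\mathcal{F}_v$ vanish so every archimedean and $S$-local contribution disappears, and finish with the identity $[H^0(\hat{\mathbb{Z}},M)]=[H^1(\hat{\mathbb{Z}},M)]$ for finite $M$. The only difference is presentational: the paper tacitly plugs these vanishings into the closed formula \eqref{eqn_thm_chi1}, whereas you unwind Definition~\ref{euler_defn1} directly through $H^n(D_\mathcal{F})$, which has the minor advantage of not depending on Theorem~\ref{thm_chi1_formula}.
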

\begin{proof} 
It is enough to prove this lemma for the sheaf $i_{*}M$ where $M$ is a finite $\hat{\mathbb{Z}}$-module. 
We have $H^{n}_{et}(U,i_{*}M) \simeq H^{n}(\hat{\mathbb{Z}},M)$ which is 0 for $n \geq 2$ 
$\cite[\mbox{page 189}]{Ser95}$. Moreover, $[H^0(\hat{\mathbb{Z}},M)]=[H^1(\hat{\mathbb{Z}},M)]$ for finite $M$  
$\cite[\mbox{page 32}]{Mil06}$. For any place $v$ and any $n$, $H^n(K_v,(i_{*}M)_v)=0$. Therefore ${\chi}_U(i_{*}M)=1$.
\end{proof}
Propositions \ref{Zn_Euler} and \ref{Euler_negligible} are special cases of the following proposition.
\begin{prop}\label{Euler_constructible}
Euler characteristics of constructible sheaves are 1. 
\end{prop}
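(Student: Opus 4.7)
The approach is to combine multiplicativity of $\chi_U$ on short exact sequences of constructible sheaves with the two special cases already established---finite constant sheaves (Proposition \ref{Zn_Euler}) and negligible sheaves (Proposition \ref{Euler_negligible})---together with the pushforward identity $\chi_V(\mathcal{F})=\chi_U(\pi'_{*}\mathcal{F})$ of Proposition \ref{finite_inv}.

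The first step is to establish multiplicativity: for a short exact sequence $0\to\mathcal{F}_1\to\mathcal{F}_2\to\mathcal{F}_3\to 0$ of constructible sheaves on $U$, $\chi_U(\mathcal{F}_2)=\chi_U(\mathcal{F}_1)\chi_U(\mathcal{F}_3)$. For constructible $\mathcal{F}_i$, each $Ext^1_U(\mathcal{F}_i,\mathbb{G}_m)$ is finite via Artin--Verdier duality, so Proposition \ref{long_exact_Weil} gives a long exact sequence of the Weil-\'etale cohomology groups, all of which are finite. Combining this with the long exact sequence $(\ref{compact_coh})$ of compactly supported cohomology and the local archimedean sequences, the defining triangle of $D_{\mathcal{F}}$ produces a long exact sequence of the finite groups $H^n(D_{\mathcal{F}_i})$. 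Since the alternating product of orders in a long exact sequence of finite abelian groups equals $1$, and since the Herbrand quotients $h(G_v,\mathcal{F}_v)$ are multiplicative in short exact sequences, multiplicativity of $\chi_U$ follows from Definition \ref{euler_defn1}.

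Given multiplicativity, the second step reduces a general constructible $\mathcal{F}$ to the case $\mathcal{F}=j_{*}M$ for a finite $G_K$-module $M$. Pick an open dense $V\subset U$ such that $\mathcal{F}|_V$ is finite locally constant, and let $i:Z\to U$ be the closed complement. The exact sequence
\[ 0 \to j_{!}j^{*}\mathcal{F} \to \mathcal{F} \to i_{*}i^{*}\mathcal{F} \to 0, \]
multiplicativity, and Proposition \ref{Euler_negligible} let us replace $\mathcal{F}$ by $j_{!}j^{*}\mathcal{F}$; the further sequence $0\to j_{!}j^{*}\mathcal{F}\to j_{*}j^{*}\mathcal{F}\to (\mathrm{skyscraper})\to 0$ then lets us replace $j_{!}j^{*}\mathcal{F}$ by $j_{*}M$, where $M$ is the finite $G_K$-module giving the generic stalk.

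The third step handles $\chi_U(j_{*}M)$ for $M$ finite. Fix a finite Galois extension $L/K$ such that $G_L$ acts trivially on $M$, and set $G=\mathrm{Gal}(L/K)$. Using Artin's induction theorem applied to $G$, together with a $\mathbb{Z}[G]$-presentation of $M$, I would produce an identity of the form $n\cdot[j_{*}M]=\sum_{i} n_{i}[(\pi'_{L_i})_{*}(\mathbb{Z}/m_i)]$ modulo negligibles, in the Grothendieck group of constructible sheaves, where the $L_i/K$ are subfields of $L$ with associated normalizations $\pi'_{L_i}:V_{L_i}\to U$. By multiplicativity together with Propositions \ref{finite_inv} and \ref{Zn_Euler}, each factor $\chi_U((\pi'_{L_i})_{*}(\mathbb{Z}/m_i))=\chi_{V_{L_i}}(\mathbb{Z}/m_i)=1$, and by Proposition \ref{Euler_negligible} the negligible contributions are $1$, so $\chi_U(j_{*}M)^{n}=1$ and hence $\chi_U(j_{*}M)=1$. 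The main obstacle lies in this last step: unlike the torsion-free case of Proposition \ref{ono_etale}, finite $\mathbb{Z}[G]$-modules typically do not admit finite-length resolutions by permutation modules, so invoking Artin induction only yields the relation after multiplication by some nonzero integer $n$, and one must carefully track the negligible defects that appear when sheafifying such a virtual identity from the category of finite $G_K$-modules to sheaves on $U$.
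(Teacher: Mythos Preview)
Your approach is genuinely different from the paper's, and considerably more roundabout. The paper gives a direct three-line computation: plug the constructible $\mathcal{F}$ into the explicit formula of Theorem~\ref{thm_chi1_formula} (where all groups are finite and $R(\mathcal{F})=1$, $\mathrm{cok}(\delta(\mathcal{F})_{tor})=0$), use Artin--Verdier duality to identify $Ext^1_U(\mathcal{F},\mathbb{G}_m)\simeq H^2_c(U,\mathcal{F})^D$ and $Hom_U(\mathcal{F},\mathbb{G}_m)\simeq H^3_c(U,\mathcal{F})^D$, and then use the compact-support exact sequence~\eqref{compact_coh} in degrees $-1$ through $1$ to rewrite $\chi_U(\mathcal{F})$ as
\[
\frac{[H^0_c(U,\mathcal{F})][H^2_c(U,\mathcal{F})]}{[H^1_c(U,\mathcal{F})][H^3_c(U,\mathcal{F})]\prod_{v\in S_\infty}[H^0(K_v,\mathcal{F}_v)]},
\]
which equals $1$ by Milne's compact-support Euler characteristic formula \cite[II.2.13]{Mil06}. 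No multiplicativity, no d\'evissage, no induction is needed.

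Your route has two soft spots. First, your Step~1 assertion that one gets a long exact sequence of the $H^n(D_{\mathcal{F}_i})$ is not immediate from the definitions: $R\Gamma_W$ is defined as a direct sum of truncations and does not a priori sit in distinguished triangles, which is exactly why the paper treats multiplicativity separately in Section~4.3 and only proves it under extra hypotheses. For constructible sheaves one can salvage this (since $H^n_W=H^n_c$ and $R\Gamma_c$ is genuinely functorial), but you would still need to bookkeep the infinite $2$-periodic tail against the Herbrand-quotient factor carefully. Second, and more seriously, your Step~3 gap is real: finite $\mathbb{Z}[G]$-modules do not admit finite resolutions by permutation modules in general, and Artin induction lives in the rational representation ring, so producing a virtual identity $n[j_*M]=\sum n_i[(\pi'_{L_i})_*(\mathbb{Z}/m_i)]$ modulo negligibles on the level of constructible sheaves requires an argument you have not supplied. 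The paper's direct computation sidesteps all of this.
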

\begin{proof}
Let $\mathcal{F}$ be a constructible sheaf on $U$. By \eqref{eqn_thm_chi1} and the Artin-Verdier Duality, we have 
\begin{equation}
{\chi}_U(\mathcal{F})=
\frac{[H^0_{et}(U,\mathcal{F})][H^2_c(U,\mathcal{F})]}
{[H^3_c(U,\mathcal{F})][\ker(\Psi^1(\mathcal{F}))]\prod_{v\in S}[H^0(K_v,\mathcal{F}_v)]} .
\end{equation}
From the exact sequence \eqref{compact_coh}
\begin{multline*}
0 \to \prod_{v\in S_{\infty}}H^{-1}_T(K_v,\mathcal{F}_v) \to H^0_{c}(U,\mathcal{F})
\to H^0_{et}(U,\mathcal{F}) \to \\
\to  \prod_{v\in S_{\infty}}H^{0}_T(K_v,\mathcal{F}_v)\oplus \prod_{v\in S-S_{\infty}}H^{0}(K_v,\mathcal{F}_v) \to H^1_{c}(U,\mathcal{F}) \to \ker(\Psi^1(\mathcal{F})) \to 0,
\end{multline*}
$\chi_U(\mathcal{F})$ can be rewritten as
\[
{\chi}_U(\mathcal{F})=
\frac{[H^0_{c}(U,\mathcal{F})][H^2_c(U,\mathcal{F})]}
{[H^1_c(U,\mathcal{F})][H^3_c(U,\mathcal{F})]\prod_{v\in S_{\infty}}[H^0(K_v,\mathcal{F}_v)]} =1\]
where the last equality follows from \cite[\mbox{II.2.13}]{Mil06}.
\end{proof}

\begin{prop}\label{euler_finite_inv}
Let $L$ be a finite Galois extension of $K$. Let $V=Spec(O_{L,S'})$ be the normalization of $U$ in $L$ and $\pi' : V \to U $ be the natural map. Let $\mathcal{F}$ be a strongly-$\mathbb{Z}$-constructible sheaf on $V$. Then $\pi'_{*}\mathcal{F}$ is a strongly-$\mathbb{Z}$-constructible sheaf on $U$. Moreover, $H^n(D_{\pi'_{*}\mathcal{F}}) \simeq H^n(D_{\mathcal{F}})$ and ${\chi}_U(\pi'_{*}\mathcal{F})={\chi}_V(\mathcal{F})$.
\end{prop}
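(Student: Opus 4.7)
The plan proceeds in three steps corresponding to the three assertions. The first, that $\pi'_*\mathcal{F}$ is strongly-$\mathbb{Z}$-constructible, is exactly Proposition \ref{finite_inv}, so I would simply invoke it and concentrate on the remaining two claims.

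For the isomorphism $H^n(D_{\pi'_*\mathcal{F}}) \simeq H^n(D_\mathcal{F})$, my approach is to produce a morphism between the two defining distinguished triangles
\[ D_{\pi'_*\mathcal{F}} \to R\Gamma_W(U,\pi'_*\mathcal{F}) \to \prod_{v \in S_\infty} S^\circ((\pi'_*\mathcal{F})_v) \to D_{\pi'_*\mathcal{F}}[1] \]
and
\[ D_\mathcal{F} \to R\Gamma_W(V,\mathcal{F}) \to \prod_{w \in S'_\infty} S^\circ(\mathcal{F}_w) \to D_\mathcal{F}[1]. \]
On the middle term I would use Lemma \ref{pre_finite_inv}, parts (1) and (2), enhanced at the derived level via the norm map to give a quasi-isomorphism $R\Gamma_W(U,\pi'_*\mathcal{F}) \simeq R\Gamma_W(V,\mathcal{F})$. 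On the right term I would apply Lemma \ref{Mackey}, which yields $(\pi'_*\mathcal{F})_v \simeq \prod_{w\mid v}\mathrm{Ind}_{G_{L_w}}^{G_{K_v}}\mathcal{F}_w$, and then invoke Shapiro's lemma to produce a quasi-isomorphism $S^\circ((\pi'_*\mathcal{F})_v) \simeq \prod_{w\mid v} S^\circ(\mathcal{F}_w)$, so the products over $S_\infty^K$ and $S'_\infty$ agree. Commutativity of the resulting square reduces to the naturality of the natural transformation $R\Gamma_c \to \prod S^\circ$ under the exact push-forward $\pi'_*$, and the five lemma then forces $D_{\pi'_*\mathcal{F}} \simeq D_\mathcal{F}$.

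For $\chi_U(\pi'_*\mathcal{F}) = \chi_V(\mathcal{F})$ I would compare Definition \ref{euler_defn1} factor-by-factor. The torsion contributions $\prod_n [H^n(D_?)_{tor}]^{(-1)^n}$ coincide by the previous step. The archimedean contributions agree because Shapiro's lemma together with multiplicativity of the Herbrand quotient gives $h(G_v,(\pi'_*\mathcal{F})_v)=\prod_{w\mid v}h(G_{L_w},\mathcal{F}_w)$, so the two products match. For $|\det\Theta(\pi'_*\mathcal{F})|=|\det\Theta(\mathcal{F})|$ I would use the decomposition $\Theta=\Theta_2^{-1}\Theta_3\Theta_1$ from Proposition \ref{prop_theta_iso}: $\Theta_3$ is controlled by the regulator equality $R(\pi'_*\mathcal{F})=R(\mathcal{F})$ proved in Proposition \ref{finite_inv}; $\Theta_2$ is controlled by the norm isomorphism on $\mathrm{Ext}^n_U$ from Lemma \ref{pre_finite_inv}(1); and $\Theta_1$ is controlled by the naturality of the exact sequences $(\mathcal{E}_0),\ldots,(\mathcal{E}_4)$ under $\pi'_*$, using Lemma \ref{pre_finite_inv}(3) to identify $H^0_{et}(U,(\pi'_*\mathcal{F})_S)$ with $H^0_{et}(V,\mathcal{F}_{S'})$ and the standard isomorphism $H^n_c(U,\pi'_*\mathcal{F})\simeq H^n_c(V,\mathcal{F})$ for the compactly supported factor.

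The main obstacle will be the last point: each $\Theta_i$ depends on non-canonical choices of splittings, and I must verify that these splittings on the $U$-side can be chosen compatibly with those on the $V$-side so that the determinants agree exactly rather than merely up to a finite-index ambiguity. The commutative diagram \eqref{finite_inv_eq3} from the proof of Proposition \ref{finite_inv} should provide the essential compatibility, but each of the exact sequences $(\mathcal{E}_0),\ldots,(\mathcal{E}_4)$ must be traced through under $\pi'_*$ to confirm that the induced maps on integral bases really do yield equal determinants.
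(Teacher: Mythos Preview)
Your approach to the first two assertions matches the paper's: invoke Proposition~\ref{finite_inv} for strong-$\mathbb{Z}$-constructibility, then compare the two defining triangles and apply the five-lemma (the paper does this directly on cohomology via the long exact sequence of Proposition~\ref{Hn_dF}, but the content is the same).

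For $\chi_U(\pi'_*\mathcal{F})=\chi_V(\mathcal{F})$ you take a genuinely different route. You work from Definition~\ref{euler_defn1} and attempt to match the factors $\Theta_1,\Theta_2,\Theta_3$ and the Herbrand quotients individually, which forces you to confront the non-canonical splittings in the construction of $\Theta$. The paper sidesteps this entirely: it has already computed $\chi_U(\mathcal{F})$ in closed form as formula~\eqref{eqn_thm_chi1} in Theorem~\ref{thm_chi1_formula}, and that formula involves only the invariants $[H^0_{et}(U,\mathcal{F})_{tor}]$, $[Ext^1_U(\mathcal{F},\mathbb{G}_m)]$, $R(\mathcal{F})$, $[Hom_U(\mathcal{F},\mathbb{G}_m)_{tor}]$, $[\ker\Psi^1(\mathcal{F})]$, $[\mathrm{cok}(\delta(\mathcal{F})_{tor})]$, and $\prod_v[H^0(K_v,\mathcal{F}_v)_{tor}]$. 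Each of these is manifestly preserved under $\pi'_*$ by Lemma~\ref{pre_finite_inv} and Proposition~\ref{finite_inv}, so the proof is one line. Your method would ultimately work, but the compatibility-of-splittings verification you flag as the main obstacle is real and tedious; the paper's use of the closed formula eliminates it completely.
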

\begin{proof}
From Proposition $\ref{finite_inv}$, $\pi'_{*}\mathcal{F}$ is a strongly-$\mathbb{Z}$-constructible sheaf and $R(\pi'_{*}\mathcal{F})=R(\mathcal{F})$. To show $H^n(D_{\pi'_{*}\mathcal{F}}) \simeq H^n(D_{\mathcal{F}})$, we only need to apply the 5-lemma to the following diagram
\[ \xymatrix @C=1.1pc{0 \ar[r]& H^0(D_{\pi'_{*}\mathcal{F}}) \ar[r] \ar[d] & H^0_{c}(U,\pi'_{*}\mathcal{F}) \ar[r] \ar[d]^{\simeq}&\prod_{v\in S_{\infty}}H^0(K_v,(\pi'_{*}\mathcal{F})_v) \ar[r] \ar[d]^{\simeq} & H^1(D_{\pi'_{*}\mathcal{F}}) \ar[r]\ar[d] & H^1_{c}(U,\pi'_{*}\mathcal{F}) \ar[r]\ar[d]^{\simeq} &  ...\\ 
0 \ar[r]& H^0(D_\mathcal{F}) \ar[r]  & H^0_{c}(V,\mathcal{F}) \ar[r] & \prod_{w\in S_{L,\infty}}H^0(L_w,\mathcal{F}_w) \ar[r] & H^1(D_\mathcal{F}) \ar[r] & H^1_{c}(V,\mathcal{F}) \ar[r]  &  ...\\ }
\]
where the rows are exact from Proposition $\ref{Hn_dF}$. Finally, each term in formula \eqref{eqn_thm_chi1} is invariant with respect to $\pi'_{*}$.
Therefore, ${\chi}_U(\pi'_{*}\mathcal{F})={\chi}_V(\mathcal{F})$.

\end{proof}

\begin{cor}\label{euler_piZ}
The sheaf $\pi_{*}\mathbb{Z}$ on $Spec(K)$  corresponds to the induced $G_K$-module ${Ind}_{G_L}^{G_K}(\mathbb{Z})$. If we write $\pi_{*}\mathbb{Z}$ for ${Ind}_{G_L}^{G_K}(\mathbb{Z})$ then
 $ L^{*}_S(\pi_{*}\mathbb{Z},0)=-\chi_U(\pi'_{*}\mathbb{Z}) $.
\end{cor}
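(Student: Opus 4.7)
The plan is to reduce the corollary to Proposition \ref{Z_Euler} and Proposition \ref{euler_finite_inv} by invoking the standard inductivity property of Artin L-functions. First, I would observe that the Artin L-function of an induced representation coincides with the L-function of the underlying representation of the smaller Galois group. Concretely, for the trivial representation of $G_L$ induced up to $G_K$, this gives
\[ L_S(\mathrm{Ind}_{G_L}^{G_K}(\mathbb{Z}),s) = L_{S'}(\mathbb{Z}_{G_L},s) = \zeta_{L,S'}(s), \]
where $S'$ is the set of places of $L$ lying above $S$. The identification of local factors at $v \in S$ with the product of local factors at the $w \mid v$ in $S'$ is exactly what makes the two L-functions with their respective $S$/$S'$-truncations agree.

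Next, I would apply Proposition \ref{Z_Euler} to the scheme $V = \mathrm{Spec}(O_{L,S'})$ with its constant sheaf $\mathbb{Z}$. This immediately yields
\[ \zeta^{*}_{L,S'}(0) = -\chi_V(\mathbb{Z}). \]
Then Proposition \ref{euler_finite_inv}, applied to the finite map $\pi' : V \to U$ and the strongly-$\mathbb{Z}$-constructible sheaf $\mathbb{Z}$ on $V$, gives
\[ \chi_U(\pi'_{*}\mathbb{Z}) = \chi_V(\mathbb{Z}). \]
Chaining these equalities together produces $L^{*}_S(\pi_{*}\mathbb{Z},0) = \zeta^{*}_{L,S'}(0) = -\chi_V(\mathbb{Z}) = -\chi_U(\pi'_{*}\mathbb{Z})$, which is the claimed identity.

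Essentially no real obstacles are present: the corollary is a straightforward concatenation of Propositions \ref{Z_Euler} and \ref{euler_finite_inv} with the inductivity of Artin L-functions. The only point worth checking carefully is the bookkeeping of the set of omitted local factors, namely that truncating at $S$ on the $K$-side is compatible with truncating at $S'$ on the $L$-side under the induction identity. This follows from the standard decomposition of the local factor of an induced representation at $v$ as the product, over $w \mid v$, of local factors of the original representation at $w$, specialized to the trivial representation of $G_L$.
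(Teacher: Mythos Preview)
Your proposal is correct and follows essentially the same approach as the paper: invoke the inductivity identity $L_S(\pi_{*}\mathbb{Z},s)=\zeta_{L,S'}(s)$, apply Proposition~\ref{Z_Euler} on $V$ to get $\zeta^{*}_{L,S'}(0)=-\chi_V(\mathbb{Z})$, and then use Proposition~\ref{euler_finite_inv} to conclude $\chi_V(\mathbb{Z})=\chi_U(\pi'_{*}\mathbb{Z})$. Your extra remark on the compatibility of the $S$/$S'$ truncations is a welcome clarification but does not change the argument.
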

\begin{proof}
By Propositions  $\ref{Z_Euler}$, $\ref{euler_finite_inv}$ and the fact that $ L_S(\pi_{*}\mathbb{Z},s)=\zeta_{L,S'}(s)$, we have 
\[ L_S^{*}(\pi_{*}\mathbb{Z},0)=\zeta^{*}_{L,S'}(0)=-\chi_V(\mathbb{Z})=-\chi_U(\pi'_{*}\mathbb{Z}).\]
\end{proof}
\subsection{Multiplicative Property}
\begin{defn}\label{defn_multiplicative}
Suppose we have a short exact sequence of strongly-$\mathbb{Z}$-constructible sheaves on $U$
\begin{equation}\label{chi_mul_seq1}
0 \to \mathcal{F}_1 \to \mathcal{F}_2 \to \mathcal{F}_3 \to 0.
\end{equation}
We say that  $\chi_U$ is multiplicative with respect to \eqref{chi_mul_seq1}
   if  $ {\chi}_U(\mathcal{F}_2) = {\chi}_U(\mathcal{F}_1){\chi}_U(\mathcal{F}_3).$
\end{defn}
Clearly if \eqref{chi_mul_seq1} splits then $\chi_U$ is multiplicative with respect to  \eqref{chi_mul_seq1}. 
We want to know whether $\chi_U$ is multiplicative or not in general. Unfortunately, we do not know the answer to this question except in special case when $K$ is totally imaginary and $U=X$ (see Proposition  \ref{prop_multi_imaginary}). Fortunately, we only need the multiplicative property of $\chi_U$ for a special type of exact sequences for the proof of our main results (see Proposition \ref{prop_weak_multiplicative}).

   From \eqref{eqn_thm_chi1}, ${{\chi}_U(\mathcal{F}_1){\chi}_U(\mathcal{F}_3)}/{{\chi}_U(\mathcal{F}_2)}$ is given by

\begin{equation}\label{chi_mul_eq0}
\prod_{i=1}^3\left(\frac{R(\mathcal{F}_i)}{[\mathrm{cok}(\delta(\mathcal{F}_i)_{tor})]}\right)^{(-1)^{i+1}}
  \prod_{i=1}^3\left(
  \frac{[H^0_{et}(U,\mathcal{F}_{i})_{tor}][Ext^1_{U}(\mathcal{F}_i,\mathbb{G}_m)]}{[Hom_{U}(\mathcal{F}_i,\mathbb{G}_m)_{tor}][H^0_{et}(U,\mathcal{F}_{i,B})_{tor}]}
  \right)^{(-1)^{i+1}} 
  \prod_{i=1}^3[\ker(\Psi^1(\mathcal{F}_i))]^{(-1)^{i}}.
\end{equation}

From the long exact sequence of cohomology associated with ($\ref{chi_mul_seq1}$) 
\begin{equation}\label{chi_mul_seq2}
(\mathcal{H}^0): \qquad 0 \to H^0_{et}(U,\mathcal{F}_1) \to H^0_{et}(U,\mathcal{F}_2) \to
 H^0_{et}(U,\mathcal{F}_3) \to Q_1 \to 0 .
\end{equation}
%\begin{equation}\label{chi_mul_seq3}
%0 \to Q_1 \to H^1_{et}(U,\mathcal{F}_1) \to H^1_{et}(U,\mathcal{F}_2) \to ... \to H^2_{et}(U,\mathcal{F}_3) \to Q_2 \to 0 ,
%\end{equation}
Applying Proposition $\ref{det_tor}$ to ($\ref{chi_mul_seq2}$), we obtain
\begin{equation}\label{chi_mul_eq2}
\left(\prod_{i=1}^3[H^0_{et}(U,\mathcal{F}_i)_{tor}]^{(-1)^{i+1}}\right)=
\nu(\mathcal{H}^0)_{\mathbb{R}}[Q_1].
\end{equation}
Similarly, we have 
\begin{equation}\label{chi_mul_seq4}
(\mathcal{H}om): \quad 0 \to Hom_U(\mathcal{F}_3,\mathbb{G}_m) \to Hom_U(\mathcal{F}_2,\mathbb{G}_m) \to Hom_U(\mathcal{F}_1,\mathbb{G}_m) \to R_1 \to 0.
\end{equation}
\begin{equation}\label{chi_mul_seq4b}
  0 \to R_1 \to Ext^1_U(\mathcal{F}_3,\mathbb{G}_m) \to Ext^1_U(\mathcal{F}_2,\mathbb{G}_m) \to Ext^1_U(\mathcal{F}_1,\mathbb{G}_m) \to R_2 \to 0.
\end{equation}
Applying Proposition $\ref{det_tor}$ to ($\ref{chi_mul_seq4}$) yields
\begin{equation}\label{chi_mul_eq3}
\left(\prod_{i=1}^3[Hom_{U}(\mathcal{F}_i,\mathbb{G}_m)_{tor}]^{(-1)^{i+1}}\right)
=\nu(\mathcal{H}om)_{\mathbb{R}}[R_1].
\end{equation}
From \eqref{chi_mul_seq4b}, we have
\begin{equation}\label{chi_mul_eq5}
\left(\prod_{i=1}^3[Ext^1_{U}(\mathcal{F}_i,\mathbb{G}_m)]^{(-1)^{i+1}}\right)
=[R_1][R_2].
\end{equation}
Let $(\mathcal{H}_S)$ be the exact sequence
\begin{equation}\label{chi_mul_seq5}
(\mathcal{H}_S): \quad 0 \to H^0_{et}(U,\mathcal{F}_{1,S}) \to H^0_{et}(U,\mathcal{F}_{2,S}) \to H^0_{et}(U,\mathcal{F}_{3,S}) \to P_1 \to 0.
\end{equation} 
Applying Proposition $\ref{det_tor}$ to ($\ref{chi_mul_seq5}$), we obtain
\begin{equation}\label{chi_mul_eq4}
 \left(\prod_{i=1}^3[H^0_{et}(U,\mathcal{F}_{i,S})_{tor}]^{(-1)^{i+1}}\right)=\nu(\mathcal{H}_S)_{\mathbb{R}}[P_1].
\end{equation}

Therefore,
\begin{equation}\label{eqn_multi_1}
\prod_{i=1}^3\left(
  \frac{[H^0_{et}(U,\mathcal{F}_{i})_{tor}][Ext^1_{U}(\mathcal{F}_i,\mathbb{G}_m)]}{[Hom_{U}(\mathcal{F}_i,\mathbb{G}_m)_{tor}][H^0_{et}(U,\mathcal{F}_{i,B})_{tor}]}
  \right)^{(-1)^{i+1}} 
  = \frac{\nu(\mathcal{H}^0)_{\mathbb{R}}[Q_1][R_2]}
  {\nu(\mathcal{H}om)_{\mathbb{R}}\nu(\mathcal{H}_S)_{\mathbb{R}}[P_1]}.
\end{equation}

\begin{prop}\label{prop_regulator_multi}
\begin{equation}\label{eqn_regulator_multi_1}
\prod_{i=1}^3\left(\frac{R(\mathcal{F}_i)}{[\mathrm{cok}(\delta(\mathcal{F}_i)_{tor})]}\right)^{(-1)^{i+1}}
=\frac{\nu(\mathcal{H}om)_{\mathbb{R}}\nu(\mathcal{H}_S)_{\mathbb{R}}}{\nu(\mathcal{H}^0)_{\mathbb{R}}}.
\end{equation}
\end{prop}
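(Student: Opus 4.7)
The plan is to tensor every group with $\mathbb{R}$ and reduce to a linear-algebra statement about determinants of exact sequences, which can then be attacked using the multiplicativity of the $\nu$-invariant developed in the appendix. Abbreviate $A_i := H^0_{et}(U,\mathcal{F}_i)$, $B_i := H^0_{et}(U,\mathcal{F}_{i,S}) = \prod_{v\in S}H^0(K_v,\mathcal{F}_{i,v})$, $T_i := B_i/A_i$, and $H_i := \mathrm{Hom}_U(\mathcal{F}_i,\mathbb{G}_m)$. By condition 3 of Definition $\ref{strong_const}$, the regulator pairing induces, after $\otimes\mathbb{R}$, a perfect pairing $T_i^\mathbb{R} \times H_i^\mathbb{R} \to \mathbb{R}$ whose determinant with respect to integral bases of the torsion-free quotients equals $\pm R(\mathcal{F}_i)$.

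First I would verify that each of $(\mathcal{H}^0)$, $(\mathcal{H}_S)$, $(\mathcal{H}om)$ becomes a short exact sequence of finite-dimensional $\mathbb{R}$-vector spaces after $\otimes\mathbb{R}$: the tail terms $Q_1, P_1, R_1$ are finite because they embed into $H^1_{et}(U,\mathcal{F}_1)$, $\prod_v H^1(K_v,\mathcal{F}_{1,v})$, and $\mathrm{Ext}^1_U(\mathcal{F}_3,\mathbb{G}_m)$ respectively, all of which are finite by strong-$\mathbb{Z}$-constructibility together with the standard finiteness of $H^1$ for finitely generated Galois modules over local fields. Using condition 1 of Definition $\ref{strong_const}$, the columns $0 \to A_i \to B_i \to T_i \to 0$ are also short exact after $\otimes\mathbb{R}$; comparing these columns with the first two rows via the nine lemma produces a third short exact sequence $(\mathcal{T})_\mathbb{R}: 0 \to T_1^\mathbb{R} \to T_2^\mathbb{R} \to T_3^\mathbb{R} \to 0$. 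The functoriality of $\phi\mapsto\phi_S$ and the naturality of $\Lambda_K$ yield the adjoint compatibility $\langle \iota_T(\alpha),\phi\rangle_{\mathcal{F}_2} = \langle\alpha,i^{*}\phi\rangle_{\mathcal{F}_1}$ for $\alpha\in T_1, \phi\in H_2$ (and analogously for the projection onto $\mathcal{F}_3$), so the regulator pairings identify $(\mathcal{T})_\mathbb{R}$ with the dual of $(\mathcal{H}om)_\mathbb{R}$.

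Applying the multiplicativity of the $\nu$-invariant to this $3\times 3$ diagram of short exact sequences (in which the three columns are the regulator pairings, now isomorphisms with determinants $\pm R(\mathcal{F}_i)$ up to a change of integral basis) gives a first relation of the shape $\prod R(\mathcal{F}_i)^{(-1)^{i+1}} = \pm\,\nu(\mathcal{H}om)_\mathbb{R}\,\nu(\mathcal{T})_\mathbb{R}$, modulo correction factors coming from the discrepancy between the integral basis of $T_i^{tf}$ (used in the definition of $R(\mathcal{F}_i)$) and the quotient basis induced by $B_i^{tf}$ along the column $B_i \twoheadrightarrow T_i$. A second application of multiplicativity to the three columns $0\to A_i\to B_i\to T_i\to 0$ computes $\nu(\mathcal{T})_\mathbb{R}$ in terms of $\nu(\mathcal{H}^0)_\mathbb{R}$ and $\nu(\mathcal{H}_S)_\mathbb{R}$; the remaining discrepancy between the two integral structures on $T_i^{tf}$ is measured by the index of $\mathrm{im}(B_i^{tor}\to T_i^{tor})$ inside $T_i^{tor}$, which by definition is $[\mathrm{cok}(\delta(\mathcal{F}_i)_{tor})]$ once the kernel contribution from $A_i^{tor}$ has cancelled through the $\nu$-computation.

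The main obstacle is the bookkeeping: three four-term sequences, three column sequences, their torsion subquotients, and three distinct integral structures on the same underlying $\mathbb{R}$-vector spaces must be tracked simultaneously. Verifying that all torsion contributions combine to produce exactly $\prod[\mathrm{cok}(\delta(\mathcal{F}_i)_{tor})]^{(-1)^{i+1}}$, and that the sign works out, is a lengthy but essentially mechanical calculation that reduces to repeated applications of the torsion-index identities recorded in the appendix, in particular Lemma $\ref{torsion_group}$ and Proposition $\ref{det_tor}$.
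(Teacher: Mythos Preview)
Your proposal is correct and follows essentially the same route as the paper: apply Lemma~\ref{det_2x3} to the diagram whose rows are $(\mathcal{T})_\mathbb{R}=(\mathcal{H}_S/\mathcal{H}^0)$ and $(\mathcal{H}om)^*_\mathbb{R}$ with vertical regulator isomorphisms to get $\prod R(\mathcal{F}_i)^{(-1)^{i+1}}=\nu(\mathcal{H}_S/\mathcal{H}^0)\nu(\mathcal{H}om)_\mathbb{R}$, then apply Lemma~\ref{det_3x3} to the $3\times 3$ diagram with rows $(\mathcal{H}^0)_\mathbb{R}$, $(\mathcal{H}_S)_\mathbb{R}$, $(\mathcal{H}_S/\mathcal{H}^0)$ and column determinants $\nu(\mathcal{E}_i)=[\mathrm{cok}(\delta(\mathcal{F}_i)_{tor})]$. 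The only place your exposition is slightly muddled is the talk of ``two integral structures on $T_i^{tf}$'': there is just one, and the factor $[\mathrm{cok}(\delta(\mathcal{F}_i)_{tor})]$ arises directly as the determinant of the column $0\to A_i\to B_i\to T_i\to 0$ with respect to integral bases, via Lemma~\ref{torsion_group} and Proposition~\ref{det_tor}.
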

\begin{proof}
Consider the exact sequence
\begin{equation}\label{R(F)_T}
0 \to   H^0_{et}(U,\mathcal{F}_i)_{\mathbb{R}} \to
H_{et}^0(U,\mathcal{F}_{i,S})_{\mathbb{R}} 
\xrightarrow{\delta(\mathcal{F}_i)} \left(\frac{H^0_{et}(U,\mathcal{F}_{i,S})}{H^0_{et}(U,\mathcal{F}_i)}\right)_{\mathbb{R}}\to 0
\end{equation}
Since $\mathcal{F}_i$ is strongly-$\mathbb{Z}$-constructible, $\ker(\Psi^0(\mathcal{F}_i))$ is finite. Applying Lemma $\ref{torsion_group}$ to the exact sequence
\[ 0 \to \ker(\Psi^0(\mathcal{F}_i)) \to H^0_{et}(U,\mathcal{F}_i)\xrightarrow{\Psi^0(\mathcal{F}_i)} H^0_{et}(U,\mathcal{F}_{i,S}) \xrightarrow{\delta(\mathcal{F}_i)} 
\frac{H^0_{et}(U,\mathcal{F}_{i,S})}{H^0_{et}(U,\mathcal{F}_i)} \to 0 \] 
we deduce that ($\ref{R(F)_T}$) has determinant $[\mathrm{cok} (\delta(\mathcal{F}_i)_{tor})]$ 
with respect to integral bases.
Applying Lemma $\ref{det_2x3}$ to the following diagram
\[ \xymatrix{ 
0 \ar[r] & \left(\frac{H^0_{et}(U,\mathcal{F}_{1,S})}{H^0_{et}(U,\mathcal{F}_1)}\right)_{\mathbb{R}} 
 \ar[r] \ar[d] & 
\left(\frac{H^0_{et}(U,\mathcal{F}_{2,S})}{H^0_{et}(U,\mathcal{F}_2)}\right)_{\mathbb{R}}  \ar[r] \ar[d] & 
\left(\frac{H^0_{et}(U,\mathcal{F}_{3,S})}{H^0_{et}(U,\mathcal{F}_3)}\right)_{\mathbb{R}} \ar[d] \ar[r] & 0
& (\mathcal{H}_S/\mathcal{H}^0)  \\
0 \ar[r] & Hom_{U}(\mathcal{F}_1,\mathbb{G}_m)^{*}_{\mathbb{R}} \ar[r] & Hom_{U}(\mathcal{F}_2,\mathbb{G}_m)^{*}_{\mathbb{R}} \ar[r] & Hom_{U}(\mathcal{F}_3,\mathbb{G}_m)^{*}_{\mathbb{R}} \ar[r] & 0 
& (\mathcal{H}om)^{*}_{\mathbb{R}} \\}
\]
yields 
\begin{equation}\label{eqn_regulator_multi_2}
\left(\prod_{i=1}^3R(\mathcal{F}_i)^{(-1)^{i+1}}\right)
={\nu(\mathcal{H}_S/\mathcal{H}^0)\nu(\mathcal{H}om)_{\mathbb{R}}}  .
\end{equation}
 Consider  the following diagram 
\[\xymatrixrowsep{0.2in}\xymatrix{    
&  0  \ar[d] &    0 \ar[d]  &   0  \ar[d]  \\
0 \ar[r] & H^0_{et}(U,\mathcal{F}_1)_{\mathbb{R}}  \ar[d] \ar[r]   &  H^0_{et}(U,\mathcal{F}_2)_{\mathbb{R}} \ar[d] 
\ar[r] &H^0_{et}(U,\mathcal{F}_3)_{\mathbb{R}}       \ar[d] \ar[r] & 0 & (\mathcal{H}^0)_{\mathbb{R}} 
\\
0 \ar[r] & H^0_{et}(U,\mathcal{F}_{1,S})_{\mathbb{R}}  \ar[d]^{\delta(\mathcal{F}_1)} \ar[r] & H^0_{et}(U,\mathcal{F}_{2,S})_{\mathbb{R}} \ar[d]^{\delta(\mathcal{F}_2)} \ar[r] & H^0_{et}(U,\mathcal{F}_{3,S})_{\mathbb{R}} \ar[d]^{\delta(\mathcal{F}_3)}\ar[r] & 0 & (\mathcal{H}_S)_{\mathbb{R}}
\\
0 \ar[r] & \left(\frac{H^0_{et}(U,\mathcal{F}_{1,S})}{H^0_{et}(U,\mathcal{F}_1)}\right)_{\mathbb{R}}  \ar[r] \ar[d] & 
\left(\frac{H^0_{et}(U,\mathcal{F}_{2,S})}{H^0_{et}(U,\mathcal{F}_2)}\right)_{\mathbb{R}}  \ar[r]  \ar[d] & 
\left(\frac{H^0_{et}(U,\mathcal{F}_{3,S})}{H^0_{et}(U,\mathcal{F}_3)}\right)_{\mathbb{R}} \ar[r] \ar[d] & 0 &
 (\mathcal{H}_S/\mathcal{H}^0)  \\ 
 &  0   &    0   &   0   \\                    
  & (\mathcal{E}_1) & (\mathcal{E}_2) & (\mathcal{E}_3)   &             \\   }
 \]
 Applying Lemma $\ref{det_3x3}$ and note that $\nu(\mathcal{E}_i)=[\mathrm{cok} (\delta(\mathcal{F}_i)_{tor})]$, we have  
 \begin{equation}\label{eqn_regulator_multi_3}
 \left(\prod_{i=1}^3[\mathrm{cok}(\delta(\mathcal{F}_i)_{tor})]^{(-1)^{i+1}}\right) =
 \frac{\nu(\mathcal{H}^0)_{\mathbb{R}}\nu(\mathcal{H}_S/\mathcal{H}^0)}{\nu(\mathcal{H}_S)_{\mathbb{R}}}.
 \end{equation}
 Therefore, \eqref{eqn_regulator_multi_1} follows from \eqref{eqn_regulator_multi_2} and \eqref{eqn_regulator_multi_3}.
\end{proof}

Combining \eqref{eqn_multi_1} and \eqref{eqn_regulator_multi_1} yields
\begin{equation}\label{eqn_multi_2}
\frac{{\chi}_U(\mathcal{F}_1){\chi}_U(\mathcal{F}_3)}{{\chi}_U(\mathcal{F}_2)}=
\frac{[Q_1][R_2]}{[P_1]}\prod_{i=1}^3[\ker(\Psi^1(\mathcal{F}_i))]^{(-1)^{i}}
\end{equation}
\begin{prop}\label{prop_multi_imaginary}
Suppose $K$ is a totally imaginary number field and $U=X=Spec(O_K)$. 
 Then $\chi_U=\chi$ is multiplicative with respect to every short exact sequence of strongly-$\mathbb{Z}$-constructible sheaves on $U$. 
\end{prop}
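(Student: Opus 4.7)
The plan is to show that the right-hand side of \eqref{eqn_multi_2} equals $1$ under the hypotheses. In the totally imaginary case with $U=X$, the set $S=S_\infty$ consists entirely of complex places, so $G_{K_v}=1$ and $H^n(K_v,\mathcal{F}_{i,v})=0$ for all $n\geq 1$, $v\in S$, and $i=1,2,3$. Three simplifications follow: (i) $\ker\Psi^1(\mathcal{F}_i)=H^1_{et}(U,\mathcal{F}_i)$, since the target of $\Psi^1(\mathcal{F}_i)$ vanishes; (ii) $H^n_c(U,\mathcal{F}_i)\simeq H^n_{et}(U,\mathcal{F}_i)$ for all $n$ by \eqref{compact_coh}; and (iii) since $(-)^{G_{K_v}}=\mathrm{id}$ for each complex $v$, the stalk-level surjection $\mathcal{F}_{2,v}\twoheadrightarrow \mathcal{F}_{3,v}$ yields the surjectivity $H^0_{et}(U,\mathcal{F}_{2,S})\twoheadrightarrow H^0_{et}(U,\mathcal{F}_{3,S})$, whence $P_1=0$ in \eqref{chi_mul_seq5}. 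Substituting these into \eqref{eqn_multi_2} reduces the target identity to
\[
[Q_1][R_2]\prod_{i=1}^3 [H^1_{et}(U,\mathcal{F}_i)]^{(-1)^i}=1.
\]

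Next I compute $[Q_1]$ by an alternating-order identity. By Artin-Verdier duality \cite[II.3.1]{Mil06} together with (ii), $H^2_{et}(U,\mathcal{F}_i)\simeq Ext^1_U(\mathcal{F}_i,\mathbb{G}_m)^D$ is finite, since $Ext^1_U(\mathcal{F}_i,\mathbb{G}_m)$ is finite by strong $\mathbb{Z}$-constructibility. Setting $T_1:=\ker\bigl(H^2_{et}(U,\mathcal{F}_1)\to H^2_{et}(U,\mathcal{F}_2)\bigr)$, the segment
\[
0\to Q_1\to H^1_{et}(U,\mathcal{F}_1)\to H^1_{et}(U,\mathcal{F}_2)\to H^1_{et}(U,\mathcal{F}_3)\to T_1\to 0
\]
of the long exact cohomology sequence of \eqref{chi_mul_seq1} has all terms finite, so its alternating-order identity gives $[Q_1][T_1]=\prod_{i=1}^3 [H^1_{et}(U,\mathcal{F}_i)]^{(-1)^{i+1}}$.

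The last step is to identify $[T_1]=[R_2]$ using Artin-Verdier duality. By functoriality of the AV pairing, the long exact cohomology sequence of \eqref{chi_mul_seq1} in $H^*_c$ is Pontryagin dual to the long $Ext$ sequence \eqref{chi_mul_seq4b} in the corresponding degrees, at least on the finite parts. In particular, the map $H^2_c(U,\mathcal{F}_1)\to H^2_c(U,\mathcal{F}_2)$ is dual to $Ext^1_U(\mathcal{F}_2,\mathbb{G}_m)\to Ext^1_U(\mathcal{F}_1,\mathbb{G}_m)$, so its kernel $T_1$ is dual to the cokernel $R_2$ of the latter, giving $[T_1]=[R_2]$. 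Combining, $[Q_1][R_2]=\prod[H^1_{et}(U,\mathcal{F}_i)]^{(-1)^{i+1}}$, yielding the displayed identity and hence $\chi_U(\mathcal{F}_2)=\chi_U(\mathcal{F}_1)\chi_U(\mathcal{F}_3)$.

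The main technical obstacle will be the final step: the usual statement of Artin-Verdier duality gives only a termwise perfect pairing between the finite parts of $Ext^r$ and $H^{3-r}_c$, and promoting this to a duality of the full long exact sequences requires compatibility with the connecting homomorphisms, a functoriality point implicit in \cite[II.3]{Mil06} but deserving explicit verification.
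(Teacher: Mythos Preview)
Your proof is correct and follows essentially the same route as the paper's: reduce via \eqref{eqn_multi_2} to the identity $[Q_1][R_2]=\prod_i[H^1_{et}(X,\mathcal{F}_i)]^{(-1)^{i+1}}$, and extract this from the five-term piece of the long exact sequence together with Artin--Verdier duality identifying the kernel of $H^2_{et}(X,\mathcal{F}_1)\to H^2_{et}(X,\mathcal{F}_2)$ with $R_2^D$. The paper simply writes $R_2^D$ in place of your $T_1$ and suppresses the intermediate step.

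One remark on your self-identified ``obstacle'': you do not in fact need compatibility of Artin--Verdier duality with connecting homomorphisms here. The map whose kernel you analyze, $H^2_c(X,\mathcal{F}_1)\to H^2_c(X,\mathcal{F}_2)$, is induced by the morphism $\mathcal{F}_1\to\mathcal{F}_2$, not by a boundary map; its dual under the Artin--Verdier pairing is the map $Ext^1_X(\mathcal{F}_2,\mathbb{G}_m)\to Ext^1_X(\mathcal{F}_1,\mathbb{G}_m)$ induced by the same morphism. So all that is required is ordinary naturality of the duality isomorphism in $\mathcal{F}$, which is part of the standard statement in \cite[II.3]{Mil06}. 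Your concern would only arise if you needed to match a connecting map on one side with a connecting map on the other, and that does not occur in this argument.
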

\begin{proof}
When $U=X$ and $K$ is totally imaginary, $P_1=0$ and $\ker\Psi^1(\mathcal{F}_i)=H^1_{et}(X,\mathcal{F}_i)$. The Artin-Verdier duality in this special case implies 
$Ext^1_X(\mathcal{F}_i,\mathbb{G}_m)\simeq H^2_{et}(X,\mathcal{F}_i)^D$. Therefore, we have the following exact sequence
\[ 0 \to Q_1 \to  H^1_{et}(X,\mathcal{F}_1) \to H^1_{et}(X,\mathcal{F}_2) \to H^1_{et}(X,\mathcal{F}_3) \to R_2^D \to 0.\]
In particular,
\[ \prod_{i=1}^3[H^1_{et}(X,\mathcal{F}_i)]^{(-1)^{i}} = ([Q_1][R_2])^{-1}.
\]
 Then \eqref{eqn_multi_2} implies $\chi(\mathcal{F}_2)=\chi(\mathcal{F}_1)\chi(\mathcal{F}_3)$.
\end{proof}

\begin{prop}\label{prop_weak_multiplicative}
Suppose we have an exact sequence of strongly-$\mathbb{Z}$-constructible sheaves on $U$
\begin{equation*}
0 \to \mathcal{F}_1 \to \mathcal{F}_2 \to \mathcal{F}_3 \to 0.
\end{equation*}
Assume further that  $H^1_{et}(U,\mathcal{F}_2)=0$ and $H^1(K_v,\mathcal{F}_{2,v})=0$ for all $v\in S$.
   Then \[ {\chi}_U(\mathcal{F}_2) = {\chi}_U(\mathcal{F}_1){\chi}_U(\mathcal{F}_3).\]
\end{prop}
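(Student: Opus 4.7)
The plan is to trace equation \eqref{eqn_multi_2} under the extra hypotheses, reducing multiplicativity to a single counting identity, and then to verify that identity by combining Artin--Verdier duality with a snake-lemma computation at the level of $H^{2}_{c}$. To start, I would use the hypotheses to simplify the right-hand side of \eqref{eqn_multi_2}. Since $H^1_{et}(U,\mathcal{F}_2)=0$, the long exact sequence of \'etale cohomology collapses $(\mathcal{H}^0)$ to $H^0(\mathcal{F}_2)\to H^0(\mathcal{F}_3)\to H^1(\mathcal{F}_1)\to 0$, so $Q_1=H^1_{et}(U,\mathcal{F}_1)$ and $\ker\Psi^1(\mathcal{F}_2)=0$. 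The local hypothesis $H^1(K_v,\mathcal{F}_{2,v})=0$ for $v\in S$ similarly gives $P_1=\prod_{v\in S}H^1(K_v,\mathcal{F}_{1,v})$. After substituting into \eqref{eqn_multi_2}, the statement $\chi_U(\mathcal{F}_2)=\chi_U(\mathcal{F}_1)\chi_U(\mathcal{F}_3)$ becomes the identity
\[ [R_2]=[\mathrm{coker}\,\Psi^1(\mathcal{F}_1)]\cdot[\ker\Psi^1(\mathcal{F}_3)]. \]

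Next I would reinterpret $R_2$ via Artin--Verdier duality. By \eqref{chi_mul_seq4b}, $R_2$ is the cokernel of $Ext^1_U(\mathcal{F}_2,\mathbb{G}_m)\to Ext^1_U(\mathcal{F}_1,\mathbb{G}_m)$; since both groups are finite, Pontryagin duality is exact, and the isomorphism $Ext^1_U(\mathcal{F}_i,\mathbb{G}_m)^D\simeq H^2_c(U,\mathcal{F}_i)$ used in the proof of Proposition \ref{long_exact_Weil} identifies $R_2^D$ with $\ker\bigl(H^2_c(U,\mathcal{F}_1)\to H^2_c(U,\mathcal{F}_2)\bigr)$. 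Hence $[R_2]$ equals the order of this kernel.

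The core of the proof is then a snake-lemma calculation. From \eqref{compact_coh}, each $\mathcal{F}_i$ gives a short exact sequence
\[ 0\to\mathrm{coker}\,\Psi^1(\mathcal{F}_i)\to H^2_c(U,\mathcal{F}_i)\to\ker\Psi^2(\mathcal{F}_i)\to 0, \]
and $\mathcal{F}_1\hookrightarrow\mathcal{F}_2$ induces a morphism between these sequences. Under our hypotheses the $i=2$ row flattens: $\mathrm{coker}\,\Psi^1(\mathcal{F}_2)=0$, so $\ker\Psi^2(\mathcal{F}_2)\simeq H^2_c(U,\mathcal{F}_2)$. The snake lemma then yields
\[ 0\to\mathrm{coker}\,\Psi^1(\mathcal{F}_1)\to\ker\bigl(H^2_c(U,\mathcal{F}_1)\to H^2_c(U,\mathcal{F}_2)\bigr)\to\ker g\to 0, \]
where $g\colon\ker\Psi^2(\mathcal{F}_1)\to H^2_c(U,\mathcal{F}_2)$ is the induced map, equivalently the restriction to $\ker\Psi^2(\mathcal{F}_1)$ of $H^2(\mathcal{F}_1)\to H^2(\mathcal{F}_2)$.

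To finish, I would identify $\ker g$ with $\ker\Psi^1(\mathcal{F}_3)$. Using $H^1_{et}(U,\mathcal{F}_2)=0$, the global long exact sequence embeds $H^1(\mathcal{F}_3)$ into $H^2(\mathcal{F}_1)$ as the kernel of $H^2(\mathcal{F}_1)\to H^2(\mathcal{F}_2)$, so $\ker g$ consists exactly of the elements of $H^1(\mathcal{F}_3)$ killed by $\Psi^2(\mathcal{F}_1)$. The local long exact sequence (using $H^1(K_v,\mathcal{F}_{2,v})=0$) analogously makes $\prod_v H^1(K_v,\mathcal{F}_{3,v})\hookrightarrow\prod_v H^2(K_v,\mathcal{F}_{1,v})$ injective, so the naturality square relating $\Psi^1(\mathcal{F}_3)$ and the restriction of $\Psi^2(\mathcal{F}_1)$ identifies $\ker g$ with $\ker\Psi^1(\mathcal{F}_3)$, yielding the required identity. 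The main obstacle is ensuring the Artin--Verdier isomorphism is compatible with the natural map $H^2_c(\mathcal{F}_1)\to H^2_c(\mathcal{F}_2)$ induced by $\mathcal{F}_1\hookrightarrow\mathcal{F}_2$; once that naturality is in hand, all remaining steps are routine diagram chases.
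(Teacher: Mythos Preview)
Your proposal is correct and follows essentially the same route as the paper's proof: both reduce \eqref{eqn_multi_2} under the hypotheses to the identity $[R_2]=[\mathrm{coker}\,\Psi^1(\mathcal{F}_1)]\cdot[\ker\Psi^1(\mathcal{F}_3)]$, identify $R_2^D$ with $\ker\bigl(H^2_c(U,\mathcal{F}_1)\to H^2_c(U,\mathcal{F}_2)\bigr)$ via Artin--Verdier, and then extract the two factors by the same diagram chase (you phrase it as a snake-lemma argument, the paper as a kernel--cokernel sequence for the composite through $H^2_c(U,\mathcal{F}_1)/\mathrm{coker}\,\Psi^1(\mathcal{F}_1)$, but these are the same computation). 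The naturality of the Artin--Verdier isomorphism you flag as an obstacle is indeed used tacitly in the paper and is standard.
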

\begin{proof}
We use the same notations used in the beginning of the section. As $H^1_{et}(U,\mathcal{F}_2)=0$ and $H^1(K_v,\mathcal{F}_{2,v})=0$ for $v\in S$, we have $Q_1=H^1_{et}(U,\mathcal{F}_1)$, $P_1=\prod_{v\in S} H^1(K_v,\mathcal{F}_{2,v})$, $\ker(\Psi^1(\mathcal{F}_1))=0$. Therefore, from \eqref{eqn_multi_2} 
\[ \frac{{\chi}_U(\mathcal{F}_1){\chi}_U(\mathcal{F}_3)}{{\chi}_U(\mathcal{F}_2)}
= \frac{[R_2]}{[\mathrm{cok}(\Psi^1(\mathcal{F}_1))][\ker(\Psi^1(\mathcal{F}_3))]}.\]
Clearly, $\ker(\Psi^2(\mathcal{F}_1))\simeq H^2_c(U,\mathcal{F}_1)/\mathrm{cok}(\Psi^1(\mathcal{F}_1))$. We consider the composition of maps
\[ H^2_c(U,\mathcal{F}_1)
 \to H^2_c(U,\mathcal{F}_1)/\mathrm{cok}(\Psi^1(\mathcal{F}_1)) \xrightarrow{\beta} H^2_c(U,\mathcal{F}_2). \]
 Then by the kernel-cokernel exact sequence, we deduce
 \[ 0 \to \mathrm{cok}(\Psi^1(\mathcal{F}_1)) \to R_2^D \to \ker(\beta) \to 0.\]
 Therefore, it remains to prove $\ker(\beta)\simeq \ker(\Psi^1(\mathcal{F}_3))$. Indeed, consider the following diagram
 \[  \xymatrix{
 &  &  0  \ar[d] &  0  \ar[d]   \\
 &  & H^1_{et}(U,\mathcal{F}_3) \ar[d]  \ar[r]^{\Psi^1(\mathcal{F}_3)} &   \prod_{v\in S} H^1(K_v,\mathcal{F}_{3,v})  \ar[d]  \\
 0  \ar[r]  &   \ker(\Psi^2(\mathcal{F}_1))  \ar[r] \ar[d]^{\beta} &     H^2_{et}(U,\mathcal{F}_1) \ar[r]  \ar[d]  &   \prod_{v\in S} H^2(K_v,\mathcal{F}_{1,v})  \ar[d] \\
 0  \ar[r]  &    H^2_c(U,\mathcal{F}_2)   \ar[r]  &     H^2_{et}(U,\mathcal{F}_2) \ar[r]    &   \prod_{v\in S} H^2(K_v,\mathcal{F}_{2,v})  
 } 
 \]
 By diagram chasing, $\ker(\beta)\simeq \ker(\Psi^1(\mathcal{F}_3))$. Hence, $ {\chi}_U(\mathcal{F}_2) = {\chi}_U(\mathcal{F}_1){\chi}_U(\mathcal{F}_3).$
\end{proof}
\section{Artin L-functions of toric type}
\subsection{Special Values of L-functions}
Let $K$ be a number field. Let $M$ be a torsion free discrete $G_K$-module of finite type. Let $L/K$ be a finite Galois extension such that $G_L$ acts trivially on $M$, in other words, $L$ is a splitting field of $M$. Then $M_{\mathbb{C}}:=M\otimes_{\mathbb{Z}}\mathbb{C}$ is a finite dimensional representation of $G=G(L/K)$. 

For each finite place $v$ of $K$, let $w$ be a place of $L$ lying above $v$. Let $D_w$, $I_w$ be the decomposition and inertia groups of $w$. Let $F_w$ be the Frobenius element at $w$ and
$f_w$ be the inertia degree. Then $D_w/I_w$ is a cyclic group of order $f_w$ generated by $F_w$. We write $N(v)$ for the norm of $v$.
We recall the definition of the Artin L-function associated to $M_{\mathbb{C}}$ below.
\begin{defn}\label{defn_local_L}
 The local  L-function is defined by
\[ L_v(M,s):=\det(I-N(v)^{-s}F_w|M_{\mathbb{C}}^{I_w})^{-1}.\]
\end{defn}
\begin{defn}\label{defn_L_func}
Let $S$ be a finite set of places of $K$ containing all the infinite places. The partial Artin L-function is defined by
\[ L_S(M,s):=\prod_{v\notin S}L_v(M,s) \quad
\mbox{for $\mathrm{Re}(s)>1$}. \] 
We also write $L(M,s)$ for $L_{S_{\infty}}(M,s)$. 
\end{defn}
Then $L_S(M,s)$ is holomorphic for $\mathrm{Re}(s)>1$ and has a meromorphic continuation to the complex plane. Let $r_S(M):=\mathrm{ord}_{s=0}L_S(M,s)$ and $L_S^{*}(M,0):=\lim_{s\to 0}{L_S(M,s)}{s^{-r_S(M)}}$. We want to give a formula for $r_S(M)$ and $L_S^{*}(M,0)$ in terms of the Weil-\'etale Euler characteristic constructed in the previous sections. We begin by giving a cohomological formula for the order of vanishing and special value at zero of $L_v(M,s)$.
\begin{prop}\label{prop_local_L}
Let $v$ be a finite prime of $K$ and $w$ be a prime of $L$ lying over $v$.
\begin{enumerate}
\item Let $r_v(M)=\mathrm{rank}_{\mathbb{Z}}H^0(K_v,M)$. Then $ord_{s=0}L_v(M,s)=-r_v(M)$.
\item Let  $L_v^{*}(M,0):=\lim_{s\to 0}{L_v(M,s)}{s^{r_v(M)}}$ and $h(D_w/I_w,M^{I_w})$ be the Herbrand quotient. Then 
\[ L_v^{*}(M,0)=\frac{h(D_w/I_w,M^{I_w})}{(f_w\log N(v))^{r_v(M)}}.\]
\end{enumerate}
\end{prop}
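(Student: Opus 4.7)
The plan is a direct eigenvalue computation of the local L-factor, combined with a Herbrand-quotient identity for $\mathbb{Z}[G']$-lattices over a cyclic group $G'$. Set $G' := D_w/I_w$ (generated by $\sigma := F_w$ and of order $f_w$), $q := N(v)$, and $m := r_v(M)$. Since $\sigma$ acts on $M^{I_w}$ through the finite group $G'$, its eigenvalues $\alpha_1, \dots, \alpha_n$ on $M^{I_w}_{\mathbb{C}}$ are $f_w$-th roots of unity; the $1$-eigenspace equals $M^{D_w}_{\mathbb{C}} = H^0(K_v, M) \otimes_{\mathbb{Z}} \mathbb{C}$, of dimension $m$. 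Relabelling so that $\alpha_1 = \cdots = \alpha_m = 1$ and $\alpha_{m+1}, \dots, \alpha_n \neq 1$,
\[ L_v(M, s)^{-1} = \prod_{i=1}^{n} (1 - q^{-s}\alpha_i) = (1 - q^{-s})^{m}\prod_{i=m+1}^{n}(1 - q^{-s}\alpha_i). \]
Since $1 - q^{-s} = (\log q)\,s + O(s^2)$ while the factors with $\alpha_i \neq 1$ do not vanish at $s = 0$, the right-hand side has a zero of exact order $m$, which gives part (1) and the explicit leading coefficient
\[ L_v^{*}(M, 0) = \frac{1}{(\log q)^{m}\prod_{i=m+1}^{n}(1 - \alpha_i)}. \]

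To finish part (2) it suffices to prove the identity
\[ h(G', M^{I_w}) \cdot \prod_{i=m+1}^{n}(1 - \alpha_i) = f_w^{m}. \]
Both sides depend only on the $\mathbb{Q}[G']$-module structure of $M^{I_w}$: the Herbrand quotient is unchanged on passing to a sublattice of finite index, while the eigenvalues of $\sigma$ and the fixed-point rank $m$ see only the rational representation. Both sides are multiplicative in direct sums of $\mathbb{Q}[G']$-modules, and by Artin's induction theorem the class of $M^{I_w}\otimes\mathbb{Q}$ in the rational representation ring of $G'$ is a $\mathbb{Z}$-linear combination of the classes of the induced modules $\mathbb{Q}[G'/H]$ for subgroups $H \leq G'$; this reduces the identity to its verification for $N = \mathbb{Z}[G'/H]$.

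In that case $\sigma$ acts as a single cycle of length $d := [G':H]$ on the coset basis of $N$, so the eigenvalues of $\sigma$ on $N_{\mathbb{C}}$ are exactly the $d$-th roots of unity. Hence $m = 1$ and $\prod_{\alpha \neq 1}(1 - \alpha)$ is the value of $(T^d - 1)/(T - 1)$ at $T = 1$, namely $d$. Shapiro's Lemma gives $h(G', \mathbb{Z}[G'/H]) = h(H, \mathbb{Z}) = |H|$, and both sides of the identity equal $|H| \cdot d = f_w$. Combining the two steps,
\[ L_v^{*}(M, 0) = \frac{1}{(\log q)^{m}} \cdot \frac{h(G', M^{I_w})}{f_w^{m}} = \frac{h(D_w/I_w, M^{I_w})}{(f_w \log N(v))^{r_v(M)}}. \]
The eigenvalue computation of $L_v$ itself is routine; the real content is the Herbrand-quotient identity, whose clean proof rests on the invariance of both sides under pseudo-isomorphism of $\mathbb{Z}[G']$-lattices together with Artin's induction theorem for cyclic groups.
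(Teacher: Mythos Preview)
Your proof is correct, and it takes a genuinely different route from the paper's. Both arguments begin the same way, by splitting off the eigenvalue~$1$ part of the Frobenius action to get part~(1) and to reduce part~(2) to evaluating $\prod_{\alpha_i\neq 1}(1-\alpha_i)$. The divergence is in how that product is related to $f_w^{m}/h(G',M^{I_w})$. The paper works directly on the lattice: it sets $W'=\ker(N_H\mid M^{I_w})$, observes that $\det(I-F_w\mid W')=[H^{-1}_T(H,W')]$ from the exact sequence $0\to W'\xrightarrow{I-F_w}W'\to H^{-1}_T(H,W')\to 0$, and then computes $[H^1(H,W')]$ via the long exact Tate-cohomology sequence attached to $0\to W'\to M^{I_w}\xrightarrow{N_H}N_H(M^{I_w})\to 0$, using that $N_H(M^{I_w})$ is free of rank $r_v(M)$ with trivial $H$-action. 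Your argument instead notes that both $h(G',-)\cdot\prod_{\alpha\neq 1}(1-\alpha)$ and $f_w^{m}$ descend to homomorphisms $R_{\mathbb{Q}}(G')\to\mathbb{R}^{*}_{>0}$, and checks equality on the permutation modules $\mathbb{Z}[G'/H]$, which generate $R_{\mathbb{Q}}(G')$ over~$\mathbb{Z}$ for cyclic $G'$ by M\"obius inversion on the divisor lattice. The paper's approach is more elementary and self-contained (no representation-ring machinery); yours is more structural and makes the verification essentially a one-line computation on each generator via Shapiro's lemma. One small remark: you invoke ``Artin's induction theorem'' for the $\mathbb{Z}$-span statement, but what you actually need (and what holds) is the special fact that for cyclic $G'$ the permutation classes $[\mathbb{Q}[G'/H]]$ generate $R_{\mathbb{Q}}(G')$ as an abelian group; Artin induction in general only gives rational coefficients, though even that would suffice here since both sides are positive reals.
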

\begin{proof}
To ease notations, let $H:=D_w/I_w$.  Let $V=M^{I_w}_{\mathbb{C}}$ and $\pi_H : V \to V^H$ be the projection map
\[ \pi_H(x):=\frac{1}{f_w}\sum_{n=0}^{f_w-1}F_w^n(x).\]
Let $W=\ker(\pi_H)$. We have the following exact sequence of $H$-spaces.
\begin{equation}
0 \to W \to V \xrightarrow{\pi_H}  V^{H} \to 0
\end{equation} 
In particular, $L_v(M,s)^{-1}=\det(I-N(v)^{-s}F_w|V^H)\det(I-N(v)^{-s}F_w|W)$. As $F_w$ acts trivially on $V^H$ and $V^H\simeq H^0(K_v,M)_{\mathbb{C}}$, 
\[ \det(I-N(v)^{-s}F_w|V^H)=(1-N(v)^{-s})^{r_v(M)}.\]
On the other hand, $\lim_{s\to 0}\det(I-N(v)^{-s}F_w|W)=\det(I-F_w|W)\neq 0$. Indeed, suppose
$\det(I-F_w|W)= 0$, then there exists a non-zero element $x$ in $W$ such that $F_w(x)=x$. Hence, $0=\pi_H(x)=x$ which is a contradiction.  Therefore, 
\[ \mathrm{ord}_{s=0}L_v(M,s)=\mathrm{ord}_{s=0}(1-N(v)^{-s})^{-r_v(M)}=-r_v(M).\]
It is not hard to see that 
\[ \lim_{s\to 0} \frac{1-N(v)^{-s}}{s} = \log N(v).\]
Therefore, it remains to compute $\det(I-F_w|W)$. Let $N_H:=f_w\pi_H$, i.e. $N_H$ is the usual norm map in group cohomology. Let $W':=\ker(N_H|M^{I_w})$. Then 
$W\simeq W'\otimes_{\mathbb{Z}}\mathbb{C}$ and  $\det(I-F_w|W)= \det(I-F_w|W')$. We have the following exact sequence 
\begin{equation}
0 \to W' \xrightarrow{I-F_w} W' \to H^{-1}_T(H,W') \to 0.
\end{equation}
Note that $I-F_w$ is injective on $W'$ because $M$ is torsion free. Therefore, $\det(I-F_w|W')=[ H^{-1}_T(H,W')] =  [ H^{1}(H,W')]$. 
Consider the following exact sequence of $H$-modules
\[ 0 \to W' \to M^{I_w} \xrightarrow{N_H} N_H(M^{I_w}) \to 0.\]
The exact sequence of cohomology yields
\[ 0 \to  H^{0}_T(H,M^{I_w}) \xrightarrow{N_H} H^{0}_T(H,N_H(M^{I_w})) \to H^{1}(H,W')
\to  H^{1}(H,M^{I_w}) \to 0.
\]
Since the rank of $N_H(M^{I_w})$ is $r_v(M)$ and $[H]=f_w$, we have 
\[ [ H^{1}(H,W')]=\frac{f_w^{r_v(M)}[H^{1}(H,M^{I_w})]}{[H^{0}_T(H,M^{I_w})]}
=\frac{f_w^{r_v(M)}}{h(H,M^{I_w})}.\]
As a result,
\[ L_v^{*}(M,0)=\frac{\lim_{s\to 0}\left(\frac{1-N(v)^{-s}}{s}\right)^{-r_v(M)}}{\det(I-F_w|W)}
=\frac{h(D_w/I_w,M^{I_w})}{(f_w\log N(v))^{r_v(M)}}.\]
\end{proof}

\begin{thm}\label{euler_value_L0}
Let $M$ be a torsion free discrete $G_K$-module of finite type. Then 
\begin{enumerate}
	\item $\mathrm{ord}_{s=0}L_S(M,s)= E_U(j_{*}M)=
	\mathrm{rank}_{\mathbb{Z}} Hom_{U}(j_{*}M,\mathbb{G}_m) $.
	\item $ L^{*}_S(M,0)=\pm\chi_U(j_{*}M) $.
\end{enumerate}
\end{thm}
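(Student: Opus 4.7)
The plan is to reduce the general case to the case of the constant sheaf $\mathbb{Z}$ on the various covers $V_\lambda, V_\mu$, where both sides have already been computed. The bridge between the two is the Ono-type resolution provided by Proposition~\ref{ono_etale}.

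First I would apply Proposition~\ref{ono_etale} to $M$ to obtain a positive integer $n$, finite families of Galois extensions $\{K_\mu\}, \{K_\lambda\}$ of $K$, and a constructible sheaf $\mathcal{Q}$ fitting in an exact sequence
\[ 0 \to (j_{*}M)^{n}\oplus \prod_{\mu}(\pi_{\mu}')_{*}\mathbb{Z} \to \prod_{\lambda}(\pi_{\lambda}')_{*}\mathbb{Z} \to \mathcal{Q} \to 0.\]
Since $\mathcal{Q}$ is supported on a finite set and has finite stalks, tensoring the underlying identity of $G_K$-modules with $\mathbb{C}$ gives the isomorphism of representations
$M_{\mathbb{C}}^{n}\oplus\bigoplus_{\mu}\mathrm{Ind}_{G_{K_\mu}}^{G_K}\mathbb{C}\simeq \bigoplus_{\lambda}\mathrm{Ind}_{G_{K_\lambda}}^{G_K}\mathbb{C}$, and hence (using that induced representations give partial Dedekind zeta functions) the factorization of L-functions
\[ L_S(M,s)^{n}\cdot\prod_{\mu}\zeta_{K_\mu,S_\mu}(s)=\prod_{\lambda}\zeta_{K_\lambda,S_\lambda}(s).\]

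For part (1), I would combine this factorization with the classical fact that $\mathrm{ord}_{s=0}\zeta_{K',S'}(s)=|S'|-1=\mathrm{rank}_{\mathbb{Z}}\mathcal{O}_{K',S'}^{*}=\mathrm{rank}_{\mathbb{Z}}\mathrm{Hom}_{V'}(\mathbb{Z},\mathbb{G}_m)$. On the other hand, $E_U$ is additive in short exact sequences of strongly-$\mathbb{Z}$-constructible sheaves (because each $H^n(D_{\mathcal{F}})$ sits in a long exact sequence and the contribution from the constructible $\mathcal{Q}$ is zero, its $\mathrm{Hom}_U(\mathcal{Q},\mathbb{G}_m)$ being torsion). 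Using Proposition~\ref{finite_inv} and the equality $E_{V'}(\mathbb{Z})=|S'|-1$, both $n\cdot\mathrm{ord}_{s=0}L_S(M,s)$ and $n\cdot E_U(j_*M)$ equal $\sum_{\lambda}(|S_\lambda|-1)-\sum_{\mu}(|S_\mu|-1)$. The equality $E_U(j_*M)=\mathrm{rank}_{\mathbb{Z}}\mathrm{Hom}_{U}(j_*M,\mathbb{G}_m)$ is already contained in Theorem~\ref{thm_chi1_formula}.

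For part (2), the key step is to apply the Euler characteristic $\chi_U$ to the resolution. Here Proposition~\ref{prop_weak_multiplicative} applies: the middle sheaf $\mathcal{F}_2=\prod_\lambda(\pi_\lambda')_*\mathbb{Z}$ satisfies $H^1_{et}(U,\mathcal{F}_2)=\prod_\lambda H^1_{et}(V_\lambda,\mathbb{Z})=0$ and, using Lemma~\ref{Mackey} at each $v\in S$, $H^1(K_v,\mathcal{F}_{2,v})=\prod_{\lambda,w|v}H^1(K_{\lambda,w},\mathbb{Z})=0$ since continuous homomorphisms from profinite groups to $\mathbb{Z}$ are trivial. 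Combining this with Proposition~\ref{Euler_constructible} ($\chi_U(\mathcal{Q})=1$) yields
\[ \chi_U(j_{*}M)^{n}=\frac{\prod_{\lambda}\chi_U((\pi_{\lambda}')_{*}\mathbb{Z})}{\prod_{\mu}\chi_U((\pi_{\mu}')_{*}\mathbb{Z})}.\]
Combining the L-function factorization with Corollary~\ref{euler_piZ}, applied separately to each $K_\mu$ and each $K_\lambda$, gives the analogous identity with $\chi_U((\pi'_\bullet)_*\mathbb{Z})$ replaced by $-L_S^{*}((\pi_\bullet)_*\mathbb{Z},0)$, so that
\[ L_S^{*}(M,0)^{n}=(-1)^{\sum_\lambda 1-\sum_\mu 1}\,\chi_U(j_{*}M)^{n}=\pm\,\chi_U(j_{*}M)^{n}.\]
Since both sides are real numbers and the equation of $n$-th powers determines the absolute value, we conclude $L_S^{*}(M,0)=\pm\chi_U(j_{*}M)$.

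The main obstacle is the multiplicativity step: strongly-$\mathbb{Z}$-constructible Euler characteristics are not known to be multiplicative in general, so one must specifically arrange a resolution whose middle term has vanishing $H^1$ both globally and at every $v\in S$. The Ono resolution by induced sheaves from $\mathbb{Z}$ is exactly what makes this work. The rest of the argument is a matching of orders of vanishing and a matching of leading coefficients via the factorization of L-functions into Dedekind zeta functions, with the sign ambiguity being an unavoidable artifact of extracting $n$-th roots.
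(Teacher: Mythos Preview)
Your proof of Part~(2) is essentially the paper's own argument: Ono's resolution from Proposition~\ref{ono_etale}, the multiplicativity from Proposition~\ref{prop_weak_multiplicative} (you even verify its hypotheses more explicitly than the paper does), $\chi_U(\mathcal{Q})=1$ from Proposition~\ref{Euler_constructible}, and Corollary~\ref{euler_piZ} for the induced pieces.

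For Part~(1) you take a slightly different, though correct, route. The paper argues directly: Theorem~\ref{thm_chi1_formula} gives $E_U(j_*M)=\mathrm{rank}_{\mathbb{Z}}\mathrm{Hom}_U(j_*M,\mathbb{G}_m)$, non-degeneracy of the regulator pairing identifies this rank with that of $\prod_{v\in S}H^0(K_v,M)/H^0(K,M)$, and then Tate's formula \cite[I.3.4]{Tat84} gives $\mathrm{ord}_{s=0}L_S(M,s)=\sum_{v\in S}\mathrm{rank}\,H^0(K_v,M)-\mathrm{rank}\,H^0(K,M)$. You instead reuse the Ono resolution and the additivity of ranks (via the finiteness of $\mathrm{Ext}^1_U(-,\mathbb{G}_m)$) to reduce both sides to the Dedekind zeta case. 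Your approach is self-contained within the paper's machinery and avoids invoking Tate, at the cost of a slightly longer bookkeeping; the paper's approach is shorter but imports an external formula.
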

\begin{proof}
\begin{enumerate}
\item 
From Theorem \ref{thm_chi1_formula}, $E_U(j_{*}M)=
	\mathrm{rank}_{\mathbb{Z}} Hom_{U}(j_{*}M,\mathbb{G}_m) $. 
The ranks of
$Hom_{U}(j_{*}M,\mathbb{G}_m)$ and $\prod_{v \in S} H^0(K_v,M_v)/H^0(K,M)$ are the same because the regulator pairing for $j_{*}M$ is non-degenerate.
Thus, from $\cite[\mbox{I.3.4}]{Tat84}$,
\begin{eqnarray*}
\mathrm{ord}_{s=0}L_S(M,s)&=& \sum_{v \in S} \mathrm{rank}_{\mathbb{Z}} H^0(K_v,M_v) - \mathrm{rank}_{\mathbb{Z}}H^0(K,M)\\
&=& \mathrm{rank}_{\mathbb{Z}} Hom_{U}(j_{*}M,\mathbb{G}_m)= E_U(j_{*}M).
\end{eqnarray*} 
\item
Consider the two exact sequences ($\ref{ono_etale_seq1}$) and ($\ref{ono_etale_seq2}$) from Proposition $\ref{ono_etale}$. Since $\mathcal{Q}$ is a constructible sheaf, $\chi(\mathcal{Q})=1$. By Propositions $\ref{euler_piZ}$
, $\chi_U((\pi_{\lambda}')_{*}\mathbb{Z})=-L^{*}_S((\pi_{\lambda})_{*}\mathbb{Z},0)$ and $\chi_U((\pi_{\mu}')_{*}\mathbb{Z})=-L_S^{*}((\pi_{\mu})_{*}\mathbb{Z},0)$. 
Hence, by Proposition  $\ref{prop_weak_multiplicative}$ and the fact that $N$ is finite
\begin{eqnarray*}
\chi_U(j_{*}M)^n &=& \frac{\prod_{\lambda}\chi_U((\pi_{\lambda}')_{*}\mathbb{Z})}
{\prod_{\mu}\chi_U((\pi_{\mu}')_{*}\mathbb{Z})}  
=  \left|\frac{\prod_{\lambda}L_S^{*}((\pi_{\lambda})_{*}\mathbb{Z},0)}
{\prod_{\mu}L_S^{*}((\pi_{\mu})_{*}\mathbb{Z},0)}\right| 
=  |L_S^{*}(M,0)^n|.
\end{eqnarray*}
Since $L_S^{*}(M,0)$ is a real number, we deduce $L_S^{*}(M,0)=\pm\chi_U(j_{*}M)$.
\end{enumerate}
\end{proof}
\begin{rmk}\label{rmk_sign}
The ambiguity of the sign in equation $L_S^{*}(M,0)=\pm\chi_U(j_{*}M)$ is because we do not know the exact form of sequence ($\ref{ono_etale_seq1}$).  If we know the form of ($\ref{ono_etale_seq1}$) and $n=1$, then we can determine the sign exactly. For example, if $M$ is the character group of a norm torus, then $L_S^{*}(M,0)=\chi_U(j_{*}M)$.
\end{rmk}
\begin{cor}\label{cor_L_ST}
Let $T$ be an algebraic torus over a number field $K$ with character group $\hat{T}$. Let $\Psi^1(j_{*}\hat{T})$ be the map
$H^1_{et}(U,j_{*}\hat{T}) \to \prod_{v\in S}H^1(K_v,\hat{T})$. Then 
$\mathrm{ord}_{s=0}L_S(\hat{T},s)=\mathrm{rank}_{\mathbb{Z}}T(O_{K,S})$ and 
\begin{equation}\label{eqn_cor_LST}
L^{*}_S(\hat{T},0)=\pm \frac{[Ext^1_U(j_{*}\hat{T},\mathbb{G}_m)]R_{T,S}}{[\ker\Psi^1(j_{*}\hat{T})]w_T}.
\end{equation}
\end{cor}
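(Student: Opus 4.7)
The plan is to derive Corollary \ref{cor_L_ST} as a direct specialization of Theorem \ref{euler_value_L0} at $M = \hat T$, combined with the explicit expansion \eqref{eqn_thm_chi1} of Theorem \ref{thm_chi1_formula} for $\chi_U(j_*\hat T)$. The preliminary identification $Hom_U(j_*\hat T, \mathbb{G}_m) \simeq T(O_{K,S})$ follows because the character sheaf of $T$ on $U$ is exactly $j_*\hat T$, and the $O_{K,S}$-points of $T = Hom(\hat T, \mathbb{G}_m)$ are computed by this sheaf of homomorphisms. Granting this, Theorem \ref{euler_value_L0}(1) immediately yields $\mathrm{ord}_{s=0} L_S(\hat T, s) = \mathrm{rank}_{\mathbb{Z}} T(O_{K,S})$, and Theorem \ref{euler_value_L0}(2) gives $L_S^*(\hat T, 0) = \pm \chi_U(j_*\hat T)$.

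To convert this into the formula \eqref{eqn_cor_LST}, I would substitute $\mathcal{F} = j_*\hat T$ in \eqref{eqn_thm_chi1} and match each factor. Torsion-freeness of $\hat T$ makes $H^0_{et}(U, j_*\hat T) = \hat T^{G_K}$ and each $H^0(K_v, \hat T) = \hat T^{D_v}$ torsion-free, so $[H^0_{et}(U, j_*\hat T)_{tor}] = 1$ and $\prod_{v \in S}[H^0(K_v, \hat T)_{tor}] = 1$. Example \ref{ex_pairing}(3) gives $R(j_*\hat T) = R_{T,S}$. Under the identification $Hom_U(j_*\hat T, \mathbb{G}_m) \simeq T(O_{K,S})$, the torsion subgroup is the group of $T$-roots of unity, so its Pontryagin dual has order $w_T$. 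The factors $[Ext^1_U(j_*\hat T, \mathbb{G}_m)]$ and $[\ker \Psi^1(j_*\hat T)]$ carry over unchanged.

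The one remaining factor in \eqref{eqn_thm_chi1} is $[\mathrm{cok}(\delta(j_*\hat T)_{tor})]$, which must be shown to equal $1$. Since the source of $\delta(j_*\hat T)$ is torsion-free, the restriction to torsion subgroups is the zero map, and the cokernel coincides with the torsion of $\prod_{v \in S}\hat T^{D_v}/\hat T^{G_K}$; the vanishing of this torsion is equivalent to saying the diagonal inclusion $\hat T^{G_K} \hookrightarrow \prod_{v \in S}\hat T^{D_v}$ is saturated. I expect this to be the main technical step: an element $(x_v)$ some multiple of which lies in the diagonal image must have all its components agreeing in $\hat T \otimes \mathbb{Q}$, and the common value must then lie in $\bigcap_{v \in S} \hat T^{D_v}$, so the argument needs to identify this intersection with $\hat T^{G_K}$ or else absorb the index into one of the existing factors (via a snake lemma argument akin to the one used in the proof of Proposition \ref{Zn_Euler} or via Artin--Verdier duality). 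Once this is settled, substituting every identification into \eqref{eqn_thm_chi1} produces \eqref{eqn_cor_LST} up to the sign already introduced in Theorem \ref{euler_value_L0}(2).
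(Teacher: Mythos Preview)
Your approach is exactly the paper's: apply Theorem~\ref{euler_value_L0} with $M=\hat T$, expand $\chi_U(j_*\hat T)$ via \eqref{eqn_thm_chi1}, and simplify using torsion-freeness of $\hat T$ together with $Hom_U(j_*\hat T,\mathbb{G}_m)\simeq T(O_{K,S})$ and $R(j_*\hat T)=R_{T,S}$ (the paper cites \cite{Tra16} for the first identification and Example~\ref{ex_pairing}(3) covers the second).

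The only place you diverge is in elevating $[\mathrm{cok}(\delta(j_*\hat T)_{tor})]=1$ to a ``main technical step.'' It is not: it follows immediately from torsion-freeness of $\hat T$, and the paper subsumes it under that single hypothesis without further comment. Your own reduction already shows that a torsion element of $\prod_{v\in S}\hat T^{D_v}/\hat T^{G_K}$ is represented by a diagonal tuple $(x,\ldots,x)$ with $nx\in\hat T^{G_K}$ for some $n\geq 1$. You then propose to identify $\bigcap_{v\in S}\hat T^{D_v}$ with $\hat T^{G_K}$, but this is unnecessary (and can fail): for any $g\in G_K$ one has $g(nx)=nx$, hence $n(gx-x)=0$, hence $gx=x$ since $\hat T$ is torsion-free. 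Thus $x\in\hat T^{G_K}$ already, the quotient is torsion-free, and the factor equals $1$. No saturation or absorption argument is needed.
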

\begin{proof}
From Theorem $\ref{euler_value_L0}$, \eqref{eqn_thm_chi1} and the fact that $\hat{T}$ is torsion free, we have
\[ L_S^{*}(\hat{T},0)
=\pm\frac{[Ext^1_U(j_{*}\hat{T},\mathbb{G}_m)]R(j_{*}\hat{T})}{[\ker\Psi^1(j_{*}\hat{T})][Hom_U(j_{*}\hat{T},\mathbb{G}_m)_{tor}]}.
\]
From $\cite[\mbox{2.3 \& 4.4}]{Tra16}$, $T(O_{K,S})\simeq Hom_U(j_{*}\hat{T},\mathbb{G}_m)$ and $T(O_K)_{tor}\simeq T(O_{K,S})_{tor}$. Moreover, $R(j_{*}\hat{T})\simeq R_{T,S}$. Hence, the corollary follows. 
\end{proof}
\begin{thm}\label{L_ST}
Let $K$ be a number field and $T$ be an algebraic torus over $K$ with character group $\hat{T}$.
Let $h_{T,S}$, $R_{T,S}$ and $w_T$ be the $S$-class number, the $S$-regulator and the number of roots of unity of $T$.
Let $\mathbb{III}^1(T)$ be the Tate-Shafarevich group. Then
\begin{eqnarray}\label{eqn_LST}
L^{*}_S(\hat{T},0)=\pm \frac{h_{T,S}R_{T,S}}{w_T}\frac{[\mathbb{III}^1(T)]}{[H^{1}(K,\hat{T})]}\prod_{v\in S}{[H^1(K_v,\hat{T})]} \prod_{v \notin S}[H^0(\hat{\mathbb{Z}},H^1(I_v,\hat{T}))]
.
\end{eqnarray}
\end{thm}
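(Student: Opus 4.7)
The plan is to derive Theorem \ref{L_ST} directly from Corollary \ref{cor_L_ST}. After cancelling the common factor $R_{T,S}/w_T$, the task reduces to proving
\[
\frac{[Ext^1_U(j_{*}\hat{T},\mathbb{G}_m)]}{[\ker\Psi^1(j_{*}\hat{T})]}
= \frac{h_{T,S}\,[\mathbb{III}^1(T)]}{[H^{1}(K,\hat{T})]}\prod_{v\in S}[H^1(K_v,\hat{T})] \prod_{v \notin S}[H^0(\hat{\mathbb{Z}},H^1(I_v,\hat{T}))].
\]

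First I would invoke the Artin-Verdier duality (as used in the proof of Proposition \ref{Weil_F}) to rewrite $[Ext^1_U(j_*\hat T, \mathbb{G}_m)] = [H^2_c(U, j_*\hat T)]$. A direct kernel/cokernel computation on the compact-support long exact sequence \eqref{compact_coh} in degrees $1$ and $2$ then gives
\[
\frac{[H^2_c(U, j_*\hat T)]}{[\ker\Psi^1(j_*\hat T)]} = \frac{\prod_{v\in S}[H^1(K_v,\hat T)] \cdot [\ker\Psi^2(j_*\hat T)]}{[H^1_{et}(U, j_*\hat T)]},
\]
so that after cancelling the local factors at $v \in S$ it suffices to prove
\[
\frac{[\ker\Psi^2(j_*\hat T)]}{[H^1_{et}(U, j_*\hat T)]} = \frac{h_{T,S}\,[\mathbb{III}^1(T)]}{[H^{1}(K,\hat{T})]}\prod_{v\notin S}[H^0(\hat{\mathbb{Z}},H^1(I_v,\hat{T}))].
\]

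Next I would appeal to the Leray five-term exact sequence for $j\colon \mathrm{Spec}(K)\to U$ with coefficients in $\hat T$. Since $R^1 j_*\hat T$ is a sum of skyscraper sheaves with stalks $H^1(I_v,\hat T)$ at the closed points $v$ of $U$, this reads
\[
0 \to H^1_{et}(U, j_*\hat T) \to H^1(K, \hat T) \to \bigoplus_{v \notin S} H^0(\hat{\mathbb{Z}}, H^1(I_v, \hat T)) \to H^2_{et}(U, j_*\hat T) \to H^2(K, \hat T).
\]
Reading off orders yields $[H^1(K,\hat T)]\cdot[\ker(H^2_{et}\to H^2(K,\hat T))] = [H^1_{et}(U,j_*\hat T)]\cdot \prod_{v\notin S}[H^0(\hat{\mathbb{Z}},H^1(I_v,\hat T))]$, so the problem reduces to the single identity
\[
[\ker\Psi^2(j_*\hat T)] = [\ker(H^2_{et}(U,j_*\hat T)\to H^2(K,\hat T))] \cdot h_{T,S} \cdot [\mathbb{III}^1(T)].
\]

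To finish, I would factor $\Psi^2(j_*\hat T)$ through $H^2(K,\hat T)$; a kernel-cokernel computation then produces
\[
[\ker\Psi^2(j_*\hat T)] = [\ker(H^2_{et}\to H^2(K,\hat T))] \cdot [\mathrm{image}(H^2_{et}\to H^2(K,\hat T)) \cap \mathbb{III}^2_S(\hat T)],
\]
where $\mathbb{III}^2_S(\hat T) := \ker(H^2(K,\hat T) \to \prod_{v\in S}H^2(K_v,\hat T))$. The main obstacle is the final identification of this intersection with $h_{T,S}\,[\mathbb{III}^1(T)]$, and it rests on two classical ingredients: the Poitou-Tate duality $[\mathbb{III}^2(\hat T)] = [\mathbb{III}^1(T)]$ for the torus $T$, and an Ono-style exact sequence relating $h_{T,S}$ to the cokernel of $H^2_{et}(U,j_*\hat T) \to \mathbb{III}^2_S(\hat T)$ via the Picard group of the N\'eron model of $T$ (cf.~\cite{Ono61}). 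Once both inputs are in place, all the orders combine to give the stated formula, and Theorem \ref{L_ST} follows.
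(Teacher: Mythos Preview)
The paper's proof is a one-line application of Corollary~\ref{cor_L_ST} together with the $S$-class number formula
\[
h_{T,S}=\frac{[Ext^1_U(j_{*}\hat T,\mathbb{G}_m)]\,[H^1(K,\hat T)]}
{[\ker\Psi^1(j_{*}\hat T)]\,[\mathbb{III}^1(T)]\prod_{v\in S}[H^1(K_v,T)]\prod_{v\notin S}[H^0(\hat{\mathbb{Z}},H^1(I_v,\hat T))]}
\]
which is quoted from \cite{Tra16}. Your reduction to exactly this identity (after cancelling $R_{T,S}/w_T$) is correct, and so are your intermediate manipulations: the Artin--Verdier identification $[Ext^1_U(j_*\hat T,\mathbb{G}_m)]=[H^2_c(U,j_*\hat T)]$, the count from the compact-support sequence, and the Leray five-term sequence for $j$ are all valid and are in fact ingredients of the argument in \cite{Tra16}. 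So the overall route is not different from the paper's; you are re-deriving the cited formula rather than quoting it.

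The genuine gap is your last step. You need
\[
[\mathrm{image}(H^2_{et}(U,j_*\hat T)\to H^2(K,\hat T))\cap \mathbb{III}^2_S(\hat T)] \;=\; h_{T,S}\,[\mathbb{III}^1(T)],
\]
and here you only gesture at ``an Ono-style exact sequence relating $h_{T,S}$ to the cokernel via the Picard group of the N\'eron model.'' No such sequence is written down, the reference \cite{Ono61} does not supply it, and the identification of $h_{T,S}=[Cl_S(T)]$ with this particular cohomological cokernel is precisely the nontrivial content of \cite{Tra16}. Without producing that exact sequence and checking the orders, the argument is not complete. The cleanest fix is exactly what the paper does: cite the class number formula from \cite{Tra16} and combine it with Corollary~\ref{cor_L_ST} and the local duality $[H^1(K_v,T)]=[H^1(K_v,\hat T)]$.
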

\begin{proof}
From $\cite{Tra16}$, we have the following formula for the $S$-class number of $T$
\begin{eqnarray}\label{eqn_hTS}
 h_{T,S} = 
\frac{[Ext^1_U(j_{*}\hat{T},\mathbb{G}_m)][H^1(K,\hat{T})] }
{[\mathrm{ker}\Psi^1(j_{*}\hat{T})][\mathbb{III}^1(T)]
\prod_{v\in S}[H^1(K_v,{T})]
\prod_{v\notin S}[H^0(\hat{\mathbb{Z}},H^1(I_v,\hat{T}))]
}.
\end{eqnarray}
Thus, the theorem follows from the above formula, Corollary \ref{cor_L_ST} and the fact that
$[H^1(K_v,{T})]=[H^1(K_v,\hat{T})]$.
\end{proof}

Formula \eqref{eqn_LST} yields the following formula relating $h_TR_T$ and $h_{T,S}R_{T,S}$. 

\begin{prop}\label{prop_hR_hRS}
Let $T$ be an algebraic torus defined over a number field $K$. Let $S$ be a finite set of places of $K$ containing $S_{\infty}$. Suppose $L/K$ is a Galois splitting field  of $T$.
For each finite place $v$ of $K$, let $w$ be a place of $L$ dividing $v$. Let $D_w$, $I_w$ be the decomposition and inertia groups of $w$ and let $f_w$ be the inertia degree. 
 We write $r_v(\hat{T})$ for $\mathrm{rank}_{\mathbb{Z}}H^0(K_v,\hat{T})$. Then 
\begin{equation}\label{prop_hTS_eqn}
\frac{h_{T,S}R_{T,S}}{h_TR_T} = \prod_{v\in S-S_{\infty}} \frac{[H^0(\hat{\mathbb{Z}},H^1(I_v,\hat{T}))]}{[H^1(K_v,\hat{T})]} 
\frac{(f_w\log N(v))^{r_v(\hat{T})}}{h(D_w/I_w,\hat{T}^{I_w})}.
\end{equation}
\end{prop}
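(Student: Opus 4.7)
The plan is to compare Theorem \ref{L_ST} applied to the set $S$ with the same theorem applied to the minimal set $S_\infty$, and to extract the ratio $h_{T,S}R_{T,S}/h_TR_T$ from the resulting identity. Setting $L^{*}(\hat T,0):=L^{*}_{S_\infty}(\hat T,0)$, Theorem \ref{L_ST} produces two formulas whose quotient is clean: the factors $w_T$, $[\mathbb{III}^1(T)]$ and $[H^1(K,\hat T)]$ cancel, and the only surviving local factors are those at the finite places in $S-S_\infty$. Explicitly,
\[ \frac{L_S^{*}(\hat T,0)}{L^{*}(\hat T,0)} = \frac{h_{T,S}R_{T,S}}{h_TR_T}\prod_{v\in S-S_\infty}\frac{[H^1(K_v,\hat T)]}{[H^0(\hat{\mathbb{Z}},H^1(I_v,\hat T))]}. \]
Here I use that enlarging $S$ adds the factor $[H^1(K_v,\hat T)]$ to the product over $S$ and removes the factor $[H^0(\hat{\mathbb{Z}},H^1(I_v,\hat T))]$ from the product over $v\notin S$ for each $v\in S-S_\infty$. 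The sign ambiguities from the two applications of Theorem \ref{L_ST} agree (both arise from the same Ono-type resolution used in Theorem \ref{euler_value_L0}), and in any case positivity of both sides forces the ambiguity away.

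Next I compute the left-hand ratio directly from the Euler product. Since $L_S(\hat T,s)=L(\hat T,s)\prod_{v\in S-S_\infty}L_v(\hat T,s)^{-1}$, taking leading terms at $s=0$ gives
\[ \frac{L_S^{*}(\hat T,0)}{L^{*}(\hat T,0)} = \prod_{v\in S-S_\infty} L_v^{*}(\hat T,0)^{-1}. \]
Proposition \ref{prop_local_L} evaluates each local leading coefficient:
\[ L_v^{*}(\hat T,0)^{-1} = \frac{(f_w\log N(v))^{r_v(\hat T)}}{h(D_w/I_w,\hat T^{I_w})}. \]

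Combining the two expressions for $L_S^{*}(\hat T,0)/L^{*}(\hat T,0)$ and solving for $h_{T,S}R_{T,S}/h_TR_T$ immediately yields \eqref{prop_hTS_eqn}. There is no significant obstacle here; the proposition is essentially a bookkeeping consequence of Theorem \ref{L_ST} and the local computation in Proposition \ref{prop_local_L}. The only minor point to verify is the cancellation of the sign: both sides of \eqref{prop_hTS_eqn} are manifestly positive real numbers, so no sign tracking is needed beyond noting that the two instances of Theorem \ref{L_ST} have identical sign, making the ratio sign-unambiguous.
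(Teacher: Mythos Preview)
Your proposal is correct and follows essentially the same approach as the paper: apply Theorem \ref{L_ST} for both $S$ and $S_\infty$, take the quotient so that the global factors cancel, identify $L_S^{*}(\hat T,0)/L^{*}(\hat T,0)$ with $\prod_{v\in S-S_\infty}L_v^{*}(\hat T,0)^{-1}$, and then invoke Proposition \ref{prop_local_L}. Your explicit remark that positivity removes the sign ambiguity is a welcome addition that the paper leaves implicit.
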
  
\begin{proof}
From \eqref{eqn_LST} and the fact that $L_S^{*}(\hat{T},0)/L^{*}(\hat{T},0)=\prod_{v\in S-S_{\infty}}L_v^{*}(\hat{T},0)^{-1}$, we have
\begin{eqnarray}
\frac{h_{T,S}R_{T,S}}{h_TR_T} =  \prod_{v\in S-S_{\infty}} \frac{[H^0(\hat{\mathbb{Z}},H^1(I_v,\hat{T}))]}{[H^1(K_v,\hat{T})]L_v^{*}(\hat{T},0)}. 
\end{eqnarray}
Thus, \eqref{prop_hTS_eqn} follows from Proposition \ref{prop_local_L}.
\end{proof}
\begin{examp}
If $T=\mathbb{G}_m$ then \eqref{prop_hTS_eqn} becomes 
$ h_{K,S}R_{K,S}= h_KR_K \prod_{v\in S-S_{\infty}}  \log N(v).$
\end{examp}

\subsection{The functional equation}
Using the functional equation of Artin $L$-functions, we can obtain a formula for $L^{*}(\hat{T},1)$ up to signs.
Let $f(\hat{T})$ be the Artin conductor of $\hat{T}$ and $d=\mathrm{rank}_{\mathbb{Z}}\hat{T}$. We write $A:=N_{K/\mathbb{Q}}(f(\hat{T}))|\Delta_K|^d$. For each infinite place $v$ of $K$ we define $r_v(\hat{T}):=\mathrm{rank}_{\mathbb{Z}}H^0(K_v,\hat{T})$ and
\[ L_v(\hat{T},s):=\left\{ \begin{array}{ll}
\Gamma_{\mathbb{C}}(s)^d & \mbox{$v$ complex} \\
\Gamma_{\mathbb{R}}(s)^{r_v(\hat{T})}\Gamma_{\mathbb{R}}(s+1)^{d-r_v(\hat{T})} &
\mbox{$v$ real.}
\end{array}
\right. \]
The complete Artin $L$-function is defined as 
\[ \Lambda(\hat{T},s):=A^{s/2}\prod_{v\in S_{\infty}}L_v(\hat{T},s)L(\hat{T},s).\]
Then from \cite[\mbox{page 18}]{Tat84}, there exists a constant $w(\hat{T})=\pm 1$ such that 
\[ \Lambda(\hat{T},s)=w(\hat{T})\Lambda(\hat{T},1-s).\]
\begin{thm}\label{thm_LST_1}
Let $K$ be a number field and $T$ be an algebraic torus of dimension $d$ over $K$ with character group $\hat{T}$.
Let $h_{T}$, $R_{T}$ and $w_T$ be the class number, the regulator and the number of roots of unity of $T$.
Let $\mathbb{III}^1(T)$ be the Tate-Shafarevich group and let 
\[ \Omega_{\infty}(\hat{T}):= \prod_{\mbox{$v$ real}}2^{r_v(\hat{T})}\pi^{d-r_v(\hat{T})}[H^1(K_v,\hat{T})] \prod_{\mbox{$v$ complex}} (2\pi)^d.\]
Then $\mathrm{ord}_{s=1}L(\hat{T},s)=-\mathrm{rank}_{\mathbb{Z}}H^0(K,\hat{T})$ and 
\begin{eqnarray}\label{eqn_LST_1}
L^{*}(\hat{T},1)=\pm \frac{h_{T}R_{T}}{w_T}\frac{[\mathbb{III}^1(T)]}{[H^{1}(K,\hat{T})]} 
\frac{\Omega_{\infty}(\hat{T})}{N_{K/\mathbb{Q}}(f(\hat{T}))^{1/2}|\Delta_K|^{d/2}}
\prod_{v \notin S_{\infty}}[H^0(\hat{\mathbb{Z}},H^1(I_v,\hat{T}))].
\end{eqnarray}
\end{thm}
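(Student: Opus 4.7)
The plan is to deduce the formula from Theorem \ref{L_ST} (taken with $S=S_\infty$) together with the functional equation $\Lambda(\hat T,s)=w(\hat T)\Lambda(\hat T,1-s)$. The only real work is computing the archimedean $\Gamma$-factors at $s=0$ and $s=1$ carefully enough to track both the orders of vanishing and the leading coefficients.

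First, I would establish the order of vanishing at $s=1$. From Theorem \ref{euler_value_L0} applied to $M=\hat T$ and $S=S_\infty$, one has $\mathrm{ord}_{s=0}L(\hat T,s)=\sum_{v\in S_\infty}r_v(\hat T)-\mathrm{rank}_{\mathbb Z}H^0(K,\hat T)$. On the other hand, $\Gamma_{\mathbb R}(s)=\pi^{-s/2}\Gamma(s/2)$ has a simple pole at $s=0$ and $\Gamma_{\mathbb R}(s+1)$ is regular there, while $\Gamma_{\mathbb C}(s)=2(2\pi)^{-s}\Gamma(s)$ has a simple pole at $s=0$; this gives $\mathrm{ord}_{s=0}L_\infty(\hat T,s)=-\sum_{v\in S_\infty}r_v(\hat T)$ (using $r_v(\hat T)=d$ for complex $v$). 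Hence $\mathrm{ord}_{s=0}\Lambda(\hat T,s)=-\mathrm{rank}_{\mathbb Z}H^0(K,\hat T)$, and by the functional equation the same is true at $s=1$. Since $L_\infty(\hat T,s)$ is finite and nonzero at $s=1$, we conclude $\mathrm{ord}_{s=1}L(\hat T,s)=-\mathrm{rank}_{\mathbb Z}H^0(K,\hat T)$.

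Next, I would take leading coefficients on both sides of $A^{s/2}L_\infty(\hat T,s)L(\hat T,s)=\pm A^{(1-s)/2}L_\infty(\hat T,1-s)L(\hat T,1-s)$ as $s\to 0$. Writing $r=\mathrm{rank}_{\mathbb Z}H^0(K,\hat T)$, this yields
\[
L_\infty^*(\hat T,0)\,L^*(\hat T,0)=\pm A^{1/2}\,L_\infty(\hat T,1)\,L^*(\hat T,1),
\]
where $L_\infty^*(\hat T,0)$ denotes the leading Laurent coefficient at $s=0$. From $\lim_{s\to0}s\,\Gamma_{\mathbb R}(s)=2$ and $\lim_{s\to0}s\,\Gamma_{\mathbb C}(s)=2$, together with $\Gamma_{\mathbb R}(1)=1$, $\Gamma_{\mathbb R}(2)=\pi^{-1}$, and $\Gamma_{\mathbb C}(1)=\pi^{-1}$, a direct calculation gives
\[
L_\infty^*(\hat T,0)=\prod_{v\text{ real}}2^{r_v(\hat T)}\prod_{v\text{ complex}}2^d,\qquad
L_\infty(\hat T,1)=\prod_{v\text{ real}}\pi^{-(d-r_v(\hat T))}\prod_{v\text{ complex}}\pi^{-d},
\]
so that the ratio $L_\infty^*(\hat T,0)/L_\infty(\hat T,1)$ equals $\Omega_\infty(\hat T)\cdot\prod_{v\in S_\infty}[H^1(K_v,\hat T)]^{-1}$ (using that $[H^1(K_v,\hat T)]=1$ for complex $v$).

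Finally, I would substitute the formula for $L^*(\hat T,0)$ from Theorem \ref{L_ST} with $S=S_\infty$ into
\[
L^*(\hat T,1)=\pm A^{-1/2}\,\frac{L_\infty^*(\hat T,0)}{L_\infty(\hat T,1)}\,L^*(\hat T,0).
\]
The factor $\prod_{v\in S_\infty}[H^1(K_v,\hat T)]$ coming from Theorem \ref{L_ST} cancels against the denominator in the archimedean ratio, $A^{-1/2}$ expands as $N_{K/\mathbb Q}(f(\hat T))^{-1/2}|\Delta_K|^{-d/2}$, and the remaining factors assemble into \eqref{eqn_LST_1}. The main obstacle is simply bookkeeping of the archimedean $\Gamma$-factors; there is no cohomological difficulty beyond what Theorem \ref{L_ST} already supplies.
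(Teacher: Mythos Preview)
Your proposal is correct and follows essentially the same approach as the paper: both compute the archimedean $\Gamma$-factors at $s=0$ and $s=1$, use the functional equation to derive the ratio $L^*(\hat T,1)/L^*(\hat T,0)$, and then substitute the formula from Theorem~\ref{L_ST} with $S=S_\infty$. The only difference is organizational---the paper computes $\lim_{s\to 0}\Lambda(\hat T,s)s^{r_K(\hat T)}$ and $\lim_{s\to 0}\Lambda(\hat T,1-s)s^{r_K(\hat T)}$ directly rather than separating out $L_\infty^*(\hat T,0)$ and $L_\infty(\hat T,1)$---but the underlying computation and the role of the $[H^1(K_v,\hat T)]$ cancellation are identical.
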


\begin{proof}
Let $r_K(\hat{T}):=\mathrm{rank}_{\mathbb{Z}}H^0(K,\hat{T})$. Then $\lim_{s\to 0}\Lambda(\hat{T},s)s^{r_K(\hat{T})}$ is given by
\begin{eqnarray*}
&=& 
\lim_{s\to 0} A^{s/2}\prod_{\mbox{$v$ real}} (\Gamma_{\mathbb{R}}(s)s)^{r_v(\hat{T})}
(\Gamma_{\mathbb{R}}(s+1))^{d-r_v(\hat{T})} 
\prod_{\mbox{$v$ complex}} (\Gamma_{\mathbb{C}}(s)s)^{d}
\frac{L(\hat{T},s)}{s^{\sum_{v\in S_{\infty}}r_v(\hat{T})-r_K(\hat{T})}} \\
&=& \left(\prod_{\mbox{$v$ real}}2^{r_v(\hat{T})}\prod_{\mbox{$v$ complex}} 2^d\right) L^{*}(\hat{T},0).
\end{eqnarray*}
On the other hand, $\lim_{s\to 0}\Lambda(\hat{T},1-s)s^{r_K(\hat{T})}$ is given by
\begin{eqnarray*}
&=& 
\lim_{s\to 0} A^{(1-s)/2}\prod_{\mbox{$v$ real}} \Gamma_{\mathbb{R}}(1-s)^{r_v(\hat{T})}
\Gamma_{\mathbb{R}}(2-s)^{d-r_v(\hat{T})} 
\prod_{\mbox{$v$ complex}} \Gamma_{\mathbb{C}}(1-s)^{d}
{L(\hat{T},1-s)}{s^{r_K(\hat{T})}} \\
&=& A^{1/2}\left(\prod_{\mbox{$v$ real}}\pi^{r_v(\hat{T})-d}\prod_{\mbox{$v$ complex}} \pi^{-d}\right) (-1)^{r_K(\hat{T})}L^{*}(\hat{T},1).
\end{eqnarray*}
As $\Lambda(\hat{T},1-s)=\pm \Lambda(\hat{T},s)$, we deduce
\begin{equation}\label{L1_L0}
\frac{L^{*}(\hat{T},1)}{L^{*}(\hat{T},0)}=\pm \frac{\prod_{\mbox{$v$ real}}2^{r_v(\hat{T})}\pi^{d-r_v(\hat{T})}\prod_{\mbox{$v$ complex}} (2\pi)^d}
{N_{K/\mathbb{Q}}(f(\hat{T}))^{1/2}|\Delta_K|^{d/2}}.
\end{equation}
The theorem then follows from \eqref{eqn_LST}. 
\end{proof}
\subsection{Examples : Norm Tori of Quadratic fields}
Let $d$ be a square-free integer. Let $K=\mathbb{Q}(\sqrt{d})$ and $T=R_{K/\mathbb{Q}}^{(1)}(\mathbb{G}_m)$ be the norm torus. We want to illustrate the results of this section using $T$. 
 Let $\pi : Spec(K)\to Spec(\mathbb{Q})$ and $\pi' : Spec(O_K)\to Spec(\mathbb{Z})=X$.  Then $\hat{T}$ satisfies the exact sequence
\[ 0 \to \mathbb{Z} \to \pi_{*}\mathbb{Z} \to \hat{T} \to 0.\]
Let $j: Spec(\mathbb{Q})\to X$. As $R^1j_{*}\mathbb{Z}=0$, we have
\[ 0 \to \mathbb{Z} \to \pi'_{*}\mathbb{Z} \to j_{*}\hat{T} \to 0\]
which yields the exact sequence

\[
0 \to Hom_{X}(j_{*}\hat{T},\mathbb{G}_m) \to O_K^{*} \xrightarrow{N_{K/\mathbb{Q}}} \{\pm 1\} \to 
Ext^1_{X}(j_{*}\hat{T},\mathbb{G}_m) \to Pic(O_K) \to 0.  \]
Note that $\ker(\Psi^1(j_{*}\hat{T}))=0$ by \cite[\mbox{Remark 5.3}]{Tra16}. Thus,
\[ L^{*}(\hat{T},0)= \frac{[Ext^1_X(j_{*}\hat{T},\mathbb{G}_m)]R_{T}}{w_T}.\]
\begin{enumerate}
\item 
Suppose $d<0$ i.e. $K$ is an imaginary quadratic field. We have $O_K^{*}=\mu_K$ which is finite. Hence, $Hom_{X}(j_{*}\hat{T},\mathbb{G}_m)$ is finite and $R_T=1$. Moreover, $N_{K/\mathbb{Q}}(\mu_K)=\{1\}$. Therefore, 
$Hom_{X}(j_{*}\hat{T},\mathbb{G}_m)\simeq \mu_K$ and 
$[Ext^1_{X}(j_{*}\hat{T},\mathbb{G}_m)]=2h_K$. As a result,
\begin{equation}
L^{*}(\hat{T},0)=\frac{2h_K}{w_K}.
\end{equation}
Let $v$ be the only infinite prime of $\mathbb{Q}$. Then $H^0(K_v,\hat{T})=0$ hence $r_v(\hat{T})=0$. Moreover, by the conductor-discriminant formula $|f(\hat{T})|=|\Delta_{\mathbb{Q}(\sqrt{-d})/\mathbb{Q}}|$. Therefore, by \eqref{L1_L0} 
\begin{equation}
L^{*}(\hat{T},1)=\pm \frac{2\pi h_K}{\sqrt{|\Delta_{\mathbb{Q}(\sqrt{-d})/\mathbb{Q}}|}w_K}.
\end{equation}
\item Suppose $d>0$ i.e. $K$ is a real quadratic field. Let $\epsilon$ be the fundamental unit of $K$. Then $O_K^{*}\simeq \{\pm 1\}\times \epsilon^{\mathbb{Z}}$.
\begin{itemize}
\item If $N_{K/\mathbb{Q}}(\epsilon)=1$ then $Hom_{X}(j_{*}\hat{T},\mathbb{G}_m)\simeq O_K^{*}$. Hence, $R_T=\log(\epsilon)$ and $w_T=2$. Moreover, $[Ext^1_{X}(j_{*}\hat{T},\mathbb{G}_m)]=2h_K$. 
\item If $N_{K/\mathbb{Q}}(\epsilon)=-1$ then $Hom_{X}(j_{*}\hat{T},\mathbb{G}_m)\simeq \{\pm 1\}\times \epsilon^{2\mathbb{Z}}$. Hence, $R_T=2\log(\epsilon)$ and $w_T=2$. Moreover, $[Ext^1_{X}(j_{*}\hat{T},\mathbb{G}_m)]=h_K$. 
\end{itemize}
Either way, we still obtain 
\begin{equation}
L^{*}(\hat{T},0)=h_K\log(\epsilon).
\end{equation}
Let $v$ be the only infinite prime of $\mathbb{Q}$. Then $H^0(K_v,\hat{T})\simeq \mathbb{Z}$ hence $r_v(\hat{T})=1$. Moreover, by the conductor-discriminant formula $|f(\hat{T})|=|\Delta_{\mathbb{Q}(\sqrt{d})/\mathbb{Q}}|$. Therefore, by \eqref{L1_L0}
\begin{equation}
L^{*}(\hat{T},1)=\pm \frac{2h_K\log(\epsilon)}{\sqrt{|\Delta_{\mathbb{Q}(\sqrt{d})/\mathbb{Q}}|}}.
\end{equation}
\end{enumerate}

\section{Appendix: Determinants and Torsions}
We review some results about determinants of exact sequences and orders of torsion subgroups of finitely generated abelian groups.
\subsection{Determinants of Exact Sequences}
For $n\geq 1$, consider the following exact sequence of vector spaces over $\mathbb{R}$
\begin{equation*}
0 \to V_0 \xrightarrow{T_0} V_1 \xrightarrow{T_1} ... \xrightarrow{T_{n-1}} V_n \to 0  \quad \quad  (\mathcal{E}).
\end{equation*}
Let $B_i$ be an ordered basis for $V_i$. We want to define the determinant $\nu(\mathcal{E})$ of $(\mathcal{E})$ with respect to the bases $\{B_i\}$. We shall do so inductively.
\begin{enumerate}
\item If $n=1$, then  $\nu(\mathcal{E}):=|\det(T_0)|$ with respect to the given bases.
\item If $n=2$, suppose $B_0=\{u_i\}_{i=1}^{r}$, $B_1=\{v_i\}_{i=1}^{r+s}$ and $B_2=\{w_i\}_{i=1}^{s}$.
For $i=1,...,s$, let ${T_1}^{-1}(w_i)$ be any preimage of $w_i$ under $T_2$. We can form the following elements 
$\wedge_{i=1}^{r+s} v_i$ and $(\wedge_{i=1}^{r} T_0(u_i)) \wedge (\wedge_{i=1}^{s} T_1^{-1}(w_i))$ of $\wedge_{i=1}^{r+s} V_1$. Since $\wedge_{i=1}^{r+s} V_1$ is a 1 dimensional vector space over $\mathbb{R}$, there exists a unique positive real number $\delta$ such that 
\[  (\wedge_{i=1}^{r} T_0(u_i)) \wedge (\wedge_{i=1}^{s} T_1^{-1}(w_i)) = \pm \delta (\wedge_{i=1}^{r+s} v_i). \]
Note that the choice of the preimages of $w_i$  under $T_1$ does not affect 
$(\wedge_{i=1}^{r} T_0(u_i)) \wedge (\wedge_{i=1}^{s} T_1^{-1}(w_i))$.
Therefore we can define $\nu(\mathcal{E}):=\delta$.
\item If $n\geq 3$, suppose we have defined $\nu(\mathcal{E})$ for $n=N$. We want to define $\nu(\mathcal{E})$ for $n=N+1$.
Let $I$ be the image of $T_{N-1}$ and choose any basis for $I$. We split $(\mathcal{E})$ into 
\[  0 \to V_0 \xrightarrow{T_0} V_1 \xrightarrow{T_1} ... \xrightarrow{T_{N-2}} V_{N-1} \xrightarrow{T_{N-1}} I \to 0  \quad \quad  (\mathcal{E}_1),
\]
\[ 0 \to I \to V_{N} \xrightarrow{T_N} V_{N+1} \to 0 \quad \quad  (\mathcal{E}_2). \]
The determinant of ($\mathcal{E}$) defined to be
$ \nu(\mathcal{E}):=\nu(\mathcal{E}_1)\nu(\mathcal{E}_2)^{(-1)^{N-1}} $. 
Note that $\nu(\mathcal{E})$ is independent of the choice of basis for $I$.
\end{enumerate}

\begin{rmk}\label{rmk_determinant}
\begin{enumerate}
\item Let ($\mathcal{E}$) be an exact sequence of $\mathbb{R}$-vector spaces
\begin{equation*}
0 \to V_0 \xrightarrow{T_0} V_1 \xrightarrow{T_1} ... \xrightarrow{T_{n-1}} V_n \to 0  \quad \quad  (\mathcal{E}).
\end{equation*}
We split ($\mathcal{E}$) into two exact sequences ($\mathcal{E}_1$) and ($\mathcal{E}_2$) such that $\beta\alpha=T_i$. 
\begin{equation*}
0 \to V_0 \xrightarrow{T_0} V_1 \xrightarrow{T_1} ... \xrightarrow{T_{i-1}} V_i \xrightarrow{\alpha} J \to 0  \quad \quad  (\mathcal{E}_1).
\end{equation*}
\begin{equation*}
0 \to J \xrightarrow{\beta} V_{i+1} \xrightarrow{T_{i+1}} ... \xrightarrow{T_{n-1}} V_n \to 0  \quad \quad  (\mathcal{E}_2).
\end{equation*}
Then by an induction argument, we can show that 
$\nu(\mathcal{E})=\nu(\mathcal{E}_1)\nu(\mathcal{E}_2)^{(-1)^{i}}$.
\item Let ($\mathcal{E}^{*}$) be the dual sequence of ($\mathcal{E}$) and let $B_i^{*}$ be the dual basis of $B_i$. Then with respect to $\{B_i^{*}\}$ and $\{B_i\}$, 
$ \nu(\mathcal{E}^{*})=\nu(\mathcal{E})^{-1}$.
\end{enumerate}
\end{rmk}

\begin{lemma}\label{det_3x3}
Consider the following commutative diagram 
\[ \xymatrixrowsep{0.25in}\xymatrix{
      &  0 \ar[d] & 0 \ar[d] & 0 \ar[d] \\
  0 \ar[r] & A_1 \ar[d]^{\theta_1} \ar[r]^{\phi_A}  & A_2  \ar[d]^{\theta_2} \ar[r]^{\psi_A}  &   A_3 \ar[d]^{\theta_3} \ar[r] & 0
 & (\mathcal{E}_A)\\
   0 \ar[r] & B_1 \ar[d]^{\tau_1} \ar[r]^{\phi_B} &   B_2  \ar[d]^{\tau_2} \ar[r]^{\psi_B}  & B_3  \ar[d]^{\tau_3} \ar[r] &  0
 & (\mathcal{E}_B)\\
    0 \ar[r] & C_1 \ar[d] \ar[r]^{\phi_C}  &   C_2 \ar[d]  \ar[r]^{\psi_C}  &   C_3 \ar[d] \ar[r] & 0
 & (\mathcal{E}_C)     \\
  & 0 & 0 & 0 \\
 & (\mathcal{E}_1) & (\mathcal{E}_2) & (\mathcal{E}_3)}
 \]
Let {$\{a_i,b_i,c_i\}_{i=1}^3$} be bases for $\{A_i,B_i,C_i\}_{i=1}^3$. 
Then with respect to these bases
\[ \frac{\nu(\mathcal{E}_2)}{\nu(\mathcal{E}_1)\nu(\mathcal{E}_3)}=\frac{\nu(\mathcal{E}_B)}{\nu(\mathcal{E}_A)\nu(\mathcal{E}_C)}. \]
\end{lemma}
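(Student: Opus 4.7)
The plan is to combine an invariance property of the ratio under changes of basis with a direct verification in a normal form. The invariance comes straight from the wedge-product definition of $\nu$ given in the paper: for a short exact sequence $0 \to X \to Y \to Z \to 0$ with ordered bases, rescaling the basis of $X$ or of $Z$ by a matrix of absolute determinant $d$ multiplies $\nu$ by $d$, while rescaling the basis of $Y$ by such a matrix multiplies $\nu$ by $d^{-1}$. Tracking these scalings through the nine terms of the diagram, one checks case-by-case that any change of basis in any single one of $A_i, B_i, C_i$ multiplies both sides $\nu(\mathcal{E}_2)/(\nu(\mathcal{E}_1)\nu(\mathcal{E}_3))$ and $\nu(\mathcal{E}_B)/(\nu(\mathcal{E}_A)\nu(\mathcal{E}_C))$ by the same factor; hence the ratio of the two sides is a genuine invariant of the diagram, independent of the bases $\{a_i,b_i,c_i\}$.

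It therefore suffices to exhibit one convenient choice of bases that makes the identity transparent. I would start with arbitrary bases on the corner spaces $A_1, A_3, C_1, C_3$ and extend as follows: choose sections of $\tau_1, \tau_3, \psi_A, \psi_C$, and take the basis of each of $B_1, B_3, A_2, C_2$ to consist of the image of the basis of its left-hand neighbor together with the section-lift of the basis of its right-hand neighbor. With these choices, the $n=2$ definition of $\nu$ immediately gives $\nu(\mathcal{E}_1) = \nu(\mathcal{E}_3) = \nu(\mathcal{E}_A) = \nu(\mathcal{E}_C) = 1$. I would then build the basis of $B_2$ using the row $\mathcal{E}_B$: take $\phi_B$ of the basis of $B_1$ constructed above, together with a section-lift of the basis of $B_3$. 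This forces $\nu(\mathcal{E}_B)=1$ as well, and the identity reduces to the claim $\nu(\mathcal{E}_2)=1$.

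To verify this last equality one compares the row-compatible basis of $B_2$ just built with the alternative column-compatible basis coming from $\theta_2$ of the basis of $A_2$ together with a section-lift of the basis of $C_2$. On the $A_1$-indexed block the two constructions agree on the nose, because $\phi_B\theta_1 = \theta_2\phi_A$; on each of the $A_3$-, $C_1$-, and $C_3$-indexed blocks, the difference of the two candidate basis vectors maps to zero under either $\tau_2$ or $\psi_B$ (by commutativity of the diagram and the section property), hence lies in an earlier block (either $\theta_2(A_2)$ or $\phi_B(B_1)$). Ordering the four blocks accordingly makes the change-of-basis matrix block upper-triangular with identity blocks on the diagonal, so its absolute determinant is $1$, whence $\nu(\mathcal{E}_2)=1$. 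The main obstacle is precisely this last step: one must carefully track which mixing terms land in which strictly earlier block, invoking all three squares of the commutative diagram, so that the unipotent block structure becomes visible and both ratios collapse to $1$ simultaneously.
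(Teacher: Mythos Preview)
Your strategy---reduce via basis-invariance to a normal form where five of the six $\nu$'s equal $1$, then verify $\nu(\mathcal{E}_2)=1$ by comparing the row-built and column-built bases of $B_2$---is sound and can be made into a correct proof. One imprecision to fix: on the $C_3$-indexed block the difference $s_B\sigma_3(c_3)-\sigma_2 s_C(c_3)$ is \emph{not} killed by $\tau_2$ alone nor by $\psi_B$ alone, only by the composite $\tau_3\psi_B=\psi_C\tau_2$; hence it lies in $\phi_B(B_1)+\theta_2(A_2)$ rather than in a single summand. Consequently no single ordering of the four blocks makes the change-of-basis matrix triangular in one pass; you need a two-step triangularization (first absorb the $\phi_B(B_1)$-component using an ordering with $A_1,C_1$ before $A_3$, then absorb the $\theta_2(A_2)$-component using an ordering with $A_1,A_3$ before $C_1,C_3$), each step unipotent, after which the argument goes through.

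The paper takes a different but closely related route: it works directly in the top exterior power $\bigwedge^{\dim B_2}B_2$ with \emph{arbitrary} bases, expanding $\bigwedge_i b_2^i$ once via the row $\mathcal{E}_B$ followed by the columns $\mathcal{E}_1,\mathcal{E}_3$, and once via the column $\mathcal{E}_2$ followed by the rows $\mathcal{E}_A,\mathcal{E}_C$, obtaining $(\nu(\mathcal{E}_B)\nu(\mathcal{E}_1)\nu(\mathcal{E}_3))^{-1}\,M\wedge N\wedge P\wedge Q$ and $(\nu(\mathcal{E}_2)\nu(\mathcal{E}_A)\nu(\mathcal{E}_C))^{-1}\,M'\wedge N'\wedge P'\wedge Q'$ respectively; it then shows the two wedge products coincide using exactly the subspace-containment facts you invoke, handling the $C_3$-block by explicitly splitting $Q-Q'$ into a $\mathrm{im}(\phi_B)$-part and a $\psi_B^{-1}\mathrm{im}(\theta_3)$-part. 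Your invariance-plus-normalization packaging is conceptually a bit cleaner and makes the role of the unipotent comparison explicit; the paper's wedge computation avoids the preliminary invariance check at the cost of carrying all six $\nu$'s through. Both proofs rest on the same three commutative squares.
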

\begin{proof}
By the definition of $\nu(\mathcal{E}_B)$, we have 
\begin{equation}\label{det_3x3_eq1}
 \wedge_{i}b_2^i = \pm \nu(\mathcal{E}_B)^{-1}   (\wedge_{i}\phi_B(b_1^i))\wedge (\wedge_{i} \psi_B^{-1}(b_3^i)).
\end{equation}
Let $M:=(\wedge_{i}\phi_B\theta_1(a_1^i))$ and $N:=(\wedge_{i} \phi_B\tau_1^{-1}(c_1^i))$. By the definition of $\nu(\mathcal{E}_1)$, 
\begin{equation}\label{det_3x3_eq2}
\wedge_{i}\phi_B(b_1^i) = \pm \nu(\mathcal{E}_1)^{-1} (\wedge_{i}\phi_B\theta_1(a_1^i))\wedge (\wedge_{i} \phi_B\tau_1^{-1}(c_1^i)) = \pm \nu(\mathcal{E}_1)^{-1} M \wedge N .
\end{equation}
Let $P:=(\wedge_{i}\psi_B^{-1}\theta_3(a_3^i))$ and $Q:=(\wedge_{i} \psi_B^{-1}\tau_3^{-1}(c_3^i))$. By the definition of
$\nu(\mathcal{E}_3)$, 
\begin{equation}\label{det_3x3_eq3}
\wedge_{i}\psi_B^{-1}(b_3^i) = \pm \nu(\mathcal{E}_3)^{-1} (\wedge_{i}\psi_B^{-1}\theta_3(a_3^i))\wedge (\wedge_{i} \psi_B^{-1}\tau_3^{-1}(c_3^i)) = \pm \nu(\mathcal{E}_3)^{-1} P \wedge Q.
\end{equation}
 Putting together ($\ref{det_3x3_eq1}$), ($\ref{det_3x3_eq2}$) and ($\ref{det_3x3_eq3}$), we deduce
\begin{equation}\label{det_3x3_eq4}
\wedge_{i}b_2^i = \pm \nu(\mathcal{E}_B)^{-1}\nu(\mathcal{E}_1)^{-1}\nu(\mathcal{E}_3)^{-1}
M\wedge N \wedge P \wedge Q.
\end{equation}
Let $M':=(\wedge_{i}\theta_2\phi_A(a_1^i))$, $N':=(\wedge_{i} \tau_2^{-1}\phi_C(c_1^i)) $, 
$P':=(\wedge_{i}\theta_2\psi_A^{-1}(a_3^i)) $,
 and $Q':=(\wedge_{i} \tau_2^{-1}\psi_C^{-1}(c_3^i))$.
By a similar argument, we have 
\begin{equation}\label{det_3x3_eq5}
\wedge_{i}b_2^i = \pm \nu(\mathcal{E}_2)^{-1}\nu(\mathcal{E}_A)^{-1}\nu(\mathcal{E}_C)^{-1}
M' \wedge N' \wedge P' \wedge Q'.
\end{equation}
From ($\ref{det_3x3_eq4}$) and ($\ref{det_3x3_eq5}$), it is enough to show
\[ M\wedge N \wedge P \wedge Q = M' \wedge N' \wedge P' \wedge Q'. \]
Indeed, we have $M=M'$ since $\phi_B\theta_1=\theta_2\phi_A$. Let $x=N-N'$. As $\phi_B\tau_1^{-1}(c_1^i)-\tau_2^{-1}\phi_C(c_1^i) \in \ker\tau_2=\mathrm{im}(\theta_2)$, we deduce $x$ is a finite sum of wedge products such that each product has a factor which is an element of $\mathrm{im}(\theta_2)$.

Similarly, let $y=P-P'$. As
$\psi_B^{-1}\theta_3(a_3^i)) - \theta_2\psi_A^{-1}(a_3^i) \in (\ker\psi_B) = (\mathrm{im}\phi_B)$
, $y$ is a finite sum of wedge products such that each product has a factor belonging to $\mathrm{im}(\phi_B)$.

Since $\tau_3\psi_B=\psi_C\tau_2$, $\psi_B^{-1}\tau_3^{-1}(c_3^i)-\tau_2^{-1}\psi_C^{-1}(c_3^i)$ is an element of $\ker(\tau_3\psi_B)$. As a vector space, $\ker \tau_3\psi_B$ is spanned by $\ker\psi_B=\mathrm{im}\phi_B$ and $\psi_B^{-1}\ker(\tau_3)=\psi_B^{-1}\mathrm{im}(\theta_3)$. 
Therefore 
\[ Q-Q'=(\wedge_{i} \psi_B^{-1}\tau_3^{-1}(c_3^i))- (\wedge_{i} \tau_2^{-1}\psi_C^{-1}(c_3^i)) =z+t
\]
where $z,t$ are finite sums such that each summand of $z$ (respectively $t$) has a factor belonging to $\mathrm{im}(\phi_B)$ (respectively $\psi_B^{-1}\mathrm{im}(\theta_3)$).

{Claim:}
$M' \wedge x \wedge P'=0$,  \quad $M \wedge N \wedge y =0$, \quad $M \wedge N \wedge z =0$, \quad $P \wedge t =0$.

{Proof of claim:}
Recall that $M'=(\wedge_{i}\theta_2\phi_A(a_1^i))$ and $P'=(\wedge_{i}\theta_2\psi_A^{-1}(a_3^i))$.
It is clear that $\{\theta_2\phi_A(a_1^i),\theta_2\psi_A^{-1}(a_3^i)\}$ span $\mathrm{im}(\theta_2)$. As
each summand of $x$ has a factor belonging to $\mathrm{im}(\theta_2)$, $M' \wedge x \wedge P'=0$.
The rest of the claim can be proved in a similar fashion. Finally,
\begin{eqnarray*}
M\wedge N \wedge P \wedge Q &=& M \wedge N \wedge P \wedge (Q'+z+t) =  M \wedge N \wedge P \wedge Q' \\
&=& M \wedge N \wedge (P'+y) \wedge Q' = M \wedge N \wedge P' \wedge Q'  \\
&=& M' \wedge (N'+x) \wedge P' \wedge Q' = M' \wedge N' \wedge P' \wedge Q'.
\end{eqnarray*}
\end{proof}
The following proposition can be deduced from Lemma $\ref{det_3x3}$ by an induction argument (whose proof we omit).
\begin{prop}\label{det_nxn}
Consider the following commutative diagram of $\mathbb{R}$-vector spaces
\begin{equation}\label{det_nxn_sq1}
\xymatrixrowsep{0.19in}\xymatrix{
          &  0  \ar[d]  &   0 \ar[d]  &   0 \ar[d] &   &   0 \ar[d] &     \\
0  \ar[r] & V_{0,0}  \ar[r]^{T_{0,0}} \ar[d]^{T'_{0,0}}  &   V_{0,1}  \ar[r]^{T_{0,1}} \ar[d]^{T'_{0,1}} &  V_{0,2}  \ar[r] \ar[d]^{T'_{0,2}} & \cdots \ar[r]  &   V_{0,n}  \ar[r] \ar[d]^{T'_{0,n}}  & 0  & (\mathcal{R}_0)\\
0  \ar[r] & V_{1,0}  \ar[r]^{T_{1,0}} \ar[d]  &   V_{1,1}  \ar[r]^{T_{1,1}} \ar[d] &  V_{1,2}  \ar[r] \ar[d] & \cdots \ar[r]  &   V_{1,n}  \ar[r] \ar[d]  & 0   & (\mathcal{R}_1)\\
        &   \vdots \ar[d]  &  \vdots  \ar[d] & \vdots \ar[d] & \ddots   &   \vdots \ar[d]  &    \\
0  \ar[r] & V_{m,0}  \ar[r]^{T_{m,0}} \ar[d]  &   V_{m,1}  \ar[r]^{T_{m,1}} \ar[d] &  V_{m,2}  \ar[r] \ar[d] & \cdots \ar[r]  &   V_{m,n}  \ar[r] \ar[d]  & 0   & (\mathcal{R}_m) \\
 &  0   &   0   &   0  &   &   0  &     \\
  &  (\mathcal{C}_0)   &   (\mathcal{C}_1)   &   (\mathcal{C}_{2})  &   &   (\mathcal{C}_n)  &  
}
\end{equation}
Let $B_{i,j}$ be an ordered basis for $V_{i,j}$. Then with respect to the bases $B_{i,j}$,
\begin{equation}\label{det_nxn_eq1}
\prod_{i=0}^{n}\nu(\mathcal{C}_i)^{(-1)^{i}} =  \prod_{i=0}^{m}\nu(\mathcal{R}_i)^{(-1)^{i}}.
\end{equation}
\end{prop}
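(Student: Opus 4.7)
The plan is to reduce the $(m+1)\times(n+1)$ case to Lemma~\ref{det_3x3} by a nested induction: an inner induction on $n$ that handles the case $m=2$ (three rows), and an outer induction on $m$ that reduces the general case to $m=2$.

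For the inner induction ($m=2$, $n\geq 2$), the base case $n=2$ is precisely Lemma~\ref{det_3x3}. For the inductive step $n\geq 3$, I would split each row $\mathcal{R}_i$ after the term $V_{i,1}$ by introducing $K_i := \mathrm{im}(T_{i,1}) = \ker(T_{i,2})$. By Remark~\ref{rmk_determinant}(1), $\nu(\mathcal{R}_i) = \nu(\mathcal{R}'_i)/\nu(\mathcal{R}''_i)$, where $\mathcal{R}'_i : 0\to V_{i,0}\to V_{i,1}\to K_i\to 0$ and $\mathcal{R}''_i : 0\to K_i\to V_{i,2}\to\cdots\to V_{i,n}\to 0$. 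The vertical maps restrict to a three-term sequence $\mathcal{C}_K : 0\to K_0\to K_1\to K_2\to 0$, which a standard diagram chase shows is exact. The original diagram then decomposes into a $3\times 3$ top block (rows $\mathcal{R}'_i$, columns $\mathcal{C}_0,\mathcal{C}_1,\mathcal{C}_K$) and a $3\times n$ bottom block (rows $\mathcal{R}''_i$, columns $\mathcal{C}_K,\mathcal{C}_2,\ldots,\mathcal{C}_n$). Applying Lemma~\ref{det_3x3} to the top and the inductive hypothesis to the bottom and combining the two identities, the $\nu(\mathcal{C}_K)$ factor cancels, and the relation $\nu(\mathcal{R}_i) = \nu(\mathcal{R}'_i)/\nu(\mathcal{R}''_i)$ assembles the row contributions into $\prod_i\nu(\mathcal{R}_i)^{(-1)^i}$, yielding the desired identity.

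For the outer induction ($m\geq 3$), I would dualize the procedure: split each column $\mathcal{C}_j$ after $V_{1,j}$ using $J_j := \mathrm{im}(T'_{1,j}) = \ker(T'_{2,j})$, obtaining $\mathcal{C}'_j : 0\to V_{0,j}\to V_{1,j}\to J_j\to 0$ and $\mathcal{C}''_j : 0\to J_j\to V_{2,j}\to\cdots\to V_{m,j}\to 0$, with $\nu(\mathcal{C}_j) = \nu(\mathcal{C}'_j)/\nu(\mathcal{C}''_j)$. The horizontal maps induce a sequence $\mathcal{R}_J$ of the $J_j$'s, which is again exact by an analogous chase. The diagram splits into a top $3\times(n+1)$ block (rows $\mathcal{R}_0,\mathcal{R}_1,\mathcal{R}_J$, columns $\mathcal{C}'_j$) and a bottom $m\times(n+1)$ block (rows $\mathcal{R}_J,\mathcal{R}_2,\ldots,\mathcal{R}_m$, columns $\mathcal{C}''_j$). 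The already-established $m=2$ case applies to the top, and the outer inductive hypothesis applies to the bottom (which has only $m$ rows). Combining the two identities, the $\nu(\mathcal{R}_J)$ factors cancel, and the identity for $m+1$ rows follows.

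The main technical obstacle is verifying that the auxiliary sequences $\mathcal{C}_K$ and $\mathcal{R}_J$ are exact. This is a routine but multi-step diagram chase in a double complex of vector spaces with exact rows and columns (essentially a snake-lemma argument), requiring one to track elements across several rows or columns of the ambient diagram before one can conclude that the induced maps on images/kernels remain surjective/injective with matching images. Once exactness is in hand and the sign bookkeeping from Remark~\ref{rmk_determinant}(1) is carried out with care, the cancellations fall out cleanly and both inductions close.
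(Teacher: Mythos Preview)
Your proposal is correct and follows exactly the approach the paper indicates: the paper states only that the proposition ``can be deduced from Lemma~\ref{det_3x3} by an induction argument (whose proof we omit),'' and your nested induction on $n$ (for $m=2$) and then on $m$ is precisely such an argument. The exactness of the auxiliary sequences $\mathcal{C}_K$ and $\mathcal{R}_J$ follows from the nine-lemma (or the long exact sequence attached to a short exact sequence of exact complexes), and the sign bookkeeping via Remark~\ref{rmk_determinant}(1) works out as you describe, so there is no gap.
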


\begin{cor}\label{det_2x3}
Consider the following commutative diagram with exact rows
\[\xymatrixrowsep{0.25in}\xymatrix{
  0 \ar[r] & A_1 \ar[d]^{\theta_1} \ar[r]^{\phi_A}  & A_2  \ar[d]^{\theta_2} \ar[r]^{\psi_A}  &   A_3 \ar[d]^{\theta_3} \ar[r] & 0 & (\mathcal{E}_A)\\
   0 \ar[r] & B_1  \ar[r]^{\phi_B} &   B_2  \ar[r]^{\psi_B}  & B_3   \ar[r] &  0 & (\mathcal{E}_B)}
 \]
Assume further that all the vertical maps are isomorphisms. Let $\{a_i,b_i\}_{i=1}^2$ be bases for $\{A_i,B_i\}_{i=1}^2$ respectively. Then with respect to these bases
\[ \frac{|\det\theta_1||\det\theta_3|}{|\det\theta_2|}=\frac{\nu(\mathcal{E}_A)}{\nu(\mathcal{E}_B)} .\]
\end{cor}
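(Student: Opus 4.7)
The plan is to realize the given $2\times 3$ diagram as a special case of the grid in Proposition \ref{det_nxn}. Since each $\theta_i$ is an isomorphism, the short sequence $0 \to A_i \xrightarrow{\theta_i} B_i \to 0$ is exact and thus qualifies as a column $(\mathcal{C}_{i-1})$ in the sense of that proposition with $m=1$. The two given horizontal exact sequences supply the rows $(\mathcal{R}_0)=(\mathcal{E}_A)$ and $(\mathcal{R}_1)=(\mathcal{E}_B)$, so we have a $2\times 3$ grid (i.e., $m=1$, $n=2$) satisfying all the hypotheses of Proposition \ref{det_nxn}.

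Next I would invoke the base case of the inductive definition of $\nu(\mathcal{E})$ for $n=1$ to identify the column determinants: for the one-map sequence $0\to A_i \xrightarrow{\theta_i} B_i \to 0$ taken with the chosen bases $a_i$ of $A_i$ and $b_i$ of $B_i$, the definition gives $\nu(\mathcal{C}_{i-1}) = |\det \theta_i|$. Meanwhile, $\nu(\mathcal{R}_0)=\nu(\mathcal{E}_A)$ and $\nu(\mathcal{R}_1)=\nu(\mathcal{E}_B)$ by construction.

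I would then simply substitute into formula \eqref{det_nxn_eq1}, which with $m=1$ and $n=2$ reads
\[
\nu(\mathcal{C}_0)\,\nu(\mathcal{C}_1)^{-1}\,\nu(\mathcal{C}_2) \;=\; \nu(\mathcal{R}_0)\,\nu(\mathcal{R}_1)^{-1}.
\]
Plugging in the identifications above yields
\[
\frac{|\det\theta_1|\,|\det\theta_3|}{|\det\theta_2|} \;=\; \frac{\nu(\mathcal{E}_A)}{\nu(\mathcal{E}_B)},
\]
which is exactly the claimed identity.

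There is no real obstacle here: the corollary is essentially just the $2\times 3$ instance of the grid identity already established. The only point one has to be careful about is the bookkeeping of the base case in the inductive definition of $\nu$, to be sure the column determinants literally equal $|\det\theta_i|$ rather than some inverse or signed variant; once this is checked, the result is immediate from Proposition \ref{det_nxn}.
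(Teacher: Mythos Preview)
Your proposal is correct and is exactly the intended argument: the paper states Corollary~\ref{det_2x3} immediately after Proposition~\ref{det_nxn} without proof, precisely because it is the $m=1$, $n=2$ instance of that proposition, with the column determinants identified as $|\det\theta_i|$ via the $n=1$ base case of the definition of $\nu$. There is nothing to add.
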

\subsection{Orders of Torsion Subgroups}
For a finitely generated abelian group $M$, we write $M_f$ for $M/M_{tor}$ and by an integral basis for $M$, we mean a $\mathbb{Z}$-basis for $M_f$. Moreover, if $f:M\to N$ is a group homomorphism then $f_{tor}:M_{tor}\to N_{tor}$.
\begin{lemma}\label{torsion_group}
Consider the following exact sequence of finitely generated abelian groups
\begin{equation}\label{hom1_seq1}
0\to A \to B \xrightarrow{\phi} C \xrightarrow{\psi} D \to E \to 0.
\end{equation}
 Assume $A$ is finite. Then the orders of the torsion subgroups are related by
 \[ \frac{[A][C_{tor}]}{[B_{tor}][D_{tor}]}=\frac{1}{[\mathrm{cok}(\psi_{tor})]}. \]
\end{lemma}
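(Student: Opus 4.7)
The plan is to break the five-term exact sequence into short exact sequences at the image of each internal map, and to track how the torsion-subgroup functor behaves on each piece, using the finiteness of $A$ to identify it with a subgroup of $B_{tor}$. Note that the tail $D\to E\to 0$ plays no role: everything is controlled by $A$, $\phi$, and $\psi$.

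Set $K=\ker\psi=\phi(B)$. The first step is the short exact sequence
\[ 0 \to A \to B \to K \to 0. \]
Since $A$ is finite, its image in $B$ lies in $B_{tor}$, and because $B_f = B/B_{tor}$ is torsion free one obtains $(B/A)_{tor}=B_{tor}/A$; explicitly, the natural surjection $B\twoheadrightarrow B/B_{tor}$ factors through $B/A$ with torsion-free target, so $K_{tor}=B_{tor}/A$. This gives the identity
\[ [B_{tor}] = [A]\,[K_{tor}]. \]

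The second step analyzes the short exact sequence $0\to K \to C \xrightarrow{\psi} \psi(C)\to 0$. Since $K$ is a subgroup of $C$, we have $K_{tor}=K\cap C_{tor}=\ker(\psi|_{C_{tor}})=\ker(\psi_{tor})$, while the image of $\psi_{tor}$ is by definition $\psi(C_{tor})\subseteq D_{tor}$. Hence we obtain a four-term exact sequence of finite abelian groups
\[ 0 \to K_{tor} \to C_{tor} \xrightarrow{\psi_{tor}} D_{tor} \to \mathrm{cok}(\psi_{tor}) \to 0, \]
and counting orders yields $[K_{tor}]\,[D_{tor}]=[C_{tor}]\,[\mathrm{cok}(\psi_{tor})]$.

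Combining the two identities gives $[B_{tor}]\,[D_{tor}]=[A]\,[C_{tor}]\,[\mathrm{cok}(\psi_{tor})]$, which rearranges to the formula in the statement. There is no serious obstacle: the only point requiring a moment's care is the identification $K_{tor}=B_{tor}/A$, which rests entirely on the finiteness of $A$ together with the fact that $B_f$ is torsion free.
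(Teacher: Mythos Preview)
Your proof is correct and follows essentially the same approach as the paper. The paper's own proof is a single sentence asserting that the result follows from the fact that when $A$ is finite the map $B_{tor}\to (B/A)_{tor}$ is surjective; your argument spells out precisely this, identifying $K_{tor}=(B/A)_{tor}=B_{tor}/A$ and then tracking orders through the four-term torsion sequence for $\psi$.
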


\begin{proof}
This is a consequence of the fact that 
if $A$ is finite then the map $B_{tor} \to (B/A)_{tor}$ is surjective.
\end{proof}

\begin{lemma}\label{det_tor_3term}
Let ($\mathcal{E}$) be an exact sequence of finitely generated abelian groups
\[ 0 \to A \xrightarrow{\phi} B \xrightarrow{\psi} C \to 0 \] 
and $(\mathcal{E})_{\mathbb{R}}$ be the sequence ($\mathcal{E}$) tensoring with $\mathbb{R}$. Then with respect to any integral bases,
\begin{equation}\label{det_tor_3term_eq1}
\nu(\mathcal{E})_{\mathbb{R}}= \frac{[A_{tor}][C_{tor}]}{[B_{tor}]} =  [\mathrm{cok}(\psi_{tor})].
\end{equation}
\end{lemma}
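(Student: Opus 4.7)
The plan is to prove the two displayed equalities separately. The algebraic identity $[A_{tor}][C_{tor}]/[B_{tor}]=[\mathrm{cok}(\psi_{tor})]$ is nearly immediate: since $\phi$ is injective so is its restriction to torsion, and the resulting exact sequence
\[ 0 \to A_{tor} \to B_{tor} \xrightarrow{\psi_{tor}} \psi(B_{tor}) \to 0 \]
gives $[\psi(B_{tor})]=[B_{tor}]/[A_{tor}]$, and hence $[\mathrm{cok}(\psi_{tor})]=[C_{tor}]/[\psi(B_{tor})]=[A_{tor}][C_{tor}]/[B_{tor}]$.

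For the main identity $\nu(\mathcal{E})_{\mathbb{R}}=[\mathrm{cok}(\psi_{tor})]$, the idea is to compute $\nu$ using a particularly clean choice of preimages. First I would verify that the induced maps on torsion-free quotients, denoted $\bar\phi : A_f \to B_f$ and $\bar\psi : B_f \to C_f$, satisfy: $\bar\phi$ is injective (if $\phi(a)\in B_{tor}$, then $na=0$ for some $n$, so $a \in A_{tor}$) and $\bar\psi$ is surjective (lifting $c\in C$ to $b\in B$, one has $\bar b \mapsto \bar c$). Because $C_f$ is free abelian, $\bar\psi$ admits an integral splitting $s : C_f \hookrightarrow B_f$, producing the internal direct sum decomposition $B_f = \ker(\bar\psi)\oplus s(C_f)$. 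Fixing integral bases $\{a_i\}$, $\{b_j\}$, $\{c_k\}$ and using $s(c_k)$ as the preimages in the definition of $\nu$, the defining identity
\[ (\wedge_i \bar\phi(a_i))\wedge(\wedge_k s(c_k)) = \pm\,\nu(\mathcal{E})_{\mathbb{R}} \cdot (\wedge_j b_j) \]
takes place inside $\bigwedge^{\mathrm{top}} B_f \cong \mathbb{Z}$. The left-hand side generates the top exterior power of the full-rank sublattice $\bar\phi(A_f)\oplus s(C_f) \subseteq B_f$, so $\nu(\mathcal{E})_{\mathbb{R}}$ equals $[B_f : \bar\phi(A_f)\oplus s(C_f)] = [\ker(\bar\psi) : \bar\phi(A_f)]$, the last equality using the splitting.

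It then remains to exhibit a natural isomorphism $\ker(\bar\psi)/\bar\phi(A_f) \simeq C_{tor}/\psi(B_{tor}) = \mathrm{cok}(\psi_{tor})$. I would define the map $\ker(\bar\psi) \to C_{tor}/\psi(B_{tor})$ sending $\bar b$ to $\psi(b) \bmod \psi(B_{tor})$; note $\psi(b) \in C_{tor}$ precisely because $\bar b \in \ker(\bar\psi)$. Well-definedness (different lifts of $\bar b$ differ by an element of $B_{tor}$) and surjectivity (any $c\in C_{tor}$ lifts to some $b\in B$ with $\bar b\in \ker(\bar\psi)$) are immediate, and the kernel is exactly $\bar\phi(A_f)$ by a short diagram chase using $\ker \psi = \mathrm{im}\,\phi$. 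The main obstacle is really just the bookkeeping in this last identification, together with confirming that $\nu(\mathcal{E})_{\mathbb{R}}$ computed with respect to integral bases is literally the lattice index; once the splitting $s$ is fixed, both reduce to elementary arguments.
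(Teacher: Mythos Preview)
Your proof is correct and follows essentially the same route as the paper's: split the surjection $\bar\psi : B_f \to C_f$, identify $\nu(\mathcal{E})_{\mathbb{R}}$ with the lattice index $[\ker\bar\psi : \bar\phi(A_f)]$, and then identify that index with $[\mathrm{cok}(\psi_{tor})]$. The only cosmetic differences are that the paper computes the index via Smith normal form (rather than the top-exterior-power interpretation) and obtains the identification $\ker\bar\psi/\bar\phi(A_f)\cong\mathrm{cok}(\psi_{tor})$ from the Snake lemma applied to $\psi$ on the short exact sequences $0\to B_{tor}\to B\to B_f\to 0$ and $0\to C_{tor}\to C\to C_f\to 0$, rather than via your explicit map.
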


\begin{proof}
From remark $\ref{rmk_determinant}$, for any section $\gamma$ of $\psi_{\mathbb{R}}$,
$ \nu(\mathcal{E})_{\mathbb{R}}=|\det\theta_{\gamma}| $ , with respect to integral bases.
As $|\det\theta_{\gamma}|$ is independent of the choice of integral bases, we only need to show that there exist a section $\gamma$ of $\psi_{\mathbb{R}}$ and integral bases of $A$, $B$ and $C$ such that ($\ref{det_tor_3term_eq1}$) holds.

Consider the following commutative diagram
\[ \xymatrixrowsep{0.2in}\xymatrix{ 0 \ar[r] & B_{tor} \ar[r] \ar[d]^{\psi_{tor}} & B \ar[d]^{\psi} \ar[r] &  B_f \ar[d]^{\psi_{f}} \ar[r] & 0 \\
                    0 \ar[r] & C_{tor} \ar[r]          & C         \ar[r] &  C_f         \ar[r] & 0 \\}
\]
The Snake Lemma yields $\mathrm{cok}(\psi_f)=0$ and 
\[ 0 \to \ker\psi_{tor} \to \ker\psi=\mathrm{im}(\phi) \to \ker\psi_{f} \to 
\mathrm{cok}\psi_{tor} \to  0 .\]
Therefore, $[\mathrm{cok}\psi_{tor}]=[\ker\psi_{f}/\mathrm{im}(\phi)]$. Since $\psi_f : B_f \to C_f$ is surjective and $C_f$ is a free abelian group, there exists a section $\gamma : C_f \to B_f$ of $\psi_f$ and we have 
$ B_f = \ker(\psi_f) \oplus \gamma(C_f)$. Take any integral basis $ \{ w_i\}_{i=1}^s$ for $C_f$. 
By the Smith Normal form, there are $\mathbb{Z}$-bases $\{u_i\}_{i=1}^r$ for $A_f$ and $\{v_i\}_{i=1}^r$ for $\ker\psi_{f}$ such that $\phi_f(u_i)=m_iv_i$ where $m_i$ is a positive integer for $i=1,..,r$. Then $\{u_1,...,u_r,w_1,...,w_s\}$ 
and $\{v_1,...,v_r,\gamma(w_1),...,\gamma(w_s)\}$ form integral bases for $A_{\mathbb{R}}\oplus C_{\mathbb{R}}$ and  $B_{\mathbb{R}}$. Moreover, $[\ker\psi_{f}/\mathrm{im}(\phi)] = \prod_{i=1}^r|m_i|$.

Let $\theta_{\gamma} : A_{\mathbb{R}}\oplus C_{\mathbb{R}} \to B_{\mathbb{R}}$ be given by
$\theta(a,c)=\psi(a)+\gamma(c)$. Then with respect to the above integral bases, $\det(\theta_{\gamma})=\prod_{i=1}^r|m_i|$.
 As a result, 
\[ \nu(\mathcal{E})_{\mathbb{R}}=|\det\theta_{\gamma}| = \prod_{i=1}^r|m_i|
= {\left[\frac{\ker\psi_{f}}{\mathrm{im}(\phi)}\right]} = {[\mathrm{cok}\psi_{tor}]}= \frac{[A_{tor}][C_{tor}]}{[B_{tor}]} .\]
\end{proof}

\begin{prop}\label{det_tor}
Let $(\mathcal{E})$ be an exact sequence of finitely generated abelian groups
\[ 0 \to A_0 \xrightarrow{} A_1 \to ... \to A_n \to 0. \]
Let $(\mathcal{E})_{\mathbb{R}}$ be the sequence $(\mathcal{E})$ tensoring with $\mathbb{R}$. Let $B_i$ be an ordered integral basis for $A_i$. Then with respect to $B_i$,
\begin{equation*}\label{det_tor_eq1}
\nu(\mathcal{E})_{\mathbb{R}} = \prod_{i=0}^{n}[(A_i)_{tor}]^{(-1)^{i}} .
\end{equation*}
\end{prop}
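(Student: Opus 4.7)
The plan is to prove the proposition by induction on $n$, exploiting the fact that $\nu(\mathcal{E})_{\mathbb{R}}$ was itself defined recursively by splitting the exact sequence, so the proof mirrors the definition. The base case $n=1$ is immediate: a two-term exact sequence $0 \to A_0 \to A_1 \to 0$ is an isomorphism of finitely generated abelian groups, so $A_0 \cong A_1$, the induced map on torsion-free quotients is represented by a matrix in $GL_r(\mathbb{Z})$ with respect to integral bases, and hence has absolute determinant $1$; on the other hand, $[A_{0,tor}]/[A_{1,tor}] = 1$. The base case $n=2$ is exactly Lemma \ref{det_tor_3term}.

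For the inductive step, I would assume the formula holds for all exact sequences of length $\leq N$ (with $N \geq 2$) and consider an exact sequence $(\mathcal{E})$ of length $N+2$,
\[ 0 \to A_0 \to A_1 \to \cdots \to A_{N+1} \to 0, \]
with maps $T_0,\ldots,T_N$. Let $I = \mathrm{im}(T_{N-1}) = \ker(T_N)$, viewed as a subgroup of $A_N$, and choose an integral basis of $I$. Split $(\mathcal{E})$ into
\[ (\mathcal{E}_1):\ 0 \to A_0 \to \cdots \to A_{N-1} \to I \to 0, \qquad (\mathcal{E}_2):\ 0 \to I \to A_N \to A_{N+1} \to 0. \]
The recursive definition of $\nu$ gives $\nu(\mathcal{E})_{\mathbb{R}} = \nu(\mathcal{E}_1)_{\mathbb{R}}\,\nu(\mathcal{E}_2)_{\mathbb{R}}^{(-1)^{N-1}}$, independently of the chosen basis for $I$.

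Applying the inductive hypothesis to $(\mathcal{E}_1)$ and the $n=2$ case to $(\mathcal{E}_2)$ yields
\[ \nu(\mathcal{E}_1)_{\mathbb{R}} = \Bigl(\prod_{i=0}^{N-1}[A_{i,tor}]^{(-1)^i}\Bigr)\cdot [I_{tor}]^{(-1)^N}, \qquad \nu(\mathcal{E}_2)_{\mathbb{R}} = \frac{[I_{tor}]\,[A_{N+1,tor}]}{[A_{N,tor}]}. \]
Combining, the $[I_{tor}]$ factor cancels because $(-1)^N + (-1)^{N-1}=0$, and since $(-1)^{N-1} = (-1)^{N+1}$ one obtains
\[ \nu(\mathcal{E})_{\mathbb{R}} = \prod_{i=0}^{N+1}[A_{i,tor}]^{(-1)^i}, \]
completing the induction.

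There is no real obstacle here beyond careful bookkeeping of signs: the substance of the argument is entirely contained in Lemma \ref{det_tor_3term}, and the multiplicative decomposition of $\nu$ along splittings (Remark \ref{rmk_determinant}) guarantees that the recursion matches the formula. The only mild subtlety is ensuring that the auxiliary basis on $I$ does not affect either $\nu(\mathcal{E}_1)_{\mathbb{R}}$ or $\nu(\mathcal{E}_2)_{\mathbb{R}}$ — which is exactly the well-definedness built into the construction of $\nu$ in the appendix.
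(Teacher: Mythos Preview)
Your proof is correct and follows exactly the approach the paper indicates: induction on $n$ with base case $n=2$ given by Lemma~\ref{det_tor_3term}, and the inductive step carried out via the recursive splitting that defines $\nu$. You have simply spelled out the details that the paper's one-line proof omits, including the sign bookkeeping showing that the $[I_{tor}]$ contribution cancels.
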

\begin{proof}
The proof uses induction on $n$. The base case when $n=2$ is Lemma $\ref{det_tor_3term}$.
\end{proof}
\begin{cor}\label{det_tor_5term}
Suppose we have an exact sequence of finitely generated abelian groups
\[ 0 \to A \to B \xrightarrow{\phi} C \xrightarrow{\psi} D \to E \to 0 \] 
where $A$ and $E$ are finite groups. Then with respect to integral bases,
\[ \nu([ 0 \to B_{\mathbb{R}} \xrightarrow{\phi} C_{\mathbb{R}} \xrightarrow{\psi} D_{\mathbb{R}} \to 0])
= \frac{[B_{tor}][D_{tor}]}{[A][C_{tor}][E]} = \frac{[\mathrm{cok}(\psi_{tor})]}{[\mathrm{cok}\psi]}. \]
\end{cor}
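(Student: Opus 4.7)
The strategy is to decompose the five-term sequence via images, reduce to Lemma~\ref{det_tor_3term} applied to the middle short exact sequence, and then perform a change-of-basis to compare integral bases of $B, D$ with those of the intermediate images. Set $B' := \mathrm{im}(\phi)$ and $D' := \mathrm{im}(\psi) = \ker(D\to E)$, so that the five-term sequence splits as
\[ 0\to A\to B\xrightarrow{\bar\phi} B'\to 0,\quad 0\to B'\to C\xrightarrow{\bar\psi} D'\to 0,\quad 0\to D'\xrightarrow{\iota} D\to E\to 0. \]
Since $A$ and $E$ are finite, after $\otimes\mathbb{R}$ the outer sequences collapse to isomorphisms $\bar\phi_{\mathbb{R}}:B_{\mathbb{R}}\xrightarrow{\simeq}B'_{\mathbb{R}}$ and $\iota_{\mathbb{R}}:D'_{\mathbb{R}}\xrightarrow{\simeq}D_{\mathbb{R}}$, while the middle one gives a short exact sequence $(\mathcal{E}'):=[0\to B'_{\mathbb{R}}\to C_{\mathbb{R}}\to D'_{\mathbb{R}}\to 0]$ to which Lemma~\ref{det_tor_3term} applies: $\nu(\mathcal{E}') = [B'_{tor}][D'_{tor}]/[C_{tor}]$ with respect to any integral bases of $B', C, D'$.

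Next I would record two auxiliary identities. Since $A\subseteq B_{tor}$, $\bar\phi$ induces an isomorphism $B_{tor}/A\xrightarrow{\simeq} B'_{tor}$, so $[B'_{tor}] = [B_{tor}]/[A]$, and $\bar\phi_f:B_f\xrightarrow{\simeq} B'_f$ is an isomorphism of free abelian groups, hence $|\det\bar\phi_{\mathbb{R}}|=1$ in integral bases. For $\iota$, a snake-lemma argument on $0\to D'\to D\to E\to 0$ yields the exact sequence $0\to D'_{tor}\to D_{tor}\to E\to D_f/D'_f\to 0$; writing $F$ for the image of $D_{tor}\to E$, this gives $[D'_{tor}] = [D_{tor}]/[F]$ and $[D_f:D'_f] = [E]/[F]$, so $|\det\iota_{\mathbb{R}}^{-1}| = [F]/[E]$ in integral bases of $D$ and $D'$. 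Now I would apply Corollary~\ref{det_2x3} to the commutative diagram with top row $(\mathcal{E}):=[0\to B_{\mathbb{R}}\xrightarrow{\phi} C_{\mathbb{R}}\xrightarrow{\psi} D_{\mathbb{R}}\to 0]$, bottom row $(\mathcal{E}')$, and vertical maps $\bar\phi_{\mathbb{R}}$, $\mathrm{id}$, $\iota_{\mathbb{R}}^{-1}$ (which commute because $\phi$ factors through $B'$ and $\psi$ factors through $D'$), getting $\nu(\mathcal{E})/\nu(\mathcal{E}') = 1\cdot ([F]/[E])/1 = [F]/[E]$. Substituting, the factor $[F]$ cancels and one obtains the first equality $\nu(\mathcal{E}) = [B_{tor}][D_{tor}]/([A][C_{tor}][E])$.

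For the second equality, Lemma~\ref{torsion_group} applied directly to the five-term sequence (whose hypothesis that $A$ is finite is satisfied) gives $[\mathrm{cok}(\psi_{tor})] = [B_{tor}][D_{tor}]/([A][C_{tor}])$; combined with $[\mathrm{cok}\,\psi] = [D/D'] = [E]$ this immediately yields the desired $\nu(\mathcal{E}) = [\mathrm{cok}(\psi_{tor})]/[\mathrm{cok}\,\psi]$. The main obstacle is the careful tracking of the auxiliary integer $F$: it appears both in $[D'_{tor}] = [D_{tor}]/[F]$ and in the change-of-basis factor $[D_f:D'_f] = [E]/[F]$, and these two occurrences must cancel exactly so that $F$, which does not appear in the statement, drops out of the final formula. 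Apart from this bookkeeping, the proof is a direct combination of the previously established Lemma~\ref{det_tor_3term}, Corollary~\ref{det_2x3}, and Lemma~\ref{torsion_group}.
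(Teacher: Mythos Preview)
Your argument is correct. The paper, however, takes a shorter route: since the corollary sits immediately after Proposition~\ref{det_tor}, the intended proof is to apply that proposition directly to the full five-term sequence $0\to A\to B\to C\to D\to E\to 0$, obtaining $\nu(\text{5-term})_{\mathbb{R}}=[A][C_{tor}][E]/([B_{tor}][D_{tor}])$; then, since $A_{\mathbb{R}}=E_{\mathbb{R}}=0$, the three-term real sequence is obtained from the five-term one by stripping a zero from each end, and one checks from the inductive definition that removing a leading zero inverts $\nu$ while removing a trailing zero leaves it unchanged. Both approaches finish the second equality via Lemma~\ref{torsion_group} in the same way. Your route bypasses the general $n$-term Proposition~\ref{det_tor} entirely, working instead from the three-term Lemma~\ref{det_tor_3term} together with the change-of-basis Corollary~\ref{det_2x3}; this is a bit longer and requires the bookkeeping with the auxiliary order $[F]$ (which you handle correctly), but it is more self-contained and makes transparent exactly how the contributions $[A]$ and $[E]$ enter through the two outer short exact sequences.
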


%\bibliographystyle{amsalpha}
%\bibliography{ref}

\bigskip
\begin{align*}
& \large \mbox{Department of Mathematics, University of Regensburg, 93040 Regensburg, Germany.} \\
& \large \mbox{Email : minh-hoang.tran@mathematik.uni-regensburg.de}
\end{align*}

	\end{document}